\documentclass[]{amsart}

\usepackage{amsmath}
\usepackage{amsthm}
\usepackage{amsfonts}
\usepackage{amssymb}
\usepackage[shortlabels]{enumitem}
\usepackage{todonotes}
\usepackage{float}
\usepackage{mathtools}
\usepackage{csquotes}
\usepackage{changes}
\usepackage{comment}
\usepackage{tikz-cd}
\tikzcdset{every label/.append style = {font = \normalsize}}

\usepackage[style=alphabetic,sorting=nyt,backend=bibtex8, maxbibnames=99]{biblatex}
\bibliography{reference.bib}

\usepackage{hyperref}
\hypersetup{colorlinks=true, urlcolor=black}

\usepackage{graphicx}

\makeatletter
\newcommand{\oast}{\mathbin{\mathpalette\make@circled\ast}}
\newcommand{\make@circled}[2]{%
  \ooalign{$\m@th#1\smallbigcirc{#1}$\cr\hidewidth$\m@th#1#2$\hidewidth\cr}%
}
\newcommand{\smallbigcirc}[1]{%
  \vcenter{\hbox{\scalebox{0.77778}{$\m@th#1\bigcirc$}}}%
}
\makeatother

\newtheorem{theorem}{Theorem}[section]
\newtheorem{maintheorem}{Theorem}

\newtheorem{maincorollary}{Corollary}

\newtheorem*{theorem*}{Theorem}
\newtheorem{proposition}[theorem]{Proposition}
\newtheorem*{Dproblem}{Ditor's Problem}
\newtheorem{corollary}[theorem]{Corollary}
\newtheorem{lemma}[theorem]{Lemma}
\newtheorem{claim}{Claim}[theorem]
\newtheorem*{subclaim*}{Subclaim}
\theoremstyle{definition}

\newtheorem{question}{Question}
\newtheorem*{question*}{Question}
\newtheorem{definition}[theorem]{Definition}

\theoremstyle{remark}
\newtheorem{remark}[theorem]{Remark}

\begin{document}

\title{On Maximal Ladders}

\author{Lorenzo Notaro}
\address{University of Vienna, Institute of Mathematics, Kurt G\"{o}del Research Center, Kolingasse 14-16, 1090 Vienna, Austria}
\curraddr{}
\email{lorenzo.notaro@univie.ac.at}

\begin{abstract}
Given a positive integer $n$, an $n$-ladder is a lower finite lattice whose elements have at most $n$ lower covers. In 1984, Ditor proved that every $n$-ladder has cardinality at most $\aleph_{n-1}$ and asked whether this bound is sharp, i.e., whether for each $n$ there is an $n$-ladder of cardinality $\aleph_{n-1}$. We isolate the notion of maximal $n$-ladder and use it to study Ditor's problem and related questions. We show that $\text{Add}(\omega, \omega_\omega)$ forces every maximal $n$-ladder to have cardinality $\aleph_{n-1}$, and hence forces a positive answer to Ditor's question for every $n$. In particular, it is consistent that there are no maximal $3$-ladders of cardinality $\aleph_1$. However, we show that the existence of such a ladder follows from $\mathfrak{d}=\aleph_1$. Under $\clubsuit$, we construct a maximal $3$-ladder of breadth $2$. Finally, we prove that, consistently (under $\diamondsuit$), there exists a maximal $3$-ladder that is destructible by forcing with a Suslin tree.
\end{abstract}

\thanks{This research was funded in whole or in part by the Austrian Science Fund (FWF) \href{https://www.fwf.ac.at/en/research-radar/10.55776/ESP1829225}{10.55776/ESP1829225}. For open access purposes, the author has applied a CC BY public copyright license to any author accepted manuscript version arising from this submission.}

\subjclass[2020]{Primary 03E05, Secondary 03E35, 06A07}
\keywords{lattice, lower cover, ladders, Ditor's problem, maximal ladders}
\maketitle

{
  \hypersetup{linkcolor=black}
  \tableofcontents
}

\addtocontents{toc}{\protect\setcounter{tocdepth}{1}}
\section{Introduction}

 Given a positive integer $n$, an \emph{$n$-ladder} is a lower finite lattice whose elements have at most $n$ lower covers. The notion of an $n$-ladder was introduced independently by Ditor~\cite{MR0732199} and Dobbertin~\cite{MR862871} under different names\footnote{Ditor called these lattices \emph{$n$-lattices} while Dobbertin called them \emph{$n$-frames}. We follow the terminology of Gr\"{a}tzer, Lakser, and Wehrung \cite{MR1768850}.}. All the results of Ditor we cite are from~\cite{MR0732199}.
 
Ditor showed that every $n$-ladder  has cardinality at most $\aleph_{n-1}$ (for a more general result, see Theorem~\ref{thm:Ditor}). He then asked whether this cardinality bound is sharp (see also \cite[p. 291]{MR2768581}):

\begin{Dproblem}
For every $n > 0$, is there an $n$-ladder of cardinality $\aleph_{n-1}$?
\end{Dproblem}

The case $n=1$ is trivial: $\omega$ with its usual ordering is a $1$-ladder of cardinality $\aleph_0$. Ditor proved that the answer to his question is also positive when $n=2$, i.e., there exists a $2$-ladder of cardinality $\aleph_1$. Since then, $2$-ladders have been used primarily, but not exclusively, in representation problems in universal algebra for structures of cardinality $\leq \aleph_1$ (e.g. \cite{MR862871,MR1768850, MR1800815, MR2309879}).

Recent progress has been made on the remaining cases of Ditor’s Problem. Wehrung \cite{MR2609217} proved that the existence of a $3$-ladder of cardinality $\aleph_2$ follows from either of two independent set-theoretic assumptions: $\mathsf{MA}_{\omega_1}(\aleph_1\text{-precaliber})$, that is, $\mathsf{MA}_{\omega_1}$ restricted to forcings of precaliber $\aleph_1$; and the existence of an $(\omega_1, 1)$-morass. 

Then, the author \cite{MR4993406} proved that for every $n > 2$ the existence of an $n$-ladder of cardinality $\aleph_{n-1}$ follows from $\square_{\omega_1} + \square_{\omega_2} +\ldots + \square_{\omega_{n-2}}$, i.e., from Jensen's $\square_\kappa$ holding at the first $n-2$ uncountable cardinals. In particular, $\square_{\omega_1}$ implies the existence of a $3$-ladder of cardinality $\aleph_2$. Moreover, since the axiom of constructibility $\mathsf{V=L}$ implies $\square_\kappa$ for every uncountable cardinal $\kappa$, we conclude from the author's result that Ditor's problem has a positive solution for every $n$ under $\mathsf{V=L}$.

In this paper we introduce the notion of \emph{maximal} $n$-ladder. An $n$-ladder is maximal if it is not isomorphic to a proper ideal of an $n$-ladder (Definition~\ref{def:maximal}). This notion stems naturally from the work of Ditor and Wehrung \cite{MR0732199, MR2609217}. Although the notion of maximal $n$-ladders has not previously been made explicit, several existing results can already be phrased in those terms. In particular, Ditor's results already imply that every $n$-ladder of cardinality $\aleph_{n-1}$ is maximal (Corollary~\ref{cor:ditornomax}) and that every maximal $n$-ladder, with $n > 1$, is uncountable (Theorem~\ref{thm:Ditormax}). 

Using the notion of maximality, we can also say something more precise about Wehrung's results. In fact, Wehrung not only proved that $\mathsf{MA}_{\omega_1}(\aleph_1\text{-precaliber})$ implies the existence of a $3$-ladder of cardinality $\aleph_2$, but, more generally, that every maximal $n$-ladder, with $n > 2$, has cardinality at least $\aleph_2$ (cf. Theorem~\ref{thm:Ditormax}).

After providing a characterization of maximality (Theorem~\ref{thm:characterization}), we prove our main results. The first builds on Wehrung’s forcing argument and yields, as a corollary, a new proof of the consistency of a positive answer to Ditor’s Problem for every $n$.  Here $\text{\upshape Add}(\omega, \omega_{n})$ is Cohen's forcing for adding $\aleph_{n}$ Cohen reals.

\begin{maintheorem}\label{thm:main1}
For every $m > 1$, $\text{\upshape Add}(\omega, \omega_{m})$ forces that every maximal $n$-ladder has cardinality at least $\aleph_{\min\{n-1, m\}}$. 
\end{maintheorem}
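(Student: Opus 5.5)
We will prove that in $V[G]$, where $G$ is $\text{Add}(\omega,\omega_m)$-generic, every $n$-ladder $L$ of cardinality $<\aleph_{\min\{n-1,m\}}$ is not maximal. Concretely, we will embed $L$ as a proper ideal of some $n$-ladder by adjoining a new element. We use the characterization of maximality (Theorem~\ref{thm:characterization}), which we expect to reduce non-maximality to a combinatorial condition on $L$. Roughly, the condition is that $L$ extends to a larger $n$-ladder in which $L$ is a proper ideal. Equivalently, there is a suitable ``generic'' cofinal structure in $L$: a chain or filter-like object along which a new top element can be added while every new element keeps at most $n$ lower covers.

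The argument is by induction on $n$, and the case $n\le 2$ is handled by Theorem~\ref{thm:Ditormax}, since maximal $n$-ladders with $n>1$ are uncountable. So fix $n\ge 3$ and $k=\min\{n-1,m\}$, and let $L$ be an $n$-ladder in $V[G]$ with $|L|=\aleph_j$, $j<k$.

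The first step is a reflection argument. By the $\aleph_1$-c.c.\ and a nice-name argument, $L$ lies in $V[G\restriction A]$ for some $A\subseteq\omega_m$ with $|A|=\aleph_j<\aleph_m$. Then $V[G]=V[G\restriction A][H]$, where $H$ is generic for $\text{Add}(\omega,\omega_m)$ over the intermediate model. In particular, over a model containing $L$ we still have $\aleph_{j+1}\le\aleph_m$ many fresh Cohen reals. Here we use $j<m$.

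The second step follows Wehrung's argument, which shows that $\mathsf{MA}_{\omega_1}(\aleph_1\text{-precaliber})$ kills small maximal $3$-ladders. We define a poset $\mathbb{P}$ of finite approximations to an extension $L\cup\{\text{new elements}\}$, with new elements placed above $L$ and meets and joins computed so that lower covers stay bounded by $n$. In Wehrung's setting one uses a ccc (precaliber) forcing. Here we instead show that adding $\aleph_{j+1}$ Cohen reals already produces the required extension.

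Pass to a model containing $L$ in which $|L|=\aleph_j$. Because $|L|<\aleph_{n-1}$, Ditor's counting argument (Theorem~\ref{thm:Ditor}) applied to ideals of $L$ provides room. Each ideal generated by finitely many elements is an $n$-ladder of size $<\aleph_{n-1}$, and the $(n-1)$-ladder structure of lower parts together with the inductive hypothesis lets us find a new element above $L$ whose lower covers number at most $n$. We need fewer than $\aleph_{j+1}$ dense sets, so a Cohen generic of length $\omega_{j+1}$ suffices. Using Cohen reals to diagonalize, we build a cofinal sequence in $L$ of length $\mathrm{cf}$ (at most $\aleph_j$) such that the new top element's lower covers are controlled.

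The main obstacle is this second step. We must show that the finite-approximation poset is forcing-equivalent to, or absorbed by, Cohen forcing, or else directly construct the extension from Cohen reals. The difficulty is that $\text{Add}(\omega,\omega_m)$ gives genericity only over sets in intermediate models, not full $\mathsf{MA}$. First we will try to verify that Wehrung's poset is countable (for countable $L$) and hence equivalent to Cohen forcing. For larger $L$, we will decompose $L$ as a continuous increasing union of small sub-ladders $L_\alpha$, each in an intermediate model, and add the extension stage by stage with one Cohen real per stage. At limits we will use the hypothesis $j<n-1$, via Ditor-type cardinality bounds for $(n-1)$-ladders at the levels, to keep lower covers bounded.
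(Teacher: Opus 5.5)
Your reflection step is correct and matches the paper's first step: $L\in V[G\upharpoonright A]$ with $|A|\le\aleph_j$, and the remaining coordinates are $\text{Add}(\omega,\omega_m)$-generic over that model. There is, however, a genuine gap. The second step is the whole content of the theorem, and you do not supply it. The outline you give for it also aims at the wrong object.

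Theorem~\ref{thm:characterization} says that an $n$-ladder is non-maximal iff it has a cofinal meet-subsemilattice which is an $(n-1)$-ladder. The extension is then obtained by adjoining a disjoint copy of that subsemilattice. It cannot come from adjoining ``a new element'' or ``a new top element'' above $L$, because for infinite $L$ such an element has infinitely many predecessors. Nor can it be controlled by ``a cofinal sequence in $L$ of length $\mathrm{cf}$''. In a lower finite poset every chain has order type at most $\omega$. A countable subset of an uncountable lower finite poset has countable downward closure, so an uncountable $L$ has no cofinal chain at all; this is exactly why maximal $2$-ladders are uncountable.

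Several supporting steps are also empty or invalid:
\begin{itemize}
\item An ideal generated by finitely many elements of a lattice is principal, hence finite, so Ditor's bound gives no ``room'' there.
\item ``Fewer than $\aleph_{j+1}$ dense sets, so a Cohen generic suffices'' is not a valid inference. $\text{Add}(\omega,\kappa)$ does not meet dense subsets of an unrelated poset; the generic object must be defined directly from the Cohen generic, or via a projection. For $j=1$ your absorption idea is in fact realized: see Remark~\ref{rmk:Sk}.
\end{itemize}

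The missing key lemma is Proposition~\ref{prop:wehrung}: if $|L|=\aleph_j$ with $j\ge 1$, then $\text{Add}(\omega,\omega_j)$ forces a cofinal meet-subsemilattice of $L$ which is a $(j+1)$-ladder. Since $j<n-1$, this is an $(n-1)$-ladder, and Theorem~\ref{thm:characterization} finishes. The bound $j+1$ comes from the cardinality of $L$, not from $n$. So induction on $n$ and Ditor-type bounds play no role, and the hypothesis $j<n-1$ is used only in that final comparison.

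The paper's proof of the lemma runs as follows.
\begin{itemize}
\item It builds a tree-indexed, coherent filtration $\langle I_{\vec\alpha}\rangle$ of $L$ by ideals. The indices are sequences of length $\le j$ with $\alpha_i<\omega_{j-i}$; the filtration is continuous at limits, has $|I_{\vec\alpha}|=\aleph_{j-|\vec\alpha|}$, and is arranged so that $x\le y$ implies $I_{\vec\alpha(x)}\subseteq I_{\vec\alpha(y)}$.
\item The filtration partitions $L$ into countable pieces $J_{\vec\alpha}$.
\item A Cohen condition picks finitely many elements of these pieces. The set $C_p$ is defined by lexicographic recursion so that $C_G\cap J_{\vec\alpha}$ is a chain and $C_G$ is closed under the projections $\pi_{\vec\gamma}$.
\item Every lower cover of $x\in C_G$ then lies among its predecessor in its own chain and the at most $j$ projections $\pi_{(\vec\alpha\upharpoonright i)^\smallfrown(\alpha_i-1)}(x)$. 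Cofinality of $C_G$ is a density argument.
\end{itemize}

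Your ``continuous union of small sub-ladders, one Cohen real per stage'' is roughly the case $j=1$. For $j\ge 2$ the pieces $I_{\alpha+1}\setminus I_\alpha$ have size $\aleph_{j-1}$. They must themselves be decomposed, coherently with the projections onto earlier ideals. That is the nontrivial combinatorics in the paper's Claim (property (6) and the closure under the sets $R_m$), and your outline does not address it.
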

\begin{maincorollary}\label{cor:main1}
$\text{\upshape Add}(\omega, \omega_\omega)$ forces that every maximal $n$-ladder has cardinality $\aleph_{n-1}$ for every $n> 0$.
\end{maincorollary}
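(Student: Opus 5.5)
The corollary should follow from Theorem~\ref{thm:main1} together with Ditor's upper bound (Theorem~\ref{thm:Ditor}). The point is that forcing with $\text{\upshape Add}(\omega,\omega_\omega)$ can be factored so that, for each fixed $n$, we reduce to $\text{\upshape Add}(\omega,\omega_m)$ with $m \geq n-1$. We then get the lower bound from Theorem~\ref{thm:main1} and the upper bound from Ditor.

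\emph{Small values of $n$.} For $n=1$, every $1$-ladder is a lower finite chain, hence countable, so the statement holds trivially. It also holds for maximal ones: $\omega$ is the unique maximal $1$-ladder up to isomorphism, since every finite chain is a proper initial segment of $\omega$. For $n=2$, Theorem~\ref{thm:Ditormax} gives that every maximal $2$-ladder is uncountable, and Ditor's bound gives cardinality at most $\aleph_1$. Both arguments are absolute enough to hold in any model, in particular in the extension.

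\emph{General $n \geq 3$.} Let $G$ be $\text{\upshape Add}(\omega,\omega_\omega)$-generic and fix $n$. Set $m = n \geq 2$ (any $m \geq n-1$ with $m>1$ works). We use the product decomposition $\text{\upshape Add}(\omega,\omega_\omega) \cong \text{\upshape Add}(\omega,\omega_m) \times \text{\upshape Add}(\omega,\omega_\omega)$. The key observation is that $\text{\upshape Add}(\omega,\omega_\omega)$ is isomorphic to $\text{\upshape Add}(\omega,\omega_m)$ as computed in a suitable sense, and this needs care. The cleanest route is to note that in the extension $2^{\aleph_0}=\aleph_{\omega+1}$, and cardinals are preserved by ccc, so $\aleph_k^{V[G]}=\aleph_k^V$.

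It is better to look inside the proof of Theorem~\ref{thm:main1} rather than apply it as a black box. There, presumably, a maximal $n$-ladder $L$ of size $\aleph_k$ with $k<\min\{n-1,m\}$ is refuted by using that $L$ lies in an intermediate extension by $\aleph_k$ many coordinates. The remaining $\geq\aleph_{k+1}$ Cohen reals are then used to build an $n$-ladder properly extending $L$ as an ideal, via a Wehrung-style argument. This argument only needs at least $\aleph_{k+1}$ Cohen reals added generically over the model containing $L$.

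With $\omega_\omega$ coordinates, this is available for every $k<\omega$. Concretely, $L$ has size $\aleph_k$ and is coded by a nice name involving only $\aleph_k$ coordinates $A$. The rest, $\text{\upshape Add}(\omega,\omega_\omega\setminus A)$, is isomorphic to $\text{\upshape Add}(\omega,\omega_\omega)$ and contains $\text{\upshape Add}(\omega,\omega_{k+1})$ as a factor. Hence the same argument shows that $L$ is not maximal whenever $k<n-1$. So every maximal $n$-ladder has size at least $\aleph_{n-1}$, and Ditor's theorem gives equality.

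\emph{Main obstacle.} The main obstacle is justifying the transfer from $\text{\upshape Add}(\omega,\omega_m)$ to $\text{\upshape Add}(\omega,\omega_\omega)$. Either Theorem~\ref{thm:main1} is stated or proved in a form that only uses ``$\aleph_{k+1}$ Cohen reals over a model containing $L$''. Or we argue directly: suppose some condition forces a maximal $n$-ladder $\dot L$ of size $\aleph_k$ with $k<n-1$. Then $\dot L$ is an $\text{\upshape Add}(\omega,A)$-name with $|A|=\aleph_k$. Choose $B\supseteq A$ with $|B|=\aleph_{n}$, and note that $\text{\upshape Add}(\omega,B)\cong\text{\upshape Add}(\omega,\omega_n)$. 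By Theorem~\ref{thm:main1} with $m=n$, $L$ is non-maximal in $V[G\restriction B]$, witnessed by an $n$-ladder $M$ with $L$ isomorphic to a proper ideal of $M$. Being an $n$-ladder and being an ideal are upward absolute (lattice operations, lower finiteness and lower covers are all computed locally). So the witness persists to $V[G]$, and $L$ is not maximal there, which is a contradiction.
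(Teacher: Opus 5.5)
Your proof is correct. The argument you actually complete in your last paragraph is organized differently from the paper's.

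\textbf{The paper's route.} The paper reruns the proof of Theorem~\ref{thm:wehrung} with $\omega_\omega$ in place of $\omega_m$. For $L\in V[G]$ of size $\aleph_k$ with $k<n-1$:
\begin{enumerate}
\item it finds $X\subseteq\omega_\omega$ with $|X|\le\aleph_k$ and $L\in V[G\upharpoonright X]$;
\item it notes that the tail $\text{Add}(\omega,\omega_\omega\setminus X)$ has $\text{Add}(\omega,\omega_k)$ as a complete subforcing;
\item Proposition~\ref{prop:wehrung} then gives a cofinal meet-subsemilattice of $L$ that is a $(k+1)$-ladder, hence an $(n-1)$-ladder;
\item Theorem~\ref{thm:characterization} finishes.
\end{enumerate}

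\textbf{Your route.} You apply Theorem~\ref{thm:main1} as a black box in the intermediate model $V[G\upharpoonright B]$ with $|B|=\aleph_n$. You then push non-maximality up to $V[G]$, using that the witnessing ladder $M$ and the embedding of $L$ as a proper ideal of $M$ persist.

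\textbf{Comparison.} Your version is more modular: it needs only the statement of Theorem~\ref{thm:main1} plus upward absoluteness of non-maximality. The paper's version shows what is really used. Already $\aleph_k$ Cohen reals over a model containing $L$ suffice, not the $\aleph_{k+1}$ you guessed. This is the form invoked later in Section~\ref{sec:diamond}, where maximal $n$-ladders of size $\aleph_k$ with $n>k+1$ are shown to be $\text{Add}(\omega,\omega_k)$-destructible.

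\textbf{Points to tighten.} State explicitly that $L$ is still an $n$-ladder of size $\aleph_k$ in $V[G\upharpoonright B]$; this follows from absoluteness and preservation of cardinals by ccc forcing. Also state that you first replace $L$ by an isomorphic copy on $\omega_k$, so that a nice name over $\aleph_k$ coordinates exists.

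\textbf{Remarks to delete.} Drop the exploratory remarks in the general case; neither is used.
\begin{itemize}
\item The claim that $\text{Add}(\omega,\omega_\omega)$ is isomorphic to $\text{Add}(\omega,\omega_m)$ ``in a suitable sense'' is unfounded.
\item The claim that $2^{\aleph_0}=\aleph_{\omega+1}$ holds in the extension is not a theorem of $\mathsf{ZFC}$. It equals $(\aleph_\omega^{\aleph_0})^V$.
\end{itemize}
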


In particular, since every $n$-ladder extends to a maximal $n$-ladder, adding $\aleph_\omega$ many Cohen reals forces Ditor's Problem to have a positive answer for every $n > 0$.  

Theorem~\ref{thm:main1} and the above-mentioned result of Wehrung show that, consistently, every maximal $3$-ladder has cardinality $\aleph_2$, or, equivalently, there are no maximal $3$-ladders of cardinality $\aleph_1$. The natural question, asked implicitly by Wehrung \cite[p. 7]{MR2609217}, is whether the existence of a maximal $3$-ladder of cardinality $\aleph_1$ is consistent. Our second result answers this question in the positive.

\begin{maintheorem}\label{thm:main2}
If $\mathfrak{d}=\aleph_1$, then there exists a maximal $3$-ladder of cardinality $\aleph_1$.
\end{maintheorem}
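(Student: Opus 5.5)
The plan is to build $L$ as an increasing continuous union $L=\bigcup_{\alpha<\omega_1}L_\alpha$ of countable $3$-ladders, with each $L_\alpha$ a proper ideal of $L_{\beta}$ for $\alpha<\beta$. Maximality will come from the characterization in Theorem~\ref{thm:characterization}. I expect that characterization to say roughly the following: $L$ fails to be maximal exactly when $L$ carries an extra \enquote{layer} that can be adjoined above it. Concretely, this should mean an order- and join-preserving map $y\mapsto x\vee y$ from $L$ into a lattice in which every element has at most $n-1$ lower covers besides its cover coming from $L$. So for $n=3$, a witness to non-maximality is a \enquote{$2$-ladder-like shadow} of $L$ together with compatible finite data at each point.

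The first step is to reduce such a witness on a countable ideal $L_\alpha$ to a real. After fixing an enumeration of $L_\alpha$ in order type $\omega$, the witness is determined by countably many finite choices: which element $x\vee y$ lies over $y$, and which of its at most two remaining lower covers it uses. I will code this as a function $g\colon\omega\to\omega$. The key point to verify is a compactness feature of lower finite lattices. If a candidate extension of $L_\alpha$ extends through $L_{\alpha+1}$, then the positions of the new lower covers are bounded by some function of $n$. Moreover, any function dominating that bound lets me locate, at the successor step, an element of $L_{\alpha+1}\setminus L_\alpha$ that blocks every witness bounded by that function. The blocking element is a join that would need a fourth lower cover.

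The second step is the construction itself, using $\mathfrak d=\aleph_1$. Fix a dominating family $\{f_\alpha:\alpha<\omega_1\}$. At stage $\alpha+1$, build $L_{\alpha+1}\supseteq L_\alpha$ as a countable $3$-ladder in which $L_\alpha$ is an ideal. The construction should guarantee that every candidate extension-witness on $L_\alpha$ coded by a function $\le^* f_\alpha$, with finite modification, fails to extend to $L_{\alpha+1}$. There are only countably many finite modifications of $f_\alpha$, so I handle them by a bookkeeping of order type $\omega$. Each is killed by adjoining, above some $y\in L_\alpha$, an element whose join with the putative new element would acquire too many lower covers. Limit stages are unions. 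At the end, any witness of non-maximality for $L$ restricts to a witness on club many $L_\alpha$. Its code is dominated by some $f_\beta$, and after thinning by a closure argument it is dominated at some stage $\alpha\ge\beta$ where it should have been killed, giving a contradiction. For the uniformization I plan to index the scale so that each $f_\beta$ is reconsidered at all later stages via the enumerations of $L_\alpha$.

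The main obstacle is the successor step: performing the killing while keeping $L_{\alpha+1}$ a $3$-ladder with $L_\alpha$ an ideal. Each new element must have at most three lower covers, all joins must still exist, and the new elements must not accidentally create room for a different extension of $L_{\alpha+1}$. My first attempt is to adjoin, for each requirement, a copy of a finite \enquote{grid} $\{0,1,2\}\times\{0,\dots,k\}$ glued along a chain of $L_\alpha$. Here $k$ is chosen from $f_\alpha$, and it is large enough that any bounded witness would force a point of the grid to have four lower covers. I would then take joins freely, as in Ditor's construction of a $2$-ladder of size $\aleph_1$, and verify lower finiteness and the three-cover bound directly.
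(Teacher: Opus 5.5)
Your outline has a genuine gap. It is a diagonalization template, and the two places where all the work lies are left unspecified or sketched in a way that cannot work: what it means to ``kill'' a witness, and how to build a new layer that does so while remaining a $3$-ladder.

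With the actual form of Theorem~\ref{thm:characterization} (a cofinal meet-subsemilattice $C$ that is a $2$-ladder), a kill has a precise meaning. If $C\cap L_\gamma$ is cofinal in $L_\gamma$ and $c\in C$, pick $x\in C\cap L_\gamma$ above $\pi_{L_\gamma}(c)$; then $c\wedge x=\pi_{L_\gamma}(c)\in C$ by Lemma~\ref{lemma:meetproj}. So $C$ is refuted by a later element $e$ all of whose upper bounds project into $L_\gamma\setminus C$. However, $\{\pi_{L_\gamma}(u):u\ge e\}$ is itself a cofinal meet-subsemilattice of $L_\gamma$, because $\pi_{L_\gamma}(e\vee x)\ge x$ and $\pi_{L_\gamma}$ preserves meets. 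So a kill is a property of the entire infinite cone above $e$. Also $e\vee x\notin L_\gamma$ for every $x\in L_\gamma$, so each new layer is infinite and is determined by how it projects onto the old part. A finite grid does not control this, and ``taking joins freely'' is unavailable, since every $3$-ladder has breadth at most $3$.

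Moreover, whether such an $e$ can exist at all depends on the shape of $L_\gamma$. You need every witness to be \emph{thin} on a designated part of $L_\gamma$, with the thinness measured by a real so that domination can act. This is exactly where the $2$-ladder hypothesis must be used. Your plan contains no such lemma; the ``compactness'' and ``bounded positions of lower covers'' statements are too imprecise to play this role. The paper makes every layer a copy $\{\gamma\}\times\omega\times\omega$ of $\omega\times\omega$ and computes projections explicitly (Claim~\ref{thm:dominating:claim2}). It then proves, by an infinite-descent argument using the $2$-ladder property (Claim~\ref{thm:dominating:claim3}), that for suitable $\gamma$ all but one column $N$ of $\pi_\gamma[\{\gamma\}\times\omega\times\omega]$ meet $C$ finitely. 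Only then is $r(n)=\max\{m:\pi_\gamma(\gamma,n,m)\in C\}$ a genuine real.

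The indexing of your diagonalization is also off. The code of $C\cap L_\alpha$ depends on $\alpha$ and on the chosen enumeration of $L_\alpha$, and nothing ties codes at different levels together. So there need be no $\alpha$ at which that code is $\le^* f_\alpha$ (or $\le^* f_\beta$ for some $\beta\le\alpha$), and a closure argument does not repair this. What works is to fix one level $\gamma$ with its real $r$ and refute $C$ at a later stage $\delta$ \emph{relative to $L_\gamma$}. Each new layer must then control its projections to all earlier levels simultaneously and coherently.

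The paper packages this into one map $\varrho:[\omega_1]^2\to{}^\omega\omega$:
\begin{itemize}
\item layer $\delta$ sits over $(\alpha,n,m)$ according to $\varrho(\alpha,\delta)$;
\item conditions ($\varrho$1)--($\varrho$5) make $(K,\trianglelefteq_\varrho)$ a lower finite lattice, hence automatically a $3$-ladder (Lemma~\ref{lemma:Kladder});
\item the recursion of Theorem~\ref{thm:Kexistence} ensures $f_\delta\le^*\varrho(0,\delta)$.
\end{itemize}
Take $\delta$ with $\varrho(0,\delta)$ dominating $r$ and every $\varrho(0,\alpha)$ for $\alpha\le\gamma$. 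Then ($\varrho$2) yields $r<^*\varrho(\gamma,\delta)$. Every element above $(\delta,M,\varrho(\gamma,\delta)(M))$, for large $M>N$, projects to some $\pi_\gamma(\gamma,a',b)$ with $a'\ne N$ and $b>r(a')$, hence outside $C$; this is the contradiction described above.

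Your outline is missing three ideas: rigid $\omega\times\omega$ layers glued by a coherent $\varrho$, the thinness claim, and blocking at level $\delta$ relative to level $\gamma$.
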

Thus, in particular, the existence of a maximal $3$-ladder of cardinality $\aleph_1$ follows from $\mathsf{CH}$. The proof of Theorem~\ref{thm:main2} isolates a natural class of $3$-ladders that exhibits a particularly strong relationship with the dominating number.

The question about the existence of maximal $3$-ladders of cardinality $\aleph_1$ has a purely order-theoretic version, which we believe has independent interest: is there a maximal $3$-ladder of breadth $2$? Indeed, a maximal $3$-ladder of breadth $2$ must have cardinality $\aleph_1$: every maximal $3$-ladder is uncountable by Theorem~\ref{thm:Ditormax} and every lower finite join-semilattice of breadth $2$ has cardinality at most $\aleph_1$ by Theorem~\ref{thm:Ditor}. Our next theorem shows that, consistently, there are such maximal ladders. In particular, we construct a maximal $3$-ladder of breadth $2$ from a guessing principle, the club principle $\clubsuit$, which is a weakening of Jensen's diamond principle $\diamondsuit$.

\begin{maintheorem}\label{thm:main3}
If $\clubsuit$ holds, then there exists a maximal $3$-ladder of breadth $2$.
\end{maintheorem}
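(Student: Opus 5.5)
The plan is a transfinite construction of length $\omega_1$: build $L=\bigcup_{\alpha<\omega_1}L_\alpha$ as the union of a continuous increasing chain of countable $3$-ladders of breadth $2$, each $L_\alpha$ a proper ideal of $L_{\alpha+1}$, with underlying set an ordinal $<\omega_1$. Because each $L_\alpha$ is an ideal of $L$, lower finiteness, the bound of $3$ lower covers and breadth $2$ are checked locally at each stage. They then pass to the union, since every element and its principal ideal already live in some $L_\alpha$. Uncountability is automatic. The whole difficulty is maximality, which we attack through the characterization in Theorem~\ref{thm:characterization}.

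I expect to read Theorem~\ref{thm:characterization} as follows: a ladder fails to be maximal exactly when it admits a suitable \emph{witness}. I would first check that, for our $L$, such a witness is essentially an uncountable subset $W\subseteq L$ (or a map defined on $L$) with a property preserved under restriction. Heuristically, if $L$ is a proper ideal of $L'$ and $x\in L'\setminus L$ is minimal, then $a\mapsto a\vee x$ sends $L$ into the elements above $x$, which carry a ladder structure with one fewer lower cover available. Here that leaves a ``$2$-ladder-like'' shadow of $L$, and this shadow should produce the witness.

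A closure argument then gives a club $C\subseteq\omega_1$ such that, for $\delta\in C$, $W\cap L_\delta$ is a witness of the same kind for $L_\delta$, cofinal in $L_\delta$. By $\clubsuit$, fix a sequence $\langle A_\delta:\delta\in\lim(\omega_1)\rangle$, each $A_\delta\subseteq\delta$ cofinal of type $\omega$, such that every uncountable set contains some $A_\delta$. Since $W$ is uncountable, $A_\delta\subseteq W$ for some $\delta\in C$ (after a stationary refinement if needed).

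The successor step at a limit $\delta$ is where the construction does its work. If $A_\delta=\{a_0<a_1<\cdots\}$ is cofinal in $L_\delta$, add new elements $c_0<c_1<\cdots$ on top of $L_\delta$ with
\[
c_0=a_0\vee a_1 \quad\text{(new)},\qquad c_{k+1}=c_k\vee a_{k+2} \quad\text{(new)}.
\]
Each $c_{k+1}$ gets lower covers among $c_k$, $a_{k+2}$ and at most one further element chosen to make joins with old elements land correctly. Every new element is a join of two earlier elements, which preserves breadth $2$. Joins of old elements $b$ with the $c_k$ are defined as $c_m$ for the least $m$ with $b$ below some $a_j$, $j\le m+1$. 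The resulting $L_{\delta+1}$ should be a lattice in which $L_\delta$ is an ideal.

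The point of the design is that this ``ladder-shaped spine'' forces the elements of $A_\delta$ to have joins that use up the available lower covers. Hence no witness $W\supseteq A_\delta$ can extend consistently from $L_\delta$ to $L_{\delta+1}$, and therefore not to $L$ either, since $L_{\delta+1}$ is an ideal of $L$. At nonguessing stages we extend arbitrarily, for instance by adding a top-like chain, so that the construction continues.

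I expect the main obstacle to be this killing step: designing the local gadget over an arbitrary $\omega$-sequence $A_\delta$ so that it simultaneously (i) keeps every element with at most $3$ lower covers and the lattice of breadth $2$, and (ii) genuinely refutes every witness containing $A_\delta$. The subtlety is that $\clubsuit$ only guesses a subset of $W$, not $W$ itself. So the obstruction must be triggered by \emph{any} cofinal $\omega$-sequence inside a witness, and may not depend on the rest of $W\cap\delta$. My first attempt would be to make the $c_k$ force, for any witness, an infinite strictly descending or unbounded demand below a single element, contradicting lower finiteness in the would-be extension $L'$. If that fails, I would thin $A_\delta$ first, using breadth $2$ and Theorem~\ref{thm:Ditor}, to a sequence in a ``generic'' position relative to $L_\delta$.
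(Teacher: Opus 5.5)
There is a genuine gap, and it sits exactly at the step you flag as the main obstacle: the killing gadget is never specified, the gadget you sketch cannot work, and guessing a subset of the witness points the wrong way.

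\textbf{The gadget.} Since $L_\delta$ is to be an ideal of $L_{\delta+1}$, joins of old elements are old, so $c_0=a_0\vee a_1$ cannot be new. More importantly, every new element $c$ has at most one old lower cover, namely $\pi_{L_\delta}(c)$, because every old element below $c$ lies below it. So if the block added at a stage is a chain (your spine $c_0<c_1<\cdots$, or the ``top-like chain'' at nonguessing stages), each new element has at most two lower covers. If every block is of this kind, $L$ is itself a $2$-ladder, hence trivially non-maximal as a $3$-ladder. The blocks must themselves be branching $2$-ladders, so that the projection supplies the third lower cover. The paper uses $E=\omega\times\{0,1\}$, where $(n,1)$ covers both $(n-1,0)$ and $(n-1,1)$.

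\textbf{The direction of guessing.} Knowing $A_\gamma\subseteq C$ for a witness $C$ gives you no leverage. Suppose $C$ is a cofinal meet-subsemilattice which is a $2$-ladder, and $C\cap L_\gamma$ is cofinal in $L_\gamma$. The only thing stage $\gamma$ can force on $C$ is that $\pi_{L_\gamma}(x)\in C$ for every $x\in C$: pick $y\in C\cap L_\gamma$ above $\pi_{L_\gamma}(x)$, and then $x\wedge y=\pi_{L_\gamma}(x)$. A contradiction therefore requires routing the projections of the new block into positions known to be \emph{outside} $C$. Incidentally, an ordinal-unbounded $A_\gamma$ need not be cofinal in the order of $L_\gamma$ either.

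This is what the paper does, and it needs three ingredients your plan lacks.
\begin{enumerate}
\item A structural lemma: a meet-subsemilattice that is a $2$-ladder contains infinitely many elements of both types $(\alpha,m,0)$ and $(\alpha,m,1)$ at most at one level $\alpha$. It is driven by the requirement that projections be injective on each level. Consequently, for some fixed $n,i$ there is an uncountable set $X$ of levels at which $C$ omits every $(\alpha,m,i)$ with $m\ge n$.
\item Applying $\clubsuit$ to $\{2\alpha+i:\alpha\in X\}$, where the parity records the type $i$. This yields $\gamma$ with $C\cap K_\gamma$ cofinal in $K_\gamma$ and $A_\gamma$ inside that set.
\item A construction in which, whenever $A_\gamma$ has constant parity $i$, $\pi_\gamma(\gamma,n,j)=(\beta,m,i)$ for some $\beta$ with $2\beta+i\in A_\gamma$ and some $m\ge n$. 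The cofinal chain is threaded through guessed levels; for $i=0$ this uses the auxiliary requirement that every earlier element lies below infinitely many $(\beta,m,0)$.
\end{enumerate}
Then any $x\in C$ above $(\gamma,n,0)$ has $\pi_\gamma(x)\notin C$, contradicting the observation above. Without these ingredients, or a genuine substitute, your proposal does not establish maximality.
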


Finally, we study how forcing affects the maximality of $n$-ladders. For a forcing notion $\mathbb{P}$, a maximal $n$-ladder $L$ is $\mathbb{P}$-indestructible if $\mathbb{P}$ forces $L$ to stay maximal in every $\mathbb{P}$-generic extension; otherwise, $L$ is $\mathbb{P}$-destructible. This notion lies at the core of many open questions (see Questions~\ref{q:3} and \ref{q:4}) revolving around the main one (Question~\ref{q:1}): is the existence of a $3$-ladder of cardinality $\aleph_2$ a theorem of $\mathsf{ZFC}$? 


Our last result answers a question that naturally arises given Wehrung's work and Theorem~\ref{thm:main1}, and that is intimately connected to Ditor's problem (see Question~\ref{q:3}): is it necessary to add new generic reals in order to destroy the maximality of an $n$-ladder? The proof of Theorem~\ref{thm:main1}, extending Wehrung's forcing result, shows that we can always destroy the maximality of an $n$-ladder of cardinality $< \aleph_{n-1}$ by adding enough Cohen reals, while Lemma~\ref{lemma:bounding} shows that for certain maximal $3$-ladders adding new reals is indeed necessary. Our final theorem nevertheless gives a negative answer: consistently, there exists a maximal $3$-ladder whose maximality is destroyed by a forcing that does not add new reals.

\begin{maintheorem}\label{thm:main4}
If $\diamondsuit$ holds, then there is a maximal $3$-ladder which is $T$-destructible for some Suslin tree $T$.
\end{maintheorem}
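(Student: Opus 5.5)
We will build, by recursion of length $\omega_1$ guided by a $\diamondsuit$-sequence, a Suslin tree $T$ together with a $3$-ladder $L$ of cardinality $\aleph_1$ (indeed of breadth $2$, on the model of the $\clubsuit$ construction of Theorem~\ref{thm:main3}). The construction has to do two things at once. First, $\diamondsuit$ guesses every potential one-point extension of $L$ in the ground model, and the construction kills it, which makes $L$ maximal. Second, $T$ is arranged so that a cofinal branch through $T$ codes an extension of $L$ that the recursion never killed, which makes $L$ $T$-destructible.

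By the characterization of maximality (Theorem~\ref{thm:characterization}), $L$ fails to be maximal exactly when some combinatorial witness exists. The natural candidate is a new element $x$ placed above an ideal, i.e. a choice of at most $3$ lower covers compatible with lower finiteness and with the lattice structure. Given the countable approximations, such a witness is determined by a coherent sequence of countable objects $(W_\alpha)_{\alpha<\omega_1}$, where $W_\alpha$ is the trace of the witness on $L_\alpha$, the first $\alpha$ levels of the construction.

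The plan is as follows.
\begin{enumerate}
\item Build $L=\bigcup_{\alpha<\omega_1}L_\alpha$ with each $L_\alpha$ a countable $3$-ladder and $L_\alpha$ an ideal of $L_\beta$ for $\alpha<\beta$. At the same time build $T=\bigcup_\alpha T_\alpha$, where the nodes of $T$ at level $\alpha$ are certain candidate traces $W$ on $L_\alpha$ (``partial extensions''), ordered by end-extension.
\item At a limit stage $\alpha$, $\diamondsuit$ supplies a guess $A_\alpha$. We do two things with it. If $A_\alpha$ codes a maximal antichain of $T_\alpha$, we extend only those branches of $T_\alpha$ that pass through $A_\alpha$; this is the usual argument that $T$ is Suslin. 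If instead $A_\alpha$ codes a ground-model witness $W\cap L_\alpha$ that is not a branch of $T_\alpha$ (so not generic), we add new elements to $L_{\alpha+1}$ that obstruct every extension of $W\cap L_\alpha$. The obstruction is a new element below the would-be $x$ that forces $x$ to have $\geq 4$ lower covers or an infinite downset, in the same way as in the proof of Theorem~\ref{thm:main3}.
\item Check that every ground-model witness is killed. Its traces are guessed stationarily often by $\diamondsuit$, so if it survives it must agree with a branch of $T$. But every such branch is cofinal, hence generic over a Suslin tree, hence not in $V$. So $L$ is maximal in $V$.
\item In $V^T$, let $b$ be the generic branch. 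The union $\bigcup_{\alpha}b(\alpha)$ is a coherent witness, because no node on $b$ was ever obstructed. It therefore yields a $3$-ladder $L\cup\{x\}$ (or a larger one) in which $L$ is a proper ideal, so $T$ destroys the maximality of $L$.
\end{enumerate}
Since $T$ is Suslin, forcing with it adds no reals, which matches the remark preceding Theorem~\ref{thm:main4}.

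The main obstacle is the tension between steps 2 and 4. Killing a guessed trace must not destroy the nodes of $T$ that lie above it, so the obstruction has to target traces that are not in $T_{\alpha+1}$ while leaving extensions along every surviving node intact. Meanwhile each tree node must remain extendable through limit levels, i.e. $T$ must be normal with every node having successors at all higher levels, and this requires building the $L_\alpha$ so that countably many prescribed traces can always be extended by adding new elements. I would first try to prove a one-step extension lemma: given a countable $3$-ladder $L_\alpha$, countably many traces $W_n$ to preserve, and one trace $W$ to kill with $W\neq W_n$ for all $n$, there is a countable $3$-ladder $L_{\alpha+1}\supseteq L_\alpha$ as an ideal that preserves each $W_n$ and kills $W$. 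The key point should be that a difference between $W$ and the $W_n$ appears at some element, and the obstruction can be placed above that element alone.
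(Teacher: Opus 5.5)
Your architecture is the paper's. You run a $\diamondsuit$-guided recursion that builds the ladder together with a Suslin tree whose nodes carry partial witnesses (in the paper, $\Gamma(x)$ is a cofinal meet-subsemilattice of $K_{\mathrm{ht}(x)+1}$ that is a $2$-ladder, increasing along $\le_T$). Antichains are sealed at limits, and a generic branch yields a witness via Theorem~\ref{thm:characterization}. But the step you flag as the main obstacle is where the proof actually lives, and what you propose there does not work.

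\textbf{The witness and the kill are misidentified.} By Theorem~\ref{thm:characterization} the witness is a cofinal meet-subsemilattice $C$ that is a $2$-ladder. It cannot be a single new element $x$: such an $x$ lies above only finitely many elements, so $x\vee y$ is new for every $y\not\le x$. The kill is also not about giving $x$ four lower covers. As in Theorem~\ref{thm:main3}, one places the new level over a cofinal increasing sequence $(p_n)$ in $K_\alpha$ with every $p_n$ outside the guessed set $\psi``A_\alpha$. Then $\pi_{K_\alpha}$ of every new element is some $p_n$. Hence no cofinal meet-subsemilattice $C$ can have $C\cap K_\alpha=\psi``A_\alpha$ cofinal in $K_\alpha$: for $p\in C$ above the new level and $q\in C\cap K_\alpha$ above $\pi_{K_\alpha}(p)$, we get $\pi_{K_\alpha}(p)=p\wedge q\in C$.

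\textbf{Your one-step lemma is false as stated.} A difference at one element does not suffice. Take $L_\alpha=\omega$, $W_0=\omega$ and $W=\omega\setminus\{0\}$. In any extension where $W_0$ extends to some $W_0'$, the set
\[
W\cup\{v\in W_0'\setminus\omega:\pi_\omega(v)\neq 0\}
\]
is a cofinal meet-subsemilattice $2$-ladder extending $W$. Meets are handled by Lemma~\ref{lemma:meetproj}, and every lower cover in this set is already a lower cover in $W_0'$. In the mechanism above, each preserved trace must contain infinitely many $p_n$ while the killed one contains none. So preserved and killed traces must differ cofinally, and you cannot take the $W_n$ as given.

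\textbf{Step 3 is incomplete for the same reason.} A guess that is only a bounded modification of an extended branch cannot be killed. And ``agrees with a branch modulo a bounded set'' does not contradict Suslinity.

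\textbf{The missing idea.} Kill \emph{every} guess, with no case split on membership in $T_\alpha$. Then build the countably many branches through $T\restriction\alpha$ \emph{together with} the sequence $(p_n)$:
\begin{enumerate}
\item Choose $p_n=(\alpha_n,k_n,b_n)\notin\psi``A_\alpha$. This exists because a meet-subsemilattice that is a $2$-ladder can fail to have cofinal complement in at most one $K_\beta$ (Claim~\ref{claim:5}).
\item Steer each branch so that its $\Gamma$ contains $p_n$ at level $\alpha_n$. This uses a successor-level splitting property: every node has, at each later successor level $\beta$, extensions whose $\Gamma$ contains $(\beta,m,i)$ for all large $m$ and both $i\in\{0,1\}$.
\end{enumerate}
That splitting property, together with each $\Gamma(x)$ containing at most one of $(\beta,m,0)$ and $(\beta,m,1)$, is also what makes $T$ ever-branching and hence Suslin.
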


In Section~\ref{sec:prel}, we introduce the basic concepts and notation, discuss Ditor’s results, and prove our characterization theorem for maximal $n$-ladders (Theorem~\ref{thm:characterization}). Sections~\ref{sec:cohen}, \ref{sec:dominating} and \ref{sec:club} are devoted to the proofs of Theorems~\ref{thm:main1}, \ref{thm:main2} and \ref{thm:main3}, respectively. In Section~\ref{sec:diamond} we start by discussing the notion of destructibility with respect to the results of the previous sections and then we prove Theorem~\ref{thm:main4}. Finally, Section~\ref{sec:openquestions} contains a selection of the many questions that remain open.

\section{Preliminaries}\label{sec:prel}

\subsection{Notation and basic concepts}
The monographs \cite{MR1940513} and \cite{MR2768581} are our references for all classical definitions and notation in set theory and lattice theory, respectively. 

A \emph{tree} $(T, \le)$ is a poset such that, for each $x \in T$, the set $\{y \in T \mid y < x\}$ is well-ordered by $\le$. If $x \in T$, the \emph{height} of $x$ in $T$, denoted by $\mathrm{ht}(x)$, is the order-type of $\{y \in T \mid y < x\}$. Moreover, for each ordinal $\alpha$, the $\alpha$-th \emph{level} of $T$, denoted by $T(\alpha)$ is the set of all the elements of $T$ of height $\alpha$. 

A \emph{join-semilattice} is a nonempty set equipped with a binary, associative, commutative, and idempotent operation called \emph{join}, denoted by $\vee$; it induces a partial order via $x \le y \iff x \vee y = y$. Equivalently, a join-semilattice is a partially ordered set in which every pair of elements $x,y$ admits a least upper bound, denoted by $x \vee y$. The dual notion is the \emph{meet-semilattice}, with the operation called \emph{meet}. A \emph{lattice} is both a join- and a meet-semilattice. We treat (semi)lattices as algebraic structures or as posets depending on what representation is better suited for the given context.

Given a poset $(P, \le)$ and some $p \in P$, we denote by $P \downarrow p$ and $P \uparrow p$ the sets $\{q \in P : q \le p\}$ and $\{q \in P : q \ge p\}$, respectively. Sometimes, instead of $P \downarrow p$ we write ${\le} \downarrow p$ or simply ${\downarrow} p$, when no ambiguity arises.  Given a set $X \subseteq P$, we call the set $\bigcup_{p \in X} {\downarrow} p$ the \emph{downward closure} of $X$ and denote it by ${\downarrow }X$. A subset $D \subseteq P$ is \emph{downward closed} if $D = {\downarrow} D$. Moreover, an \emph{ideal} of $P$ is a nonempty subset $I$ of $P$ which is downward closed and upward directed (i.e., every two elements of $I$ have an upper bound in $I$). An ideal $I$ that does not coincide with the whole poset is called a \emph{proper ideal}. Note that ${\downarrow} p$ is an ideal of $P$ for every $p \in P$; such ideals are known as \emph{principal ideals}. A poset is \emph{lower finite} if its principal ideals are finite. 

If $P$ is a join-semilattice, it follows trivially that ideals are the downward closed subsets which are closed under $\vee$. Moreover, given a nonempty subset $X \subseteq P$, the ideal \emph{generated by} $X$ is denoted by $\mathrm{id}(X)$, i.e.,
\[
\mathrm{id}(X) \coloneqq {\downarrow} \big\{x_0 \vee x_1 \vee \ldots \vee x_k : k < \omega \text{ and } x_i \in X \text{ for all }i \le k\big\}.
\]

If $P$ is a meet-semilattice, recall that a nonempty subset $S\subseteq P$ is called a \emph{meet-subsemilattice} of $P$ if it is closed under binary meets---equivalently, every two elements in $S$ have their greatest lower bound in $S$.


Furthermore, given two elements $p,q \in P$, $q$ is a \emph{lower cover of $p$} if $q < p$ and there is no $x \in P$ with $q < x < p$. 

Let us also recall the definition of breadth, a classical numeric invariant of lattice theory. 
\begin{definition}
Let $P$ be a join-semilattice and $n > 0$ be a positive integer. We say that $P$ has \emph{breadth at most $n$} if, for every nonempty finite subset $X$ of  $P$, there exists $Y\subseteq X$ with at most $n$ elements such that $\bigvee X = \bigvee Y$. The \emph{breadth} of $P$ is the least $n > 0$ such that $P$ has breadth at most $n$, if such $n$ exists.
\end{definition}
In fact, there is a more general notion of breadth which is self-dual and purely poset-theoretical \cite[\S 4]{MR0732199}. Note that the class of join-semilattices of breadth $1$ coincides with the class of linear orders. The next lemma is immediate from the definition.

\begin{lemma}
Given a join-semilattice $P$ and an $n > 0$, the following are equivalent:
\begin{enumerate}[label={\upshape (\arabic*)}]
\itemsep0.3em
\item $P$ has breadth at most $n$.
\item For every $X \in [P]^{n+1}$, there exists $Y \in [X]^n$ such that $\bigvee X = \bigvee Y$.
\end{enumerate}
\end{lemma}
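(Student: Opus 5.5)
The plan is to prove the two implications separately. The direction (1)$\Rightarrow$(2) is a direct specialization. The direction (2)$\Rightarrow$(1) goes by induction on the size of the finite set.

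For (1)$\Rightarrow$(2), fix $X \in [P]^{n+1}$. By (1) there is some $Y \subseteq X$ with $|Y| \le n$ and $\bigvee X = \bigvee Y$. Since $|X| = n+1 > |Y|$, we can enlarge $Y$ to some $Y' \in [X]^n$ with $Y \subseteq Y' \subseteq X$. Monotonicity of joins then gives
\[
\bigvee X = \bigvee Y \le \bigvee Y' \le \bigvee X,
\]
so $\bigvee Y' = \bigvee X$, as required.

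For (2)$\Rightarrow$(1), argue by induction on $|X|$ for nonempty finite $X \subseteq P$. If $|X| \le n$, take $Y = X$. If $|X| = k > n$, the goal is to find $x \in X$ with $\bigvee (X \setminus \{x\}) = \bigvee X$, and then apply the induction hypothesis to $X \setminus \{x\}$. To find such an $x$:
\begin{enumerate}
\item Pick any $Z \in [X]^{n+1}$.
\item By (2), obtain $W \in [Z]^n$ with $\bigvee W = \bigvee Z$, and let $x$ be the unique element of $Z \setminus W$.
\item Then $x \le \bigvee Z = \bigvee W$, and $W \subseteq X \setminus \{x\}$.
\item Hence $\bigvee X = x \vee \bigvee(X \setminus \{x\}) = \bigvee(X \setminus \{x\})$, since $x$ is already below $\bigvee W \le \bigvee (X \setminus \{x\})$.
\end{enumerate}
The induction hypothesis applied to $X \setminus \{x\}$ yields $Y \subseteq X \setminus \{x\} \subseteq X$ with $|Y| \le n$ and $\bigvee Y = \bigvee(X\setminus\{x\}) = \bigvee X$.

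No step is a genuine obstacle. The only point needing care is the index bookkeeping: (2) is stated with exactly $n$-element subsets, while (1) allows at most $n$, and this mismatch is what the padding in the first direction handles.
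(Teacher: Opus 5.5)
Your proof is correct. The paper gives no written proof and simply calls the lemma immediate from the definition; your padding argument for (1)$\Rightarrow$(2) and your induction that deletes a redundant element of an $(n+1)$-subset for (2)$\Rightarrow$(1) are exactly the routine verification being left to the reader.
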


Finally, let us introduce a nonstandard notation. Given a lower finite lattice $P$, an ideal $I\subseteq P$, and an element $x \in P$, let
\[
\pi_I(x) \coloneqq \bigvee \{y \in I : y \le x\}.
\]
Since ${\downarrow} x$ is finite, and $P$ has a least element, the set $\{y \in I : y \le x\}$ is finite and nonempty; hence its join is well-defined. Equivalently, $\pi_I(x)$ is the greatest element of $I \cap ({\downarrow} x)$.

The next two lemmas establish properties of the map $\pi_I$.

\begin{lemma}\label{lemma:meetproj}
Given a lower finite lattice $P$, some elements $x,y \in P$ and an ideal $I \subseteq P$, $\pi_I(x \wedge y) = \pi_I(x) \wedge \pi_I(y)$.
\end{lemma}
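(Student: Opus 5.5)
We prove the two inequalities separately, working throughout with the description of $\pi_I(z)$ as the greatest element of $I \cap {\downarrow} z$.

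For $\pi_I(x \wedge y) \le \pi_I(x) \wedge \pi_I(y)$, we use monotonicity of $\pi_I$. If $z \le w$, then $I \cap {\downarrow} z \subseteq I \cap {\downarrow} w$, so $\pi_I(z) \le \pi_I(w)$. Applying this to $x \wedge y \le x$ and to $x \wedge y \le y$ gives $\pi_I(x\wedge y) \le \pi_I(x)$ and $\pi_I(x\wedge y) \le \pi_I(y)$. Hence $\pi_I(x\wedge y)$ is a lower bound of both, and so it lies below their meet.

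For the reverse inequality, set $u \coloneqq \pi_I(x) \wedge \pi_I(y)$.
\begin{itemize}
\item Since $\pi_I(x) \in I$ and $u \le \pi_I(x)$, and $I$ is downward closed, we get $u \in I$.
\item From $\pi_I(x) \le x$ and $\pi_I(y) \le y$ we get $u \le x \wedge y$.
\end{itemize}
Thus $u \in I \cap {\downarrow}(x \wedge y)$. By maximality of $\pi_I(x\wedge y)$ in that set, $u \le \pi_I(x \wedge y)$.

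Combining the two inequalities gives the equality. There is no real obstacle here. The only points needing care are that $\pi_I$ is well defined, which uses lower finiteness and the least element as noted before the lemma, and that $\pi_I(x)$ actually belongs to $I$. The latter holds because it is a finite join of elements of $I$, and ideals of a join-semilattice are closed under $\vee$.
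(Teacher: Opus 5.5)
Your proposal is correct and follows essentially the same route as the paper: you get $\pi_I(x)\wedge\pi_I(y)\le\pi_I(x\wedge y)$ from membership in $I\cap{\downarrow}(x\wedge y)$, and the reverse inequality from $\pi_I(x\wedge y)\le x,y$ together with $\pi_I(x\wedge y)\in I$. Your only difference is that you phrase this second direction as monotonicity of $\pi_I$, which rests on the same maximality property.
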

\begin{proof}
Since $\pi_I(x) \le x$, we have $\pi_I(x) \wedge \pi_I(y) \le x \wedge y$. As $\pi_I(x) \wedge \pi_I(y) \in I$, we conclude  $\pi_I(x) \wedge \pi_I(y) \le \pi_I(x \wedge y)$.

Furthermore, since $\pi_I(x \wedge y) \le x \wedge y \le x,y$ and $\pi_I(x \wedge y) \in I$,  we conclude $\pi_I(x \wedge y) \le \pi_I(x) \wedge \pi_I(y)$. Overall, we have $\pi_I(x \wedge y) = \pi_I(x) \wedge \pi_I(y)$.
\end{proof}

In other words, Lemma~\ref{lemma:meetproj} tells us that $\pi_I : P\rightarrow I$ is a meet-homomorphism. Moreover, if $x,y \in P$ are such that $x \wedge y \in I$, we conclude from Lemma~\ref{lemma:meetproj} that $x \wedge y = \pi_I(x) \wedge \pi_I(y)$.

\begin{lemma}\label{lemma:concproj}
If $P$ is a lower finite lattice and $I,J \subseteq P$ are ideals with $I \subseteq J$, then $\pi_I \circ \pi_J = \pi_I$.
\end{lemma}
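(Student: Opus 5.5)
We argue directly from the characterization of $\pi_I(x)$ as the greatest element of $I \cap ({\downarrow} x)$, proving the two inequalities $\pi_I(\pi_J(x)) \le \pi_I(x)$ and $\pi_I(x) \le \pi_I(\pi_J(x))$ for an arbitrary $x \in P$, in the same spirit as the proof of Lemma~\ref{lemma:meetproj}.

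For the first inequality, note that $\pi_I(\pi_J(x))$ belongs to $I$ and satisfies $\pi_I(\pi_J(x)) \le \pi_J(x) \le x$. So it is an element of $I \cap ({\downarrow} x)$, and hence it is bounded above by the greatest such element, namely $\pi_I(x)$.

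For the reverse inequality, we use that $\pi_I(x) \in I \subseteq J$ and $\pi_I(x) \le x$. Thus $\pi_I(x) \in J \cap ({\downarrow} x)$, and so $\pi_I(x) \le \pi_J(x)$. Consequently $\pi_I(x)$ is an element of $I \cap {\downarrow}\pi_J(x)$. Since $\pi_I(\pi_J(x))$ is the greatest element of that set, we get $\pi_I(x) \le \pi_I(\pi_J(x))$.

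Combining the two inequalities gives $\pi_I(\pi_J(x)) = \pi_I(x)$ for every $x \in P$, which is the claim. There is no real obstacle here. The only point requiring care is the inclusion $I \subseteq J$: it is used exactly once, to place $\pi_I(x)$ inside $J$, which is what lets us compare it with $\pi_J(x)$.
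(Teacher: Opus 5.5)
Your proposal is correct and follows essentially the same two-inequality argument as the paper. The only small difference is that the paper obtains $\pi_I(x) \le \pi_J(x)$ by observing that $\pi_I(x) \vee \pi_J(x)$ lies in $J \cap {\downarrow} x$. You get the same inequality directly from $\pi_I(x) \in J \cap {\downarrow} x$ and the greatest-element characterization of $\pi_J(x)$, which is a slight streamlining.
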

\begin{proof}
Fix an $x \in P$. Clearly, $\pi_I \circ \pi_J(x) \le \pi_I(x)$. Furthermore, as $I \subseteq J$ and $J$ is an ideal, we have $\pi_I (x) \vee \pi_J(x) \in J$. We have also $\pi_I(x) \vee \pi_J(x) \le x$, since $\pi_I(x), \pi_J(x) \le x$. Thus, $\pi_I (x) \vee \pi_J(x) \le \pi_J(x)$, or, equivalently, $\pi_I(x) \le \pi_J(x)$. It directly follows from the last statement that $\pi_I(x) \le \pi_I \circ \pi_J (x)$. Overall, $\pi_I \circ \pi_J (x) = \pi_I (x)$.
\end{proof}

\subsection{Ladders and Ditor's results}

An \emph{$n$-ladder} is a lower finite lattice in which every element has at most $n$ lower covers. It is easy to prove that every $n$-ladder has breadth at most $n$ \cite[Proposition 4.1]{MR0732199}. The converse fails: the breadth can be strictly smaller than the lower cover bound; for example, the diamond lattice $M_3$ is a $3$-ladder of breadth $2$.  

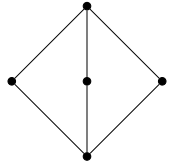
\begin{figure}[H]
\centering
\begin{tikzpicture}
    \node [draw, shape = circle, fill = black, minimum size = 0.1cm, inner sep=0pt] at (0,1) (t){};
    \node [draw, shape = circle, fill = black, minimum size = 0.1cm, inner sep=0pt] at (0,-1) (b){};
     \node [draw, shape = circle, fill = black, minimum size = 0.1cm, inner sep=0pt] at (-1,0) (a0){};
      \node [draw, shape = circle, fill = black, minimum size = 0.1cm, inner sep=0pt] at (0,0) (a1){};
       \node [draw, shape = circle, fill = black, minimum size = 0.1cm, inner sep=0pt] at (1,0) (a2){};
    \draw[ultra thin] (b)--(a0)--(t)--(a1)--(b)--(a2)--(t);
\end{tikzpicture}
\caption{Hasse diagram of $M_3$}
\end{figure}
In 1984, Ditor proved the following result, which gives a cardinal upper bound on the domain of a join-semilattice given its (finite) breadth and the cardinality of its principal ideals. 
\begin{theorem}[{Ditor, \cite{MR0732199}}]\label{thm:Ditor}
Given some $n > 0$ and an infinite cardinal $\kappa$, if $P$ is a join-semilattice of breadth at most $n$ whose principal ideals have cardinality $< \kappa$, then
\begin{enumerate}[label={\upshape (\arabic*)}]
\itemsep0.3em
\item\label{thm:Ditor-1}
\( |P| \le \kappa^{+(n-1)} \), and
\item \label{thm:Ditor-2}
\( |I| < \kappa^{+(n-1)} \) for every proper ideal $I$ of $P$.
\end{enumerate}
\end{theorem}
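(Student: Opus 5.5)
We argue by induction on $n$, proving \ref{thm:Ditor-1} and \ref{thm:Ditor-2} together. For $n=1$ the semilattice $P$ is a linear order in which every principal ideal has size $<\kappa$. Any proper ideal $I$ is an initial segment bounded by some $p\notin I$, so $I\subseteq{\downarrow}p$ and $|I|<\kappa$. For $P$ itself, choose a cofinal well-ordered subset; it has order type $\le\kappa$, since every proper initial segment has size $<\kappa$. Then $P$ is a union of at most $\kappa$ sets of size $<\kappa$, so $|P|\le\kappa$. This gives both items with $\kappa^{+0}=\kappa$.

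For the inductive step, fix $n>1$ and assume the theorem for $n-1$ and all infinite $\kappa$. We first prove \ref{thm:Ditor-2}. Let $I$ be a proper ideal and fix $p\notin I$. We want to show $|I|<\kappa^{+(n-1)}$.

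The key observation concerns breadth. Set $J_a=\{x\in I: x\le a\}$ and look at the map $x\mapsto x\vee p$. Consider the set $Q=\{x\vee p: x\in I\}$. This is a join-subsemilattice of $P\uparrow p$. We claim $Q$ has breadth at most $n-1$. Indeed, take $n$ elements $x_1\vee p,\dots,x_n\vee p$. Apply breadth $\le n$ of $P$ to the $n+1$ elements $x_1,\dots,x_n,p$. If $p$ can be dropped, then $\bigvee x_i\ge p$, so $p\in I$, a contradiction. Hence some $x_j$ can be dropped, which gives $\bigvee_i(x_i\vee p)=\bigvee_{i\ne j}(x_i\vee p)$.

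Next we need control on the fibres of the map $I\to Q$, $x\mapsto x\vee p$. The fibre over $q$ is contained in ${\downarrow}q$, which has size $<\kappa$. Principal ideals of $Q$ are contained in the corresponding principal ideals of $P$, so they also have size $<\kappa$. By the induction hypothesis \ref{thm:Ditor-1} for $n-1$, $|Q|\le\kappa^{+(n-2)}$. Therefore
\[
|I|\le \kappa^{+(n-2)}\cdot\kappa=\kappa^{+(n-2)}<\kappa^{+(n-1)}.
\]
Note that $\kappa^{+(n-2)}\ge\kappa$ because $n\ge2$.

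Now we prove \ref{thm:Ditor-1}. Suppose $|P|>\kappa^{+(n-1)}$ and let $\lambda=\kappa^{+(n-1)}$, which is regular. Build an increasing chain of ideals $\langle I_\alpha:\alpha<\lambda^+\rangle$: let $I_{\alpha+1}=\mathrm{id}(I_\alpha\cup\{p_\alpha\})$ for some $p_\alpha\notin I_\alpha$, and take unions at limit stages. Each $I_\alpha$ with $\alpha<\lambda^+$ has size $\le\lambda$, since adding one generator and closing costs at most $|I_\alpha|\cdot\kappa$. The construction never reaches $P$, because $|P|>\lambda$, so each $I_\alpha$ is proper. But already $I_{\lambda}$ (at stage $\lambda$) is a union of $\lambda$ strictly increasing ideals, so it has size exactly $\lambda$, and it is proper. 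This contradicts \ref{thm:Ditor-2}.

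The main obstacle is the breadth-reduction claim for $Q$. It must also be checked that $Q$ genuinely satisfies the hypotheses for $n-1$: it is a join-semilattice with small principal ideals. If $p$ is replaced by a varying element, this claim could fail, so fixing a single $p\notin I$ is essential. Everything else is cardinal arithmetic.
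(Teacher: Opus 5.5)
Your proof is correct, and it takes a different route from the one the paper points to. The paper gives no proof of this theorem. It cites Ditor and remarks, following Wehrung, only that item (1) is a fairly direct corollary of Kuratowski's Free Set Theorem.

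That route uses the set mapping $F(\{x_1,\dots,x_n\})={\downarrow}(x_1\vee\dots\vee x_n)\in[P]^{<\kappa}$. Breadth at most $n$ says exactly that $F$ has no free set of size $n+1$, so Kuratowski's theorem gives $|P|<\kappa^{+n}$ at once. No induction on $n$ is needed at the level of semilattices, and ideals play no role.

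You instead run a self-contained induction:
\begin{itemize}
\item Item (1) for $n-1$ yields item (2) for $n$ through the map $x\mapsto x\vee p$. Its image loses one unit of breadth because $p\notin I$ can never be the dropped element.
\item Item (2) for $n$ yields item (1) for $n$ by producing a proper ideal of size exactly $\kappa^{+(n-1)}$.
\end{itemize}
In effect you reprove the needed instance of the free set theorem inside semilattices. Its standard inductive proof also fixes a point outside a closed set and passes to a set mapping of lower arity.

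Your $p$-trick is also exactly what one would add to the Kuratowski route to obtain item (2), which the paper's remark does not address. One applies the theorem on $I$ to the mapping $\{x_1,\dots,x_{n-1}\}\mapsto I\cap{\downarrow}(p\vee x_1\vee\dots\vee x_{n-1})$. So your argument gives both items at once from first principles, while the free-set route isolates the purely combinatorial content and gets (1) in one line from a classical theorem.

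A few small points to tidy:
\begin{itemize}
\item Note that the $x_i$ are pairwise distinct and distinct from $p$, so the $(n+1)$-element form of breadth applies.
\item Specify $I_0$, say a principal ideal.
\item The regularity of $\lambda$ is never used.
\item For (1) the transfinite chain is unnecessary. It suffices to take $\mathrm{id}(S)$ for any $S\subseteq P$ with $|S|=\kappa^{+(n-1)}$, which is a proper ideal of size exactly $\kappa^{+(n-1)}$ whenever $|P|>\kappa^{+(n-1)}$.
\end{itemize}
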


As noted by Wehrung, Ditor's Theorem~\ref{thm:Ditor}\ref{thm:Ditor-1} is in fact a fairly direct corollary of Kuratowski's Free Set Theorem \cite{MR48518} (see also \cite[Theorem 46.1]{MR795592}). Since every $n$-ladder has breadth at most $n$, a direct consequence of Theorem~\ref{thm:Ditor}\ref{thm:Ditor-1} is that every  $n$-ladder has cardinality at most $\aleph_{n-1}$.

We now introduce the main notion of our paper: maximal $n$-ladders. 

\begin{definition}\label{def:maximal}
An $n$-ladder is \emph{maximal} if it is not isomorphic to a proper ideal of an $n$-ladder.
\end{definition}

In this definition, it does not matter if we consider order-isomorphisms or lattice-isomorphisms because these two notions coincide when the range of the isomorphism is an ideal.

Note that no $n$-ladder is maximal as an $(n+1)$-ladder. In other words, given an $n$-ladder $P$, we can always find an $(n+1)$-ladder $Q$ such that $P$ is isomorphic to a proper ideal of $Q$. The construction is very simple: if $P$ is an $n$-ladder, then the product lattice $P \times \{0,1\}$ equipped with the product ordering is an $(n+1)$-ladder and $P$ is trivially isomorphic to $P \times \{0\}$, which is a proper ideal of $P \times \{0,1\}$.

Even if maximal $n$-ladders have not yet been explicitly introduced, some results of Ditor and Wehrung on $n$-ladders can be naturally recast using this notion. For example, the following is a direct corollary of Theorem~\ref{thm:Ditor}\ref{thm:Ditor-2}.

\begin{corollary}[Ditor]\label{cor:ditornomax}
Every $n$-ladder of cardinality $\aleph_{n-1}$ is maximal.
\end{corollary}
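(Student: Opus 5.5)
The plan is to argue by contradiction and apply Theorem~\ref{thm:Ditor}\ref{thm:Ditor-2} with $\kappa = \aleph_0$. Suppose $L$ is an $n$-ladder with $|L| = \aleph_{n-1}$ that is not maximal. By Definition~\ref{def:maximal}, there are an $n$-ladder $Q$ and a proper ideal $I \subsetneq Q$ with $L \cong I$. In particular, $|I| = |L| = \aleph_{n-1}$.

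Next I check the hypotheses of Theorem~\ref{thm:Ditor} for $Q$. As noted after Theorem~\ref{thm:Ditor} (citing \cite[Proposition 4.1]{MR0732199}), every $n$-ladder has breadth at most $n$, so $Q$ is a join-semilattice of breadth at most $n$. Since $Q$ is lower finite, its principal ideals have cardinality $< \aleph_0$. So the hypotheses hold with $\kappa = \aleph_0$, where $\kappa^{+(n-1)} = \aleph_{n-1}$.

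Theorem~\ref{thm:Ditor}\ref{thm:Ditor-2} then gives $|I| < \aleph_{n-1}$ for every proper ideal $I$ of $Q$. This contradicts $|I| = \aleph_{n-1}$, so $L$ is maximal.

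The only point needing care is that the ideal is meant in the lattice sense and the isomorphism preserves cardinality, which is immediate. The case $n=1$ is also consistent with the argument: a $1$-ladder of size $\aleph_0$ embeds into no $1$-ladder as a proper ideal, since proper ideals of a lower finite chain are finite. I expect no real obstacle; the whole content lies in Theorem~\ref{thm:Ditor}\ref{thm:Ditor-2}.
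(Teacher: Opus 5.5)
Your proposal is correct and is exactly the argument the paper intends: it states this corollary as a direct consequence of Theorem~\ref{thm:Ditor}\ref{thm:Ditor-2} applied with $\kappa=\aleph_0$. That application uses the fact that every $n$-ladder has breadth at most $n$ and, being lower finite, has finite principal ideals, which is just what you checked.
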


As mentioned in the introduction, Ditor proved that there is a $2$-ladder of cardinality $\aleph_1$---thus giving a positive answer to his problem for $n = 2$. He does so by proving the following:
\begin{theorem}[{Ditor, \cite[\S 6.2]{MR0732199}}]\label{thm:Ditormax}
Every maximal $n$-ladder, with $n > 1$, is uncountable.
\end{theorem}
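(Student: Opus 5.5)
We prove the contrapositive: every countable $n$-ladder $P$ with $n>1$ is isomorphic to a proper ideal of some $n$-ladder. Since $P\downarrow p$ is isomorphic to a proper ideal of any $n$-ladder containing it as a proper ideal, it suffices to build, from a countable $n$-ladder $P$, an $n$-ladder $Q$ containing $P$ as a proper ideal. It is enough to treat $n=2$, and even to show directly that $P$ embeds as a proper ideal of a $2$-ladder-like extension in which the new elements have at most $2$ lower covers. The only subtlety is that a finite $P$ would be trivial: put one new top above it. So assume $P$ is countably infinite.

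\emph{Construction.} Since $P$ is countable and lower finite, it is the increasing union of a chain of finite ideals
\[
I_0\subsetneq I_1\subsetneq I_2\subsetneq\cdots, \qquad P=\bigcup_k I_k .
\]
To obtain it, enumerate $P=\{p_0,p_1,\ldots\}$ and let $I_k={\downarrow}(p_0\vee\cdots\vee p_k)$. These are finite principal ideals with tops $t_k=p_0\vee\cdots\vee p_k$, so $t_0\le t_1\le\cdots$. Passing to a subsequence, we may assume the chain is strictly increasing. Add new elements $q_0<q_1<q_2<\cdots$ and let $Q=P\cup\{q_k:k<\omega\}$. Declare $x\le q_k$ iff $x\le t_k$ for $x\in P$; never declare $q_k\le x$; and set $q_j\le q_k$ iff $j\le k$.

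\emph{Verification.} This relation is a partial order, $P$ is a downward closed subset of $Q$, and $P$ is an ideal of $Q$. It is proper because it misses every $q_k$. $Q$ is lower finite, since ${\downarrow}q_k=I_k\cup\{q_0,\ldots,q_k\}$. Joins exist: $x\vee q_k=q_m$ for the least $m\ge k$ with $x\le t_m$, and $q_j\vee q_k=q_{\max(j,k)}$. A lower finite join-semilattice with a least element is a lattice. The lower covers of $q_k$ are $q_{k-1}$ (when $k>0$) together with the maximal elements of $I_k\setminus I_{k-1}$ that are not below $t_{k-1}$. The maximal element of $I_k$ is $t_k$, so the only element of $P$ that can be a lower cover of $q_k$ is $t_k$ itself, and only if $t_k\ne t_{k-1}$, which holds by strictness. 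Thus $q_k$ has at most $2$ lower covers, $t_k$ and $q_{k-1}$, while elements of $P$ keep their lower covers. Hence $Q$ is a $\max(n,2)$-ladder, i.e.\ an $n$-ladder for $n\ge2$.

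\emph{Main obstacle.} The step needing care is the lower-cover count for $q_k$. Any $x\in P$ with $x<q_k$ satisfies $x\le t_k<q_k$, so the cover $x\prec q_k$ forces $x=t_k$. The other possible cover is an element $q_j$, and only $q_{k-1}$ is a cover. The bound of $2$ then follows. Everything else is routine.
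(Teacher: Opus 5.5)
Your proof is correct and is essentially the paper's argument: the paper derives this theorem from Theorem~\ref{thm:characterization} using the cofinal chain $\{p_0\vee\cdots\vee p_k : k\in\omega\}$ of a countable ladder, and the extension built in that theorem's proof (a disjoint copy $C'$ of the cofinal chain, with $x\le c'$ iff $x\le\phi(c')$) is exactly your $Q$, with $q_k$ placed above $t_k$. You carry out that construction directly for a chain, where each new point's lower covers are its predecessor in the copy and its image $t_k$, which gives the bound $2\le n$.
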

Therefore, not only is there a $2$-ladder of cardinality $\aleph_1$, but in fact every maximal $2$-ladder has cardinality $\aleph_1$---recall that, by Theorem~\ref{thm:Ditor}\ref{thm:Ditor-1}, every $2$-ladder has cardinality at most $\aleph_{1}$, and, more generally, every $n$-ladder has cardinality at most $\aleph_{n-1}$.

Ditor's proof of Theorem~\ref{thm:Ditormax} relies crucially on the fact that every countable join-semilattice has a cofinal chain: given a countable $P$, fix an enumeration $\langle x_n : n\in\omega\rangle$ of $P$; then the set $\{\bigvee \{x_m : m \le n\} : n\in\omega\}$ is easily seen to be a cofinal chain. We show that the existence of ``nice" cofinal subsets in an $n$-ladder is indeed key to characterizing the maximality of $n$-ladders.

\begin{theorem}\label{thm:characterization}
For every $n > 1$, an $n$-ladder is maximal if and only if it has no cofinal meet-subsemilattice which is also a $(n-1)$-ladder in the induced order.
\end{theorem}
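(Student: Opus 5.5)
We prove both directions by contraposition. For the direction ``if $L$ has such a subsemilattice then $L$ is not maximal'' we extend $L$ by a copy of $S$ placed on a second floor. For the converse we project the cone above a point outside the ideal back onto the ideal.

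\emph{Step 1: a cofinal $(n-1)$-ladder subsemilattice gives non-maximality.} Let $S\subseteq L$ be a cofinal meet-subsemilattice which is an $(n-1)$-ladder in the induced order. Let
\[
Q \coloneqq (L\times\{0\})\cup(S\times\{1\})\subseteq L\times\{0,1\},
\]
with the product order. The set $L\times\{0\}$ is downward closed in $Q$, and it is a proper ideal. $Q$ is lower finite, since $L$ is.

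\emph{Meets in $Q$.} We have $(x,0)\wedge(s,1)=(x\wedge s,0)$ and $(s,1)\wedge(t,1)=(s\wedge t,1)$; the latter uses that $S$ is closed under meets.

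\emph{Joins in $Q$.} Pairs inside $L\times\{0\}$ join as in $L$. A join involving a second-floor element $(s,1)$ is $(t,1)$, where $t$ is the least element of $S$ above $x\vee s$. This $t$ exists: by cofinality some $u\in S$ lies above $x\vee s$, and the meet of the finitely many elements of $S\cap{\downarrow}u$ that are above $x\vee s$ lies in $S$ and is least.

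\emph{Lower covers in $Q$.} A lower cover of $(x,0)$ is the same as a lower cover of $x$ in $L$, so there are at most $n$. Every element strictly below $(s,1)$ is either $(y,0)$ with $y\le s$, or $(t,1)$ with $t<s$ in $S$. Hence the lower covers of $(s,1)$ are $(s,0)$ together with the $(t,1)$ for $t$ a lower cover of $s$ in $S$. That gives at most $1+(n-1)=n$ lower covers.

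So $Q$ is an $n$-ladder with $L$ isomorphic to a proper ideal, and $L$ is not maximal.

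\emph{Step 2: non-maximality gives a cofinal $(n-1)$-ladder subsemilattice.} Suppose $L$ is (isomorphic to) a proper ideal $I$ of an $n$-ladder $Q$, and fix $q\in Q\setminus I$. Put
\[
S \coloneqq \{\pi_I(x): x\in Q,\ x\ge q\}.
\]

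\emph{Meet-closure.} By Lemma~\ref{lemma:meetproj}, $\pi_I(x)\wedge\pi_I(y)=\pi_I(x\wedge y)$, and $x\wedge y\ge q$, so $S$ is a meet-subsemilattice of $I$.

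\emph{Cofinality.} For $y\in I$ we have $y\le\pi_I(y\vee q)\in S$.

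\emph{$S$ is a lattice.} $S$ is lower finite and has least element $\pi_I(q)$. Any two elements have an upper bound in $S$ by cofinality. The argument of Step 1 then shows that $S$ is a lattice in the induced order.

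\emph{The map $s\mapsto s\vee q$.} If $s=\pi_I(x)$ with $x\ge q$, then $s\le s\vee q\le x$, which gives $\pi_I(s\vee q)=s$. Thus $s\mapsto s\vee q$ is an order-embedding of $S$ into $Q$, with left inverse $\pi_I$.

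\emph{Counting lower covers (the main point).} Fix $s\in S$ other than the bottom. Then $s\vee q\notin I$, so $s<s\vee q$. Hence $s\vee q$ has a lower cover $c_0\ge s$, and $c_0\not\ge q$, since otherwise $c_0\ge s\vee q$.

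For each lower cover $t$ of $s$ in $S$, we have $t\vee q<s\vee q$. Choose a lower cover $c_t$ of $s\vee q$ with $c_t\ge t\vee q$. Then $c_t\ge q$, so $c_t\ne c_0$. Moreover $\pi_I(c_t)\in S$ and $t\le\pi_I(c_t)\le s$. Also $\pi_I(c_t)\ne s$, since otherwise $c_t\ge s\vee q$. As $t$ is a lower cover of $s$ in $S$, this forces $\pi_I(c_t)=t$.

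Consequently $t\mapsto c_t$ is injective, and it maps into the lower covers of $s\vee q$ other than $c_0$. There are at most $n-1$ of these, so $s$ has at most $n-1$ lower covers in $S$. Therefore $S$ is an $(n-1)$-ladder, and it is a cofinal meet-subsemilattice of $I\cong L$.

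I expect this lower-cover count to be the only delicate step. It hinges on finding the extra cover $c_0$ of $s\vee q$ that is not above $q$, together with the identity $\pi_I(c_t)=t$.
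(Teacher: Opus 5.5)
Your proof is correct and follows the paper's argument. Step 1 is the paper's construction (your $S\times\{1\}$ is its disjoint copy $C'$, ordered via $\phi$), and Step 2 uses the same set $\{\pi_I(x) : x \ge q\}$ and the same lower-cover count. The differences are cosmetic: you identify the least element of the fiber $\{z \ge q : \pi_I(z)=s\}$ explicitly as $s\vee q$ and get the covers $c_t$ directly above $t\vee q$, whereas the paper takes that least element abstractly, reaches the covers via meets, and argues by contraposition.
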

\begin{proof}
Fix an $n$-ladder $(L, \le)$ and cofinal meet-subsemilattice $C\subseteq L$ which is an $(n-1)$-ladder in the induced ordering. We want to show that $L$ is not maximal. Let $C'$ be a set such that $|C'| = |C|$ and $C' \cap L = \emptyset$. Fix a bijection $\phi: C' \rightarrow C$. Now we extend the partial order $\le$ on $L \cup C'$ so that $(L \cup C', \le)$ is an $n$-ladder and $L$ is a proper ideal of $(L\cup C', \le)$. 

Extend $\le$ on $L \cup C'$ as follows:
\begin{itemize}
\item for every $x,y \in C'$ let $x \le y$ if and only if $\phi(x) \le \phi(y)$, and
\item for every $x \in L$ and $y \in C'$, let $x \le y$ if and only if $x \le \phi(y)$.
\end{itemize}
It is clear by our construction, and by the transitivity of $\le$ on $L$, that $(L \cup C', \le)$ is a poset and $L$ is an ideal of it. Moreover, given an $x \in C'$, the set of its $\le$-predecessors is \[\{y \in L : y \le \phi(x)\} \cup \{y \in C' : \phi(y) \le \phi(x)\},\] which is finite, since $L$ is lower finite. 

Let us show that $(L \cup C', \le)$ is a join-semilattice. First note that for every $x \in L$ there is a $\le$-least $y \in C$ such that $x \le y$. Indeed, the set $(L \uparrow x) \cap C$ is nonempty, since $C$ is cofinal in $(L, \le)$, and it is clearly a lower finite meet-semilattice in the induced ordering. As every lower finite meet-semilattice has a least element, we conclude that $(L \uparrow x) \cap C$ has a $\le$-least element.

Pick distinct $x,y \in L \cup C'$, towards proving that they have a least upper bound in $(L \cup C', \le)$. Let $\vee$ be the join operation of $L$.  If both $x$ and $y$ belong to $C'$ then their least upper bound is easily seen to be $\phi^{-1}(\min\{z \in C : \phi(x), \phi(y) \le z\})$---where the minimum is taken with respect to $\le$. Now suppose $x, y \in L$. We claim that $x \vee y$ (i.e., the least upper bound of $x$ and $y$ in $(L, \le)$) is the least upper bound of $x$ and $y$ in $(L \cup C', \le)$. Pick $z \in C'$ with $x, y \le z$. By definition of the extension of $\le$, $x,y \le \phi(z)$. Then, $x \vee y \le \phi(z)$, and therefore $x \vee y \le z$, as we wanted to show. Finally, if $x \in L$ and $y \in C'$, then it is easy to see that $\phi^{-1}(\min\{z \in C: x, \phi(y) \le z\})$ is the least upper bound of $x$ and $y$. 

We have shown that $(L \cup C', \le)$ is a lower finite join-semilattice and that $L$ is an ideal of it. Since $(L \cup C', \le)$ has a least element (namely the least element of $(L, \le)$), $(L \cup C', \le)$ is actually a lattice. Therefore, to show that $(L \cup C', \le)$ is an $n$-ladder it suffices to prove that every $x \in C'$ has at most $n$ lower covers. Since by assumption $(C, \le)$ is an $(n-1)$-ladder, let $p_0, p_1, \dots, p_{k-1} \in C$ be the lower covers of $\phi(x)$ in $(C, \le)$ with $k < n$. Then, it is easy to check that the lower covers of $x$ in $(L \cup C', \le)$ are $\phi^{-1}(p_0), \phi^{-1}(p_1), \dots, \phi^{-1}(p_{k-1})$ and $\phi(x)$. 

Now let us suppose that the $n$-ladder $(L, \le)$ is not maximal, towards showing that there exists a cofinal meet-subsemilattice $C \subseteq L$ which is also an $(n-1)$-ladder. Let $(K, \le)$ be an $n$-ladder such that $L$ is a proper ideal of $K$. Fix $b \in K \setminus L$ and consider the set $C \coloneqq \big\{\pi_L(x) : x \in K \uparrow b\big\}$. We claim that the set $C$ satisfies the desired properties. First, it is clearly cofinal. Indeed, for every $x \in L$, $x \le \pi_L(b \vee x)$. Moreover, $\pi_L$ is a meet-homomorphism by Lemma~\ref{lemma:meetproj} and therefore $C$ is a meet-subsemilattice of $L$. From $C$ being a cofinal meet-subsemilattice of $(L, \le)$ it follows quickly that $C$ is also a lattice in the induced ordering: as above, any two elements of $C$ have a least-upper bound in $C$, namely the least element of $C$ above $x \vee y$.

Now, reasoning by contraposition, let us suppose that $C$ is not an $(n-1)$-ladder, towards showing that $K$ is not an $n$-ladder. Fix $x \in C$ and $p_0,p_1,\dots, p_{n-1} \in C$ distinct lower covers of $x$ in $C$. The set $S_x \coloneqq \{z \in K : b \le z \text{ and } \pi_L(z) = x\}$ is nonempty by definition of $C$, and it is a lower finite meet-subsemilattice because $\pi_L$ is a meet-homomorphism. Hence $S_x$ has a least element; let $y$ be that element. We claim that, for every $i < n$, there exists a lower cover $q_i$ of $y$ in $K$ such that $\pi_L(q_i) = p_i$. Fix an $i < n$. Pick some $q \in K$ such that $b \le q$ and $\pi_L(q) = p_i$. Since $\pi_L$ is a meet-homomorphism, $\pi_L(q \wedge y) = \pi_L(q) = p_i$. In particular, there exists some $q \in K$ such that $b \le q < y$ and $\pi_L(q) = p_i$. As $K$ is lower finite, we can pick a lower cover $q'$ of $y$ such that $q \le q'$. Clearly, $p_i =  \pi_L(q) \le \pi_L(q') \le \pi_L(y) = x$. Since $p_i$ is assumed to be a lower cover of $x$ in $C$, either $\pi_L(q') = p_i$ or $\pi_L(q') = x$. But in the latter case, we would contradict the minimality of $y$. Thus, $\pi_L(q') = p_i$, and indeed there exists a lower cover of $y$ whose image via $\pi_L$ is $p_i$. For each $i < n$ pick a lower cover $q_i$ of $y$ such that $\pi_L(q_i) = p_i$. Clearly, $x \not\le q_i$, as otherwise we would have $\pi_L(q_i) = x$. Let $r$ be a lower cover of $y$ such that $x \le r$. This $r$ is distinct from every $q_i$, because $x \not\le q_i$ for all $i < n$. Hence $y$ has at least $n+1$ lower covers, so $K$ is not an $n$-ladder.
\end{proof}

\begin{remark}\label{rmk:cofinalmeet}
As noted in the proof of Theorem~\ref{thm:characterization}, a cofinal meet-subsemilattice of a lower finite lattice is itself a lattice in the induced ordering. In particular, an $n$-ladder $(P, \le) $ is maximal if and only if it has no cofinal meet-subsemilattice $C \subseteq P$ whose elements have at most $n-1$ lower covers in $(C, \le)$.
\end{remark}

Note that Ditor's Theorem~\ref{thm:Ditormax} is a straightforward consequence of Theorem~\ref{thm:characterization}. In fact, every countable $n$-ladder has a cofinal chain, and a cofinal chain is a cofinal meet-subsemilattice which is a $1$-ladder in the induced ordering.

\section{Maximal ladders and Cohen reals}\label{sec:cohen}

In this section we prove Theorem~\ref{thm:main1}. Before proving it, let us recall some results of Wehrung from \cite{MR2609217}, recasting them via the notion of maximality:

\begin{lemma}[{Wehrung, \cite[Lemma 7.5]{MR2609217}}]\label{prop:orwehrung}
Given a lower finite lattice $L$, there exists a forcing of precaliber $\aleph_1$ that forces the existence of a cofinal meet-subsemilattice of $L$ which is a $2$-ladder.
\end{lemma}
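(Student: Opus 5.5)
We have to prove Wehrung's Lemma~\ref{prop:orwehrung}: for a lower finite lattice $L$, some precaliber $\aleph_1$ forcing adds a cofinal meet-subsemilattice $C$ of $L$ that is a $2$-ladder. By Remark~\ref{rmk:cofinalmeet}, $C$ is automatically a lattice in the induced order. So it suffices for every element of $C$ to have at most two lower covers in $C$.

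If $L$ is countable, a cofinal chain exists in the ground model by Ditor's argument, and the trivial forcing works. In general we force with finite approximations. Let $\mathbb{P}$ consist of finite subsets $p \subseteq L$ such that $p$ is closed under $\wedge$ and every $x \in p$ has at most two lower covers in $(p,\le)$. Such a $p$ is then a finite lattice in the induced order, hence a finite $2$-ladder. We would like to order $\mathbb{P}$ by end-extension type: $q \le p$ iff $p \subseteq q$ and $p$ is \emph{downward closed in} $q$, i.e. for $y \in q$ and $x \in p$, $y \le x$ implies $y \in p$. This requirement guarantees that lower covers of old points are never changed later, so in the union $C=\bigcup G$ every element keeps at most two lower covers. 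Meets are also preserved, so $C$ is a meet-subsemilattice.

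\textbf{Density for cofinality.} Given $p$ and $a \in L$, we find $q \le p$ containing some element above $a$. Let $m=\bigvee p \vee a$ (join in $L$) and add $m$ on top. If $m \in p$, then $m=\max p$ is already above $a$ and we are done. Otherwise the points of $p$ below $m$ are all of $p$, and $m$ is new. Its lower covers in $p\cup\{m\}$ are the maximal elements of $p$, and there is exactly one, namely $\bigvee^p p$, since $p$ is a finite lattice. Meets with $m$ stay in $p$, and $p$ is downward closed in $p \cup\{m\}$. So cofinality is easy, and it is exactly why we insisted on a top element: each condition can be capped by a single new point. Genericity over the dense sets $D_a=\{q : \exists x\in q,\ x\ge a\}$ then gives a cofinal $C$.

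\textbf{Precaliber $\aleph_1$.} This is the main obstacle. Given $\{p_\xi:\xi<\omega_1\}$, use the $\Delta$-system lemma and counting to get an uncountable subfamily with root $r$ and isomorphic order-types. We may also assume that the sets ${\downarrow}p_\xi$ form a $\Delta$-system with root $R\supseteq r$; this is possible since $L$ is lower finite, so these sets are finite. We need any finitely many $p_{\xi_1},\dots,p_{\xi_k}$ to have a common extension. The naive union fails on two counts. First, it is not meet-closed, but meets $x\wedge y$ with $x\in p_\xi$, $y\in p_\eta$ lie in ${\downarrow}p_\xi\cap{\downarrow}p_\eta = R$. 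Second, the union with these meets added may break downward closure of $p_\xi$, or create too many lower covers.

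The fix I would try first is to thin out further so that, for $x \in p_\xi$, the element $\pi$-like projection $\bigvee\{z\in R: z\le x\}$ depends only on the isomorphism position of $x$. Then all cross meets are determined uniformly. One should arrange $r$ to be meet-closed with respect to these projections: strengthen conditions so that $\mathbb{P}$ requires $p$ to also contain the projections onto ${\downarrow}$ of its elements, or simply show that cross meets already lie in $r$. After this, amalgamate by taking the union and adding one new top $\bigvee$ of everything, as in the density argument. The delicate point is verifying that the lower-cover count stays $\le 2$ in the amalgam and that each $p_{\xi_i}$ remains downward closed in it. I expect this to require a careful choice of the ordering on $\mathbb{P}$, perhaps weakening end-extension to ``lower covers of old points unchanged''. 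That choice is where the real work, following Wehrung's construction, lies.
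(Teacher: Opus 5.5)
The step you call the main obstacle is not just unfinished. For the forcing you define, precaliber $\aleph_1$ is false, and the repairs you sketch do not rescue it.

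Take the lower finite lattice $L=[\omega_1]^{<\omega}$ (finite subsets of $\omega_1$ under inclusion) and $p_\xi=\{\emptyset,\{0,\xi\}\}$ for $0<\xi<\omega_1$. Any common extension of $p_\xi$ and $p_\eta$ must contain $\{0,\xi\}\cap\{0,\eta\}=\{0\}$. This set lies below $\{0,\xi\}$ but is not in $p_\xi$, so it breaks end-extension and also changes the lower covers of $\{0,\xi\}$. Hence the $p_\xi$ form an uncountable antichain under either ordering you consider, and $\mathbb{P}$ is not even ccc.

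Ordering by plain reverse inclusion does not help either, even though it would still make $\bigcup G$ meet-closed with at most two lower covers per element. For $3\le\xi<\omega_1$ let $p_\xi=\{\emptyset,\{\xi\},\{1,\xi\},\{2,\xi\},\{0,1,2,\xi\}\}$. This family is already as uniform as your $\Delta$-system thinning could make it: same root, same type, same projections onto the root. Yet any finite meet-closed $r\supseteq p_\xi\cup p_\eta$ contains $w=\{0,1,2\}$. This $w$ is a lower cover of $\{0,1,2,\xi\}$ in $r$ lying above neither $\{1,\xi\}$ nor $\{2,\xi\}$, so the only possible second lower cover is $\{1,2,\xi\}$. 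But then $\{1,2\}=\{1,2,\xi\}\cap w\in r$, which gives $\{1,2,\xi\}$ three lower covers in $r$.

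Finite pieces of a $2$-ladder do not amalgamate unless the conditions already anticipate all future meets. The ``projections onto $R$'' you propose cannot be built into $\mathbb{P}$, since $R$ depends on the family being thinned. A smaller point: you never require conditions to have a greatest element, and a finite meet-closed set need not be a lattice, so your capping argument for cofinality needs that clause added.

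What is missing is a fixed global structure on $L$, and it needs a hypothesis your argument never uses: $|L|\le\aleph_1$. This is implicit in the lemma (the paper applies it with $|L|=\aleph_1$) and cannot be dropped. A precaliber $\aleph_1$ forcing preserves cardinals, a $2$-ladder has size at most $\aleph_1$ by Theorem~\ref{thm:Ditor}, and an infinite lower finite poset has the same size as any cofinal subset.

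The argument behind the lemma is Wehrung's, reproduced as the case $n=1$ of Proposition~\ref{prop:wehrung} (cf.\ Remark~\ref{rmk:Sk}). It proceeds as follows.
\begin{enumerate}[label=(\roman*)]
\item Write $L$ as a continuous increasing union of countable ideals $I_\alpha$, $\alpha<\omega_1$.
\item Split $L$ into the countable layers $J_\alpha=I_\alpha\setminus\bigcup_{\beta<\alpha}I_\beta$ ($\alpha$ nonlimit).
\item Build $C$ so that each $C\cap J_\alpha$ is a chain and $C$ is closed under all projections $\pi_{I_\gamma}$.
\end{enumerate}
Lemma~\ref{lemma:meetproj} then gives meet-closure: the meet of $x,y\in C$ is the meet of their projections into the layer of $x\wedge y$, and these are comparable elements of $C$. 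The only possible lower covers in $C$ of $x\in C\cap J_\alpha$ are its predecessor in that chain and, if $\alpha$ is a successor, $\pi_{I_{\alpha-1}}(x)$.

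Projection-closure is imposed in the definition of $C_p$, not on the conditions. So the conditions are just finite partial functions choosing successive chain elements in each layer, and the forcing is $\text{Add}(\omega,\omega_1)$. Its precaliber $\aleph_1$ is the routine $\Delta$-system argument, and genericity yields cofinality.
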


The following theorem, due to Wehrung, follows from the lemma above. 
\begin{theorem}[{Wehrung, \cite[Theorem 7.9]{MR2609217}}]\label{thm:orwherung}
If $\mathsf{MA}_{\omega_1}(\aleph_1\text{-precaliber})$ holds, then there exists a $3$-ladder of cardinality $\aleph_2$.
\end{theorem}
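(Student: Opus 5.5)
We will argue by contradiction, reducing to the characterization in Theorem~\ref{thm:characterization}. Assume $\mathsf{MA}_{\omega_1}(\aleph_1\text{-precaliber})$, and fix any $3$-ladder $L$; since every $n$-ladder extends to a maximal one, we may take $L$ to be maximal. The goal is to show $|L| = \aleph_2$. By Theorem~\ref{thm:Ditor}\ref{thm:Ditor-1}, $|L| \le \aleph_2$, so we have to exclude $|L| \le \aleph_1$. By Theorem~\ref{thm:Ditormax}, $L$ is uncountable, so the only case to rule out is $|L| = \aleph_1$.

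So suppose $|L| = \aleph_1$. Let $\mathbb{P}$ be the precaliber-$\aleph_1$ forcing given by Lemma~\ref{prop:orwehrung}; it forces the existence of a cofinal meet-subsemilattice of $L$ that is a $2$-ladder. We want to obtain such a subsemilattice already in $V$, by applying $\mathsf{MA}_{\omega_1}$ to $\mathbb{P}$ with $\aleph_1$ dense sets. The natural approach is to look at how Wehrung's forcing works: its conditions are finite approximations $p$ to a subset $C_p \subseteq L$, together with side information ensuring that the lower covers, computed inside the eventual $C$, of the elements already committed are fixed. For each $x \in L$ we then take the dense set $D_x$ of conditions that put some element $\ge x$ into $C$, and for each pair $c,d \in L$ the dense set $E_{c,d}$ of conditions deciding whether $c,d \in C$ and, if they are, also putting $c \wedge d$ into $C$. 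Since $|L| = \aleph_1$, this is a family of $\aleph_1$ dense sets. A filter $G$ meeting all of them yields $C = \bigcup_{p\in G} C_p$, which is cofinal, closed under meets, and in which every element has at most $2$ lower covers. The last property is the one that must be ensured by the way the conditions are designed, so that it transfers from finite stages to the union.

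We also need an absoluteness fact: the properties ``cofinal'', ``meet-subsemilattice'' and ``$2$-ladder in the induced order'' are all absolute, because $L$ is lower finite and lower covers are computed within finite sets. Hence a $G$ that is merely sufficiently generic is enough, and full genericity is not required. Once $C$ is in hand, Theorem~\ref{thm:characterization} with $n=3$ shows that $L$ is not maximal, which is a contradiction. Thus $|L| = \aleph_2$; that is, every maximal $3$-ladder has cardinality $\aleph_2$, and one exists.

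The main obstacle is that Lemma~\ref{prop:orwehrung} is stated only as a black box ``forces existence''. In general, $\mathsf{MA}_{\omega_1}$ only lets us decide $\aleph_1$ many dense conditions, so it is not automatic that the name for the witness can be captured by $\aleph_1$ dense sets. If the explicit form of the forcing is not available, we would instead fix a $\mathbb{P}$-name $\dot C$ and use, for each pair $x,y \in L$ and each finite configuration, the dense sets of conditions deciding the statements ``$x \in \dot C$'', ``the least element of $\dot C$ above $x$ is $z$'', and ``the lower covers of $z$ in $\dot C$ are $F$''. These are $\aleph_1$ many dense sets in total. We would then define $C$ from $G$ via the decided values and check consistency, using the fact that $G$ is a filter. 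Verifying that the decided lower-cover information is coherent, i.e. that the lower covers of $z$ in the resulting $C$ are exactly the decided $F$, is the delicate point. It should follow because each $\downarrow z$ is finite, so that membership of all of $\downarrow z$ in $\dot C$ can be decided by a single condition in $G$.
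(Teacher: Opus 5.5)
Your fallback argument in the last paragraph is essentially the paper's proof. The paper fixes a $\mathbb{P}$-name $\dot{C}$ and, for each $x \in L$, uses the dense set $D_x$ of conditions that force some $y \ge x$ into $\dot{C}$ and decide the finite set $\dot{C} \cap {\downarrow} y$; it then sets $C' = \{x : \exists p \in G\ (p \Vdash x \in \dot{C})\}$ and uses the filter property, exactly as you indicate, to conclude via Theorem~\ref{thm:characterization}. The first sketch through the internal structure of Wehrung's forcing is unnecessary. Also, what lets a merely $\aleph_1$-generic filter suffice is this locality (each requirement is decided by one condition in $G$), not absoluteness.
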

\begin{proof}
We show something stronger: under $\mathsf{MA}_{\omega_1}(\aleph_1\text{-precaliber})$ there is no maximal $n$-ladder, with $n > 2$, of cardinality $\aleph_1$. This assertion is stronger because it implies, in particular, that every maximal $3$-ladder has cardinality $\aleph_2$. Indeed, by Ditor's Theorem~\ref{thm:Ditormax}, there are no countable maximal $3$-ladders, and therefore if maximal $3$-ladders cannot have cardinality $\aleph_1$, they all must have cardinality at least $\aleph_2$. But this actually implies that every maximal $3$-ladder has cardinality precisely $\aleph_2$, since $3$-ladders have cardinality at most $\aleph_2$ by Ditor's Theorem~\ref{thm:Ditor}\ref{thm:Ditor-1}.

Let $L$ be an $n$-ladder of cardinality $\aleph_1$ for some $n > 2$, towards showing that it is not maximal. By Lemma~\ref{prop:orwehrung}, there is an $\aleph_1$-precaliber forcing $\mathbb{P}$ and a $\mathbb{P}$-name $\dot{C}$ such that 
\[
\Vdash \dot{C} \text{ is a cofinal meet-subsemilattice of } L \text{ and it is a } 2\text{-ladder}. 
\]

For every $x \in L$, let 
\begin{multline*}
D_x \coloneqq \big\{p \in \mathbb{P} : \exists y \in L \ \big( x \le y \text{ and } p \Vdash ``y \in \dot{C}" \text{ and} \\p \text{ decides the (finite) set } \dot{C} \cap ({\downarrow} y)\big)\big\}.
\end{multline*}
Since $\mathbb{P}$ forces $\dot{C}$ to be cofinal, each $D_x$ is dense open. By $\mathsf{MA}_{\omega_1}(\aleph_1\text{-precaliber})$ we can fix a filter $G \subseteq \mathbb{P}$ such that $D_x \cap G \neq \emptyset$ for every $x \in L$. Let
\[
C' \coloneqq \big\{x \in L : \exists p \in G \ (p \Vdash x \in \dot{C})\big\}.
\]

We claim that $C'$ is a cofinal meet-subsemilattice of $L$ which is a $2$-ladder. The cofinality of $C'$ directly follows from $G$ intersecting every $D_x$. Now fix  $x \in C'$, and $p \in G \cap D_x$. It quickly follows that $p$ forces $C' \cap ({\downarrow} x) = \dot{C} \cap ({\downarrow} x)$. Moreover, since $p$ forces $\dot{C}$ to be a $2$-ladder, $x$ must have at most $2$ lower covers in $C'$.

 Finally, fix $x, y \in C'$. Let $p, q \in G$ such that $p$ forces $x \in \dot{C}$ and $q$ forces $y \in \dot{C}$. Let $r\in G$ be a lower bound of $p$ and $q$. Since $r$ forces both $x$ and $y$ to belong to $\dot{C}$ and forces $\dot{C}$ to be a meet-subsemilattice of $L$, it must also force $x \wedge y \in \dot{C}$. Thus, $x \wedge y \in C'$.

We conclude that $C'$ is a cofinal meet-subsemilattice of $L$ which is a $2$-ladder, and therefore $L$ is not maximal by Theorem~\ref{thm:characterization}.
\end{proof}

So Wehrung not only proved that, under $\mathsf{MA}_{\omega_1}(\aleph_1\text{-precaliber})$, there is a $3$-ladder of cardinality $\aleph_2$, but actually that every maximal $n$-ladder with $n > 2$ has cardinality at least $\aleph_2$ (cf. Theorem~\ref{thm:Ditormax}).

Our next proposition generalizes Wehrung's Lemma~\ref{prop:orwehrung} to higher cardinals (see Remark~\ref{rmk:Sk}). It is key to the proof of Theorem~\ref{thm:main1}. 


\begin{proposition}\label{prop:wehrung}
Given some $n > 0$ and a lower finite lattice $L$ of cardinality $\aleph_n$, $\text{\upshape Add}(\omega, \omega_n)$ forces the existence of a cofinal meet-subsemilattice of $L$ which is an $(n+1)$-ladder.
\end{proposition}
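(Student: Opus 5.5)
We argue by induction on $n$, following the pattern of Theorem~\ref{thm:characterization}, where one extra lower cover is paid for by ``going up one level''. The idea is to cut $L$ into a continuous chain of small ideals, use the induction hypothesis on each piece, and glue the resulting $n$-ladders into a single $(n+1)$-ladder along the chain. Fix a continuous increasing chain $\langle I_\alpha : \alpha<\omega_n\rangle$ of ideals of $L$ with $|I_\alpha|\le\aleph_{n-1}$ and $L=\bigcup_\alpha I_\alpha$. Such a chain exists: take $I_\alpha = L\cap M_\alpha$ for a continuous chain of elementary submodels $M_\alpha$ of size $\aleph_{n-1}$. Since $L$ is lower finite, $M_\alpha\cap L$ is downward closed, and it is join-closed, so it is an ideal. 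By Lemma~\ref{lemma:meetproj} and Lemma~\ref{lemma:concproj}, the projections $\pi_{I_\alpha}$ are meet-homomorphisms and commute coherently: $\pi_{I_\alpha}\circ\pi_{I_\beta}=\pi_{I_\alpha}$ for $\alpha\le\beta$.

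The target object is a cofinal meet-subsemilattice $C\subseteq L$ with two properties. First, each $C_\alpha := C\cap I_\alpha$ is cofinal in $I_\alpha$. Second, $C$ is closed under the projections, i.e.\ $\pi_{I_\alpha}[C]\subseteq C_\alpha$. Given this, the lower covers in $C$ of an element $c$ are bounded as follows. Let $\alpha$ be least with $c\in I_{\alpha+1}$. If $c\notin I_\alpha$, the lower covers of $c$ consist of $\pi_{I_\alpha}(c)$ (when it differs from $c$) together with the covers coming from a ``fibre structure'' over $I_\alpha$. That fibre structure is exactly where the induction hypothesis enters: it should be at most $n$ covers provided $C_{\alpha+1}$ restricted to the fibres is an $n$-ladder. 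This mirrors the computation of lower covers of $x\in C'$ in the first half of the proof of Theorem~\ref{thm:characterization}, where the covers were $\phi^{-1}(p_i)$ together with $\phi(x)$.

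Concretely, factor $\text{Add}(\omega,\omega_n)$ as a finite-support product of copies of $\text{Add}(\omega,\omega_{n-1})$ indexed by $\alpha<\omega_n$, and build $C_{\alpha+1}$ from $C_\alpha$ in the extension by the first $\alpha+1$ factors. At successor steps, apply the induction hypothesis, on the $\alpha$-th factor, to the lower finite lattice obtained by collapsing $I_{\alpha+1}$ over $C_\alpha$. This is essentially the lattice of pairs $(\pi_{I_\alpha}(x), x)$, of size $\le\aleph_{n-1}$. The hypothesis yields a cofinal meet-subsemilattice that is an $n$-ladder, which we pull back. For the base case $n=1$, the same scheme runs with countable ideals $I_\alpha$. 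There a cofinal chain exists in each piece by the enumeration argument used for Theorem~\ref{thm:Ditormax}, and a Cohen real is used to choose it generically; this is the analogue of Wehrung's Lemma~\ref{prop:orwehrung}. Because all factors are ccc, $\omega_n$ and the cardinals below are preserved. Moreover, $L$ and the chain $\langle I_\alpha\rangle$ are ground-model objects, so the same filtration can be used throughout.

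The main obstacle is the limit stages $\alpha$ of cofinality $\omega$ (and, for $n\ge2$, cofinalities below $\omega_{n-1}$). There $C_\alpha=\bigcup_{\beta<\alpha}C_\beta$ is automatically a meet-subsemilattice closed under the earlier projections. However, the construction at stage $\alpha+1$ needs $C_\alpha$ to be cofinal in $I_\alpha$ and ``coherent'': for every $x\in I_{\alpha+1}$ the relevant minimum $\min\{c\in C_\alpha : c\ge \pi_{I_\alpha}(x)\}$ must behave, and the fibres must not inherit extra covers from infinitely many earlier stages. A ground-model recursion can fail here, since a $\diamondsuit$-like ladder could kill it, which is what Theorem~\ref{thm:main2} exploits. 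The plan is to make every choice at stage $\beta$ depend on fresh Cohen coordinates. A density argument, using that any condition mentions only finitely many coordinates, then shows that the choices made along any cofinal $\omega$-sequence of stages below $\alpha$ are mutually generic, and hence cohere. I expect proving this coherence lemma to be the technical heart of the argument.
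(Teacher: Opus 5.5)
\paragraph{The gap: extendability of the partial subsemilattices $C_\alpha$.}
Your cover count is fine once $C$ is a meet-subsemilattice closed under every $\pi_{I_\beta}$. Then for $c\in C\cap(I_{\alpha+1}\setminus I_\alpha)$, the only possible lower cover of $c$ inside $I_\alpha$ is $\pi_{I_\alpha}(c)$, and the remaining lower covers are lower covers inside the layer $C\cap(I_{\alpha+1}\setminus I_\alpha)$. So no extra covers are ``inherited'' from earlier stages.

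The real problem is building such a $C$ stage by stage. Since $C\cap I_\alpha=C_\alpha$ is fixed at stage $\alpha$ and closure demands $\pi_{I_\alpha}[C]\subseteq C_\alpha$, each $C_\alpha$ must be \emph{extendable}: for every $y\in L$ there is $y'\ge y$ with $\pi_{I_\alpha}(y')\in C_\alpha$. This does not follow from $C_\alpha$ being a cofinal meet-subsemilattice of $I_\alpha$. For example, let $I_\alpha=\omega\times\omega$ and $I_{\alpha+1}=I_\alpha\cup\{t_k:k\in\omega\}$ with $t_0<t_1<\cdots$ and ${\downarrow}t_k\cap I_\alpha={\downarrow}(k,k)$. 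Then the cofinal chain $\{(0,0)\}\cup\{(k,k+1):k\in\omega\}$ has no cofinal extension to $I_{\alpha+1}$ that is closed under $\pi_{I_\alpha}$, since only $t_0$ projects into it.

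This breaks your plan at both kinds of stage:
\begin{itemize}
\item \emph{Successor stages.} Your induction hypothesis only says that \emph{some} cofinal $n$-ladder of the fibre lattice exists in the extension, with no control over which elements it contains. So nothing ensures that $C_{\alpha+1}$ meets each of the sets $\{\pi_{I_{\alpha+1}}(y'):y'\ge y\}$ for the $\aleph_n$ many $y\in L$.
\item \emph{Limit stages $\lambda$, of any cofinality.} Extendability of $C_\lambda=\bigcup_{\beta<\lambda}C_\beta$ needs, for each $y$, a single $y'\ge y$ such that $\pi_{I_\lambda}(y')$ and its finitely many intermediate projections were each actually chosen at the corresponding stages.
\end{itemize}
Mutual genericity of the factors of a product is automatic and gives none of this. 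You would need a density-type strengthening of the proposition, roughly: every condition can be extended to force into $\dot C$ an element above any prescribed one, together with its projections. You would then have to prove that the gluing preserves this strengthening. That is the whole technical content, and the proposal only names it as an expected ``coherence lemma''. Relatedly, the ``lattice of pairs $(\pi_{I_\alpha}(x),x)$'' is just a copy of $I_{\alpha+1}$. The natural fibre lattice is $C_\alpha\cup\{x\in I_{\alpha+1}\setminus I_\alpha:\pi_{I_\alpha}(x)\in C_\alpha\}$, and it is cofinal in $I_{\alpha+1}$ only when $C_\alpha$ is extendable.

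\paragraph{How the paper avoids this.}
The paper does not induct on $n$; it puts all the coherence into a ground-model decomposition and then uses a single Cohen forcing.
\begin{itemize}
\item Claim~\ref{prop:wehrung:claim:1} builds ideals $I_{\vec\alpha}$ indexed by a tree of height $n$, with $\alpha_i<\omega_{n-i}$. These partition $L$ into countable pieces $J_{\vec\alpha}$.
\item Property (6) of that claim gives $x\le y\Rightarrow I_{\vec\alpha(x)}\subseteq I_{\vec\alpha(y)}$, which is exactly what makes the projections compose as in \eqref{eq:finalcohen}.
\item $\text{Add}(\omega,\omega_n)$ then adds a generic chain through each $J_{\vec\alpha}$, admitting an element only if all its earlier projections are already admitted (clause (c)).
\item The $n+1$ lower covers are the chain predecessor together with the projections $\pi_{(\vec\alpha\upharpoonright i)^\smallfrown(\alpha_i-1)}(x)$.
\item Cofinality, which is your extendability at all stages simultaneously, is a single density argument: it places $\pi_{\vec\alpha}(y)$ at fresh slots for the finitely many $\vec\alpha\in\{\vec\alpha(z):z\le y\}$.
\end{itemize}
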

\begin{proof}
Fix a lower finite lattice $(L, \le)$ of cardinality $\aleph_n$. We first partition $L$ into countable pieces indexed by certain terminal nodes of a tree. Then, using this partition, we show that $\text{Add}(\omega, \omega_n)$ forces the existence of a cofinal meet-subsemilattice of $L$ which is an $(n+1)$-ladder in the induced ordering. 

Let
\begin{multline*}
T \coloneqq \big\{\emptyset\} \cup \{\langle\alpha_0, \alpha_1, \dots, \alpha_{m}\rangle : m < n \text{ and } \alpha_i < \omega_{n-i} \text{ for every } i \le m \text{ and } \\ \alpha_i \text{ is not a limit ordinal for every } i < m\big\}.
\end{multline*}
Given $\vec{\alpha}, \vec{\beta} \in T$, we say that $\vec{\beta}$ \emph{extends} $\vec{\alpha}$, in symbols $\vec{\alpha} \subseteq \vec{\beta}$, when $\vec{\beta} \upharpoonright |\vec{\alpha}| = \vec{\alpha}$. Moreover, we write $\vec{\alpha} <_{lex}^* \vec{\beta}$ to mean that $\vec{\alpha} <_{lex} \vec{\beta}$ and $\vec{\alpha} \nsubseteq \vec{\beta}$.

We call $\vec{\alpha} \in T$ \emph{nonlimit} if its last element is not a limit ordinal. Moreover, we call $\vec{\alpha} \in T$ \emph{maximal} if either it has length $n$ or its last element is a limit ordinal---i.e., if it has no proper extension in $T$.

\begin{claim}\label{prop:wehrung:claim:1}
There is a family $\langle I_{\vec{\alpha}} : \vec{\alpha} \in T\rangle$ of ideals  of $L$ such that:
\begin{enumerate}[label={\upshape (\arabic*)}]
\item $I_{\emptyset} = L$,
\item $I_{\vec{\alpha}} = \bigcup_{\beta < \omega_{n-|\vec{\alpha}|}} I_{\vec{\alpha}^\smallfrown \beta}$ if $\vec{\alpha}$ is not maximal,
\item $I_{\vec{\alpha}^\smallfrown \beta} \subseteq I_{\vec{\alpha}^\smallfrown \gamma}$ for every $\beta \le \gamma < \omega_{n-|\vec{\alpha}|}$ if $\vec{\alpha}$ is not maximal,
\item $I_{\vec{\alpha}^\smallfrown \lambda} = \bigcup_{\beta < \lambda} I_{\vec{\alpha}^\smallfrown \beta}$ if $\lambda$ is a limit ordinal and $\vec{\alpha}$ is not maximal,
\item $I_{\vec{\alpha}} \nsubseteq \bigcup_{\vec{\beta} <_{lex}^* \vec{\alpha}} I_{\vec{\beta}}$ if $\vec{\alpha}$ is nonlimit,
\item $I_{\vec{\alpha}} \supseteq I_{\vec{\gamma}}$ for every nonlimit $\vec{\gamma} \in T$ of length $n$ such that $\vec{\gamma} <_{lex} \vec{\alpha}$ and $(I_{\vec{\alpha}} \cap I_{\vec{\gamma}}) \setminus \bigcup_{\vec{\beta} <_{lex}^* \vec{\gamma}} I_{\vec{\beta}} \neq \emptyset$,
\item $|I_{\vec{\alpha}}| = \aleph_{n-|\vec{\alpha}|}$.
\end{enumerate}
\end{claim}
\begin{proof}
We define $I_{\vec{\alpha}}$ by induction on the lexicographic ordering of the $\vec{\alpha}$s in $T$. As soon as we define $I_{\vec{\alpha}}$ for some $\vec{\alpha} \in T$ we fix an enumeration $\langle x_{\vec{\alpha}}(\delta) : \delta < \omega_{n-|\vec{\alpha}|} \rangle$ of $I_{\vec{\alpha}}$. 

First let $I_{\emptyset} = L$. Fix some nonempty $\vec{\alpha} \in T$ and let $k < n$ be such that $k+1 = |\vec{\alpha}|$. Suppose that we defined $I_{\vec{\beta}}$ for all $\vec{\beta}$ in $T$ with $\vec{\beta} <_{lex} \vec{\alpha}$ such that (3)-(7) hold below $\vec{\alpha}$---i.e., the statements (3)-(7) hold when the elements of $T$ mentioned are $<_{lex} \vec{\alpha}$. Moreover, we also assume that the following weakening of (2) holds:
\begin{enumerate}
\item[(2')] $I_{\vec{\beta}} \supseteq I_{\vec{\gamma}}$ for all $\vec{\beta},\vec{\gamma} <_{lex} \vec{\alpha}$ such that $\vec{\gamma}$ extends $\vec{\beta}$.
\end{enumerate} 
We now define $I_{\vec{\alpha}}$. If $\alpha_k$ is a limit ordinal, let $I_{\vec{\alpha}} = \bigcup_{\beta < \alpha_k} I_{(\vec{\alpha} \upharpoonright k)^\smallfrown \beta}$. Now suppose that $\alpha_k$ is not a limit ordinal. First let us fix a subset $X \subseteq I_{\vec{\alpha} \upharpoonright k}$ depending on the value of $\alpha_k$:\vspace{0.5em}

\noindent \underline{$\alpha_k = 0$}:  for every $\vec{\beta}$, $\vec{\beta} <^*_{lex} \vec{\alpha}$ if and only if $\vec{\beta} <^*_{lex} \vec{\alpha} \upharpoonright k$. By (5) and (7), $I_{\vec{\alpha} \upharpoonright k} \nsubseteq \bigcup_{\vec{\beta} <^*_{lex} \vec{\alpha}}I_{\vec{\beta}}$, and $|I_{\vec{\alpha} \upharpoonright k}| = \aleph_{n-k}$. Thus, we can pick $X\subseteq I_{\vec{\alpha} \upharpoonright k}$ such that $|X| = \aleph_{n-k-1}$ and  $X \nsubseteq \bigcup_{\vec{\beta} <^*_{lex} \vec{\alpha}} I_{\vec{\beta}}$. \vspace{0.5em}

\noindent \underline{$\alpha_k > 0$}: it follows that 
\[
\{\vec{\beta} \in T : \vec{\beta} <^*_{lex} \vec{\alpha}\} = \{(\vec{\alpha} \upharpoonright k)^\smallfrown \gamma : \gamma < \alpha_k\} \cup \{\vec{\beta} \in T : \vec{\beta} <^*_{lex} \vec{\alpha}\upharpoonright k\}.
\]
 By (3), $I_{(\vec{\alpha} \upharpoonright k)^\smallfrown \gamma} \subseteq I_{(\vec{\alpha} \upharpoonright k)^\smallfrown (\alpha_k-1)}$ for every $\gamma < \alpha_k$. By (2') and (7), $I_{\vec{\alpha} \upharpoonright k} \supseteq I_{(\vec{\alpha} \upharpoonright k)^\smallfrown (\alpha_k - 1)}$ and $|I_{\vec{\alpha} \upharpoonright k}| > |I_{(\vec{\alpha} \upharpoonright k)^\smallfrown (\alpha_k - 1)}|$. Moreover, by (5), $I_{\vec{\alpha} \upharpoonright k} \nsubseteq \bigcup_{\vec{\beta} <^*_{lex} \vec{\alpha} \upharpoonright k} I_{\vec{\beta}}$. The set $I_{\vec{\alpha} \upharpoonright k} \setminus \bigcup_{\vec{\beta} <^*_{lex} \vec{\alpha} \upharpoonright k} I_{\vec{\beta}}$, being nonempty and upward closed, must have the same cardinality as $I_{\vec{\alpha} \upharpoonright k}$, that is $\aleph_{n-k}$. Indeed, in general, an infinite lower finite poset has the same cardinality as all its cofinal subsets. Thus, we can fix some $z$ belonging to the set
\[
I_{\vec{\alpha} \upharpoonright k} \setminus \Big ( I_{(\vec{\alpha} \upharpoonright k)^\smallfrown (\alpha_k - 1)} \cup \bigcup_{\vec{\beta} <^*_{lex} \vec{\alpha} \upharpoonright k} I_{\vec{\beta}} \Big).
\] 
Now, let $\delta < \omega_{n-k}$ be the least ordinal such that $
x_{\vec{\alpha} \upharpoonright k}(\delta) \not\in I_{(\vec{\alpha} \upharpoonright k)^\smallfrown (\alpha_k - 1)}$.
Set $X = I_{(\vec{\alpha} \upharpoonright k)^\smallfrown (\alpha_k - 1)} \cup \{z, x_{\vec{\alpha} \upharpoonright k}(\delta)\}$.\vspace{0.5em}

 Now that we have fixed $X$, let us define a $\subseteq$-increasing sequence $\langle F_m : m\in\omega\rangle$ of ideals of $L$ as follows: first let $F_0 = \mathrm{id}(X)$,  that is, $F_0$ is the ideal generated by $X$; now, if $F_m$ is defined, let $R_m$ be the set of all those $\vec{\gamma} \in T$ of length $n$ such that $\vec{\gamma} <_{lex} \vec{\alpha}$ and $(F_m \cap I_{\vec{\gamma}}) \setminus \bigcup_{\vec{\beta} <^*_{lex} \vec{\gamma}} I_{\vec{\beta}} \neq \emptyset$; let $F_{m+1} = \mathrm{id}(F_m \cup \bigcup_{\vec{\gamma} \in R_m} I_{\vec{\gamma}})$. Finally, let $I_{\vec{\alpha}} = \bigcup_{m\in\omega} F_m$. 
 
This ends the definition of $I_{\vec{\alpha}}$. Let us check that (2') and (3)-(7) still hold when all the members of $T$ mentioned are $\le_{lex} \vec{\alpha}$. Properties (2') and (3)-(6) follow directly from the construction of $I_{\vec{\alpha}}$. 

We are left to prove that (7) holds. It suffices to show $|I_{\vec{\alpha}}| = \aleph_{n-|\vec{\alpha}|} = \aleph_{n-k-1}$. If $\alpha_k$ is a limit ordinal, then it follows directly that $|I_{\vec{\alpha}}| = |\alpha_k| \cdot \sup_{\beta < \alpha_k} |I_{(\vec{\alpha} \upharpoonright k)^\smallfrown \beta}| = |\alpha_k| \cdot \aleph_{n-k-1}$, but since $\alpha_k < \omega_{n-k}$, we conclude $|I_{\vec{\alpha}}| = \aleph_{n-k-1}$. Now suppose that $\alpha_k$ is not a limit ordinal. It suffices to prove that $|F_m| = \aleph_{n-k-1}$ for each $m \in \omega$. Since $|X| = \aleph_{n-k-1}$ by choice and $L$ is lower finite, it follows that $|F_0| = \aleph_{n-k-1}$. Now, if $|F_m| = \aleph_{n-k-1}$, in order to prove that $|F_{m+1}| = |F_m|$ it suffices to show that $|R_m| \le |F_m|$. Indeed, it is clear that for each $x \in L$ there exists at most one $\vec{\gamma} \in T$ of length $n$ such that $x \in I_{\vec{\gamma}} \setminus \bigcup_{\vec{\beta} <^*_{lex} \vec{\gamma}} I_{\vec{\beta}}$. Then, it follows directly that $|R_m| \le |F_m|$. 

This ends the inductive definition of the $I_{\vec{\alpha}}$s. Property (1) holds by construction and we have shown that (3)-(7) also hold. Regarding (2), note that it holds by the way we chose $\delta$ in the case where $\alpha_k$ is a successor ordinal. 
\end{proof}

Fix a family satisfying the statement of Claim~\ref{prop:wehrung:claim:1}.
Given some $\vec{\alpha} \in T$, for notational clarity let us denote $\pi_{I_{\vec{\alpha}}}$ by $\pi_{\vec{\alpha}}$. Let $T^s(n)$ be the set of all the nonlimit elements of $T$ which have length $n$. For each $\vec{\alpha} \in T^s(n)$, let
\[
J_{\vec{\alpha}} \coloneqq I_{\vec{\alpha}} \setminus \bigcup_{\vec{\beta} <^*_{lex} \vec{\alpha}}I_{\vec{\beta}}.
\]
By (5) and (7), the $J_{\vec{\alpha}}$s are nonempty, countable, and pairwise disjoint.

By (1) and (2), for every $x \in L$ there exists $\vec{\alpha} \in T$ of length $n$ such that $x \in I_{\vec{\alpha}}$. Let $\vec{\alpha}(x)$ be the $\le_{lex}$-least $\vec{\alpha} \in T$ of length $n$ such that $x \in I_{\vec{\alpha}}$. By (4), $\vec{\alpha}(x) \in T^s(n)$. Also,  clearly $x \in J_{\vec{\alpha}(x)}$. In particular, we have
\begin{equation}\label{eq:wehrung}
L = \bigcup \big\{J_{\vec{\alpha}} : \vec{\alpha} \in T^s(n)\big\},
\end{equation}
that is to say, the $J_{\vec{\alpha}}$s form a partition of $L$.

Notice that for every $x,y \in L$, if $x \le y$, then $\vec{\alpha}(x) \le_{lex}\vec{\alpha}(y)$. Finally, observe that, by (6), for every $x,y \in L$, if $x \le y$, then $I_{\vec{\alpha}(x)} \subseteq I_{\vec{\alpha}(y)}$.

Consider the forcing notion $\mathbb{P}$ defined as
\[
\mathbb{P} \coloneqq \big\{p:D \rightarrow L : D \subseteq T^s(n) \times \omega \text{ is finite and } \forall (\vec{\alpha}, m) \in D \ (p(\vec{\alpha}, m) \in J_{\vec{\alpha}})\big\},
\]
and ordered by reverse extension: $q \le p$ if $\mathrm{dom}(q) \supseteq \mathrm{dom}(p)$ and $q \upharpoonright \mathrm{dom}(p) = p$. Since $T^s(n)$ has cardinality $\aleph_n$ and each $J_{\vec{\alpha}}$ is countable, it is easy to see that $\mathbb{P} \cong \text{Add}(\omega, \omega_n)$, i.e., $\mathbb{P}$ is isomorphic to Cohen's forcing $\text{Add}(\omega, \omega_n)$.

For each $p \in \mathbb{P}$, we now define a subset $C_p \subseteq L$. Fix $p \in \mathbb{P}$. By \eqref{eq:wehrung}, it suffices to define $C_p \cap J_{\vec{\alpha}}$ for each  $\vec{\alpha} \in T^s(n)$. We do so by induction on the lexicographic ordering of $T^s(n)$. 

Fix some $\vec{\alpha} \in T^s(n)$ and suppose that $C_p \cap J_{\vec{\beta}}$ is defined for every $\vec{\beta} \in T^s(n)$ with $\vec{\beta} <_{lex} \vec{\alpha}$, towards defining $C_p \cap J_{\vec{\alpha}}$. For every $x \in J_{\vec{\alpha}}$, we let $x \in C_p$ if and only if there exists some $m \in \omega$ such that:
\begin{enumerate}[label=(\alph*)]
\item $p (\vec{\alpha}, m) = x$, and
\item for all $k < m$,  $(\vec{\alpha}, k) \in \mathrm{dom}(p)$ and $p(\vec{\alpha}, k) \le x$, and
\item for all $\vec{\gamma} \in T$ such that $\vec{\gamma} <_{lex}^* \vec{\alpha}$ ,  $\pi_{\vec{\gamma}}(x) \in C_p$.
\end{enumerate}
Note that if $p,q \in \mathbb{P}$ and $p \le q$, then $C_q \subseteq C_p$. Now, given any filter $G \subseteq \mathbb{P}$, let  $C_G \coloneqq \bigcup_{p \in G} C_p$. We first claim that, for any filter $G$, $C_G$ is a meet-subsemilattice of $L$ whose elements have at most $n+1$ lower covers.

\begin{claim}
Let $G\subseteq \mathbb{P}$ be a filter. Then, every element of $C_G$ has at most $n+1$ lower covers in $C_G$.
\end{claim}
\begin{proof}
Pick $x \in C_G$ and let $\vec{\alpha} = \vec{\alpha}(x)$. It suffices to prove that there exists a set $S \subseteq C_G$ of size at most $n+1$ such that $z < x$ for all $z \in S$ and, for all $y \in C_G$ with $y < x$, there is some $z \in S$ with $y \le z$.

First let 
\[
S' \coloneqq \big\{\pi_{(\vec{\alpha} \upharpoonright i)^\smallfrown (\alpha_i - 1)}(x) : i < n \text{ and } \alpha_i > 0\big\}.
\]
Now notice that, by (b), the set $C_G \cap J_{\vec{\alpha}}$ is a chain in $(L, \le)$. If there is $c \in C_G \cap J_{\vec{\alpha}}$ with $c < x$, let $c_0$ be the $\le$-greatest such $c$ and let $S = S' \cup \{c_0\}$; otherwise let $S = S'$.

We claim that the set $S$ satisfies the desired property. Observe first that $S \subseteq C_G$. Indeed, if $c_0$ is defined, then $c_0 \in C_G$ by definition. The other elements of $S$ belong to $C_G$ by (c). Moreover, all the elements of $S$ are strictly below $x$. The element $c_0$ is strictly below $x$ by definition. The other elements also cannot coincide with $x$ because, by the $\le_{lex}$-minimality of $\vec{\alpha}$, $x \not\in I_{(\vec{\alpha} \upharpoonright i)^\smallfrown (\alpha_i - 1)}$ for every  $i < n$ with $\alpha_i > 0$.

Now pick any $y \in C_G$ with $y < x$, towards showing that $y \le z$ for some $z \in S$. Let $\vec{\beta} = \vec{\alpha}(y)$. We already noted that $\vec{\beta} \le_{lex} \vec{\alpha}$. 

If $\vec{\beta} = \vec{\alpha}$, then $c_0$ is defined and $y \le c_0 \in S$ by definition of $c_0$. If $\vec{\beta} <_{lex} \vec{\alpha}$, let $i$ be such that $\vec{\beta} \upharpoonright i = \vec{\alpha} \upharpoonright i$ and $\beta_i < \alpha_i$. By (2), $I_{\vec{\beta}} \subseteq I_{\vec{\beta} \upharpoonright (i+1)}$. By (3), $I_{\vec{\beta} \upharpoonright (i+1)} \subseteq I_{(\vec{\alpha} \upharpoonright i)^\smallfrown (\alpha_i-1)}$. Thus, $y \in I_{(\vec{\alpha} \upharpoonright i)^\smallfrown (\alpha_i-1)}$. We conclude $y \le \pi_{(\vec{\alpha} \upharpoonright i)^\smallfrown (\alpha_i-1)}(x) \in S$, and we are done.
\end{proof}

\begin{claim}
Let $G\subseteq \mathbb{P}$ be a filter. Then, $C_G$ is a meet-subsemilattice of $L$.
\end{claim}
\begin{proof}
Fix $x,y \in C_G$. Let $\vec{\alpha} = \vec{\alpha}(x \wedge y)$.  By (c), both $\pi_{\vec{\alpha}} (x)$ and $\pi_{\vec{\alpha}} (y)$ belong to $C_G$. Moreover, by Lemma~\ref{lemma:meetproj}, $x \wedge y = \pi_{\vec{\alpha}} (x) \wedge \pi_{\vec{\alpha}} (y)$. In particular, $ \pi_{\vec{\alpha}}(x)$ and $ \pi_{\vec{\alpha}}(y)$ actually lie in $J_{\vec{\alpha}}$, not just in $I_{\vec{\alpha}}$.

As we already noted, the set $C_G \cap J_{\vec{\alpha}}$ is a chain. Thus, $\pi_{\vec{\alpha}}(x)$ and $\pi_{\vec{\alpha}}(y)$ must be comparable. But this means that $x \wedge y = \pi_{\vec{\alpha}}(x) \wedge \pi_{\vec{\alpha}}(y) \in \pi_{\vec{\alpha}}``\{x,y\} \subseteq C_G$. Therefore, $C_G$ is a meet-subsemilattice of $L$.
\end{proof}
  
Now we prove that if $G\subseteq \mathbb{P}$ is a $V$-generic filter, the set $C_G$ is also cofinal in $L$. Note that, by Remark~\ref{rmk:cofinalmeet}, this implies that $C_G$ is an $(n+1)$-ladder in the induced ordering.
\begin{claim}
Let $G\subseteq \mathbb{P}$ be a $V$-generic filter. Then, $C_G$ is cofinal in $L$.
\end{claim}
\begin{proof}
Fix $x \in L$ and $p \in \mathbb{P}$. It suffices by density to show that there are $q \le p$ and $y \in C_q$ with $x \le y$.  

First pick any $p' \le p$ such that, for each $(\vec{\alpha}, m) \in \mathrm{dom}(p')$, $(\vec{\alpha}, k) \in \mathrm{dom}(p')$ for every $k \le m$. 
Let
\[
y \coloneqq x \vee \bigvee \big\{p' (\vec{\alpha}, m) : (\vec{\alpha}, m) \in \mathrm{dom}(p')\big\}.
\]
Let $R = \{\vec{\alpha}(z) : z \le y\}$. Since $L$ is lower finite, $R$ is finite. For every $\vec{\alpha} \in R$, let $m_{\vec{\alpha}}$ be the least $m \in\omega$ such that $(\vec{\alpha}, m) \not\in \mathrm{dom}(p')$. Finally, let 
\[
q \coloneqq p' \cup \big\{((\vec{\alpha}, m_{\vec{\alpha}}), \pi_{\vec{\alpha}}(y)) : \vec{\alpha} \in R\big\}.
\]
Clearly, $q \le p' \le p$. We are done once we prove $y \in C_q$.  We do so by showing that $\pi_{\vec{\alpha}}(y) \in C_q$ for all $\vec{\alpha} \in R$ by induction on the lexicographic ordering of $R$---indeed, $\vec{\alpha}(y) \in R$ and $\pi_{\vec{\alpha}(y)}(y) = y$. 

 Fix $\vec{\alpha} \in R$ and suppose that $\pi_{\vec{\beta}} (y) \in C_q$ for all $\vec{\beta} \in R$ with $\vec{\beta} <_{lex} \vec{\alpha}$. We want to prove that $\pi_{\vec{\alpha}} (y) \in C_q$. In particular, we show that $\pi_{\vec{\alpha}} (y)$ satisfies (a)-(c) in the definition of $C_q \cap J_{\vec{\alpha}}$ for $m = m_{\vec{\alpha}}$.
 
 By definition of $q$, $q(\vec{\alpha}, m_{\vec{\alpha}}) = \pi_{\vec{\alpha}}(y)$. In particular, $\pi_{\vec{\alpha}}(y)$ satisfies (a). Moreover, since  $y \ge q(\vec{\alpha}, k) = p'(\vec{\alpha}, k) $ for all $k  < m_{\vec{\alpha}}$ by definition of $y$, we conclude that $\pi_{\vec{\alpha}}(y) \ge q(\vec{\alpha}, k)$ for all $k  < m_{\vec{\alpha}}$. Thus, $\pi_{\vec{\alpha}}(y)$ also satisfies (b). 
 
Finally, towards showing that $\pi_{\vec{\alpha}}(y)$ satisfies also (c), pick $\vec{\gamma} \in T$ with $\vec{\gamma} <^*_{lex} \vec{\alpha}$. We need to prove $\pi_{\vec{\gamma}} \circ \pi_{\vec{\alpha}}(y) \in C_q$. Denote $\pi_{\vec{\gamma}} \circ \pi_{\vec{\alpha}}(y) $ by $z$. Then
\begin{equation}\label{eq:finalcohen}
\begin{split}
z &= \pi_{\vec{\alpha}(z)} \circ \pi_{\vec{\gamma}} \circ \pi_{\vec{\alpha}} (y)\\
&= \pi_{\vec{\alpha}(z)} \circ \pi_{\vec{\alpha}} (y)\\
&= \pi_{\vec{\alpha}(z)}(y).
\end{split}
\end{equation}
The first equality follows trivially, as $z = \pi_{\vec{\alpha}(z)} (z)$. To argue the second equality, note that $I_{\vec{\alpha}(z)} \subseteq I_{\vec{\gamma}}$. Indeed, there are two cases: either $\vec{\gamma} \subseteq \vec{\alpha}(z)$, and in this case $I_{\vec{\alpha}(z)} \subseteq I_{\vec{\gamma}}$ by (2); or $\vec{\alpha}(z) <_{lex} \vec{\gamma}$ and then by (6) we conclude $I_{\vec{\alpha}(z)} \subseteq I_{\vec{\gamma}}$. So in either case $I_{\vec{\alpha}(z)} \subseteq I_{\vec{\gamma}}$. Then, $\pi_{\vec{\alpha}(z)} \circ \pi_{\vec{\gamma}} = \pi_{\vec{\alpha}(z)}$ follows from Lemma~\ref{lemma:concproj} since $I_{\vec{\alpha}(z)} \subseteq I_{\vec{\gamma}}$. Finally, the third equality also follows from Lemma~\ref{lemma:concproj}. Indeed, $\vec{\alpha}(z)  <_{lex} \vec{\alpha}$, and as such we have $I_{\vec{\alpha}(z)} \subseteq I_{\vec{\alpha}}$ by (6).  

Thus, from \eqref{eq:finalcohen} we conclude that $z = \pi_{\vec{\alpha}(z)} (y)$. But $\vec{\alpha}(z) \in R$ and $\vec{\alpha}(z) <_{lex} \vec{\alpha}$, and thus by induction hypothesis $\pi_{\vec{\alpha}(z)} (y) \in C_q$. We conclude $z \in C_q$ and, overall, that $\pi_{\vec{\alpha}}(y)$ satisfies also (c).
\end{proof}
\end{proof}

\begin{remark}\label{rmk:Sk}
When $n = 1$, the argument used in the proof of Proposition~\ref{prop:wehrung} is essentially the same as the one employed by Wehrung to prove Lemma~\ref{prop:orwehrung} (\cite[Lemma 7.5]{MR2609217}). Indeed, the sequence of countable ideals $\langle I_\alpha \mid \alpha < \omega_1 \rangle$ given by Claim~\ref{prop:wehrung:claim:1} canonically induces what Wehrung calls a \emph{locally countable norm} on $L$ \cite[Definitions 6.1 and 7.3]{MR2609217}. More importantly, the map $p \mapsto C_p$ defined in the proof of Proposition~\ref{prop:wehrung} is a forcing projection (in the sense of \cite[Definition 5.2]{MR2768691}) from a dense subset of $\mathbb{P}$ onto Wehrung's forcing $\text{Sk}$ \cite[Definition 7.1]{MR2609217} that witnesses Lemma~\ref{prop:orwehrung}. In particular, the cofinal meet-semilattice $C_G$ induced by a $V$-generic filter $G \subseteq \mathbb{P}$ is $V$-generic for $\text{Sk}$.
\end{remark}
Now we are ready to prove Theorem~\ref{thm:main1}. 

\begin{theorem}[Theorem~\ref{thm:main1}]\label{thm:wehrung}
For every $m > 1$, $\text{\upshape Add}(\omega, \omega_{m})$ forces that every maximal $n$-ladder has cardinality at least $\aleph_{\min\{n-1, m\}}$. 
\end{theorem}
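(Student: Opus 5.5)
Fix $m>1$ and let $G$ be $\text{Add}(\omega,\omega_m)$-generic over $V$. Work in $V[G]$ and fix a maximal $n$-ladder $L$. Suppose, towards a contradiction, that $|L| = \aleph_k$ with $k < \min\{n-1,m\}$. The case where $L$ is finite or countable is ruled out by Theorem~\ref{thm:Ditormax} when $n>1$; the case $n=1$ is trivial. So we may assume $1 \le k < \min\{n-1,m\}$. The goal is to produce a cofinal meet-subsemilattice of $L$ which is a $(k+1)$-ladder, and hence an $(n-1)$-ladder since $k+1 \le n-1$. Theorem~\ref{thm:characterization} then shows that $L$ is not maximal, which is the desired contradiction.

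The key step is to apply Proposition~\ref{prop:wehrung} inside an intermediate model. Since $|L|=\aleph_k$ with $k<m$, and $\text{Add}(\omega,\omega_m)$ is ccc (so cardinals are preserved), $L$ is coded by a nice name involving only $\aleph_k$ many coordinates. Concretely, I will fix a set $A\subseteq\omega_m$ with $|A|=\aleph_k$ such that $L$, with its order, lies in $V[G\upharpoonright A]$. Because $k<m$, the complement $\omega_m\setminus A$ still has size $\aleph_m \ge \aleph_k$. Hence $V[G]=V[G\upharpoonright A][G\upharpoonright(\omega_m\setminus A)]$, and the second factor is $\text{Add}(\omega,\omega_m)$-generic over $V[G\upharpoonright A]$. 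It contains an $\text{Add}(\omega,\omega_k)$-generic factor $H$ over $V[G\upharpoonright A]$.

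In $W:=V[G\upharpoonright A]$, $L$ is a lower finite lattice of cardinality $\aleph_k$; this uses ccc and the fact that lower finiteness and the lattice operations are absolute. Proposition~\ref{prop:wehrung} therefore gives, in $W[H]$, a cofinal meet-subsemilattice $C\subseteq L$ which is a $(k+1)$-ladder. These properties are upward absolute to $V[G]$. Being cofinal and being a meet-subsemilattice are $\Delta_0$ in the parameters $L$, $C$, and the lower covers of each element of $C$ are computed from the finite set ${\downarrow}x\cap C$. So in $V[G]$, $C$ witnesses via Theorem~\ref{thm:characterization} that $L$ is not maximal.

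The main obstacle is the bookkeeping in the intermediate-model step. One must check that $L\in V[G\upharpoonright A]$ for some $A$ of size $\aleph_k$; this follows from ccc by taking a nice name for a subset of $\omega_k\times\omega_k$ coding $L$. One must also check that $\text{Add}(\omega,\omega_m)$ factors as a product in which the remaining part contains $\text{Add}(\omega,\omega_k)$, and that $\omega_k$ computed in $W$ is the true $\omega_k$. A further subtlety is that "$n$-ladder" and "maximal" are evaluated in $V[G]$. The argument, however, only needs that the cofinal $(k+1)$-ladder $C$ exists in $V[G]$, together with the easy direction of Theorem~\ref{thm:characterization} applied in $V[G]$. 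Finally, the bound is $\aleph_{\min\{n-1,m\}}$ rather than $\aleph_{n-1}$ because the argument requires $k<m$.
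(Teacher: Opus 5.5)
Your proposal is correct and follows essentially the same route as the paper's proof. Both pass to an intermediate model $V[G\upharpoonright A]$ with $|A|\le\aleph_k$ that contains (a copy of) $L$, use $\aleph_k$ of the remaining Cohen reals to apply Proposition~\ref{prop:wehrung} there, and conclude via Theorem~\ref{thm:characterization} since $k+1\le n-1$. Your separate treatment of the countable case via Theorem~\ref{thm:Ditormax} (Proposition~\ref{prop:wehrung} is only stated for $n>0$) and your explicit absoluteness checks spell out steps the paper leaves as routine.
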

\begin{proof}
Let $G$ be a $V$-generic filter for $\text{Add}(\omega, \omega_{m})$. By identifying $\text{Add}(\omega, \omega_{m})$ with the poset of all finite partial maps from $\omega_{m}$ to $\{0,1\}$, given a set $X\subseteq \omega_{m}$, we let $G \upharpoonright X = \{p \in G : \mathrm{dom}(p) \subseteq X\}$.

It suffices to prove that in $V[G]$ no maximal $n$-ladder has cardinality less than $\aleph_{\min\{n-1, m\}}$. Pick an infinite $n$-ladder $L$ in $V[G]$ and $|L| = \aleph_k < \aleph_{\min\{n-1, m\}}$, towards showing that $L$ is not maximal.  

By a routine argument, there exists a set $X \subseteq \omega_{m}$ with $|X| \le \aleph_k = |L|$ such that $L \in V[G \upharpoonright X]$.

 Moreover, $V[G] = V[G \upharpoonright X][G \upharpoonright (\omega_{m} \setminus X)]$ and $G \upharpoonright (\omega_{m} \setminus X)$ is $V[G \upharpoonright X]$-generic for $\text{Add}(\omega,\omega_{m} \setminus X) \cong \text{Add}(\omega,\omega_{m})$. As $k < m$, $\text{Add}(\omega, \omega_k)$ is a complete subforcing of $\text{Add}(\omega,\omega_{m})$, and we conclude by Proposition~\ref{prop:wehrung} that in $V[G]$ there exists a cofinal meet-subsemilattice of $L$ which is also an $(k+1)$-ladder in the induced ordering. But since $k < n-1$, this means, in particular, that in $V[G]$ there exists a cofinal meet-subsemilattice of $L$ which is an $(n-1)$-ladder. By Theorem~\ref{thm:characterization} we conclude that $L$ is not maximal in $V[G]$.
\end{proof}

The same reasoning used to prove Theorem~\ref{thm:wehrung} shows the following corollary.
\begin{corollary}[Corollary~\ref{cor:main1}]
$\text{ \upshape Add}(\omega, \omega_\omega)$ forces that every maximal $n$-ladder has cardinality $\aleph_{n-1}$ for every $n > 0$.
\end{corollary}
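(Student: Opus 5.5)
We argue as in the proof of Theorem~\ref{thm:wehrung}. Fix a $V$-generic filter $G$ for $\text{Add}(\omega,\omega_\omega)$, identified with the poset of finite partial maps from $\omega_\omega$ to $\{0,1\}$, and fix $n>0$. By Ditor's Theorem~\ref{thm:Ditor}\ref{thm:Ditor-1}, every $n$-ladder in $V[G]$ has cardinality at most $\aleph_{n-1}$. It therefore suffices to show that in $V[G]$ no maximal $n$-ladder has cardinality $<\aleph_{n-1}$.

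The cases $n=1,2$ are immediate. For $n=1$, a maximal $1$-ladder is infinite: a finite chain is a proper ideal of a longer chain. Countable ones are $\omega$. For $n=2$, Theorem~\ref{thm:Ditormax} applies in $V[G]$. So assume $n\ge 3$. Let $L\in V[G]$ be an infinite $n$-ladder with $|L|=\aleph_k$ and $k<n-1$. We show that $L$ is not maximal.

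By the usual nice-name argument (each element of $L$ and each instance of the order is decided by countably many antichains, using the ccc), there is $X\subseteq\omega_\omega$ with $|X|\le\aleph_k$ and $L\in V[G\upharpoonright X]$. Then
\[
V[G]=V[G\upharpoonright X][G\upharpoonright(\omega_\omega\setminus X)],
\]
and the second factor is generic over $V[G\upharpoonright X]$ for $\text{Add}(\omega,\omega_\omega\setminus X)$. Since $|\omega_\omega\setminus X|=\aleph_\omega$, this forcing is isomorphic to $\text{Add}(\omega,\omega_\omega)$. Cardinals are preserved by ccc forcing, so $\omega_k$ means the same thing in every model involved. Hence we may pick $Y\subseteq\omega_\omega\setminus X$ with $|Y|=\aleph_k$. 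Then $G\upharpoonright Y$ is $V[G\upharpoonright X]$-generic for a forcing isomorphic to $\text{Add}(\omega,\omega_k)$.

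If $k\ge1$, apply Proposition~\ref{prop:wehrung} in $V[G\upharpoonright X]$ to the lower finite lattice $L$ of cardinality $\aleph_k$. It gives, in $V[G\upharpoonright X\cup Y]$, a cofinal meet-subsemilattice $C$ of $L$ that is a $(k+1)$-ladder. If $k=0$, take $C$ to be a cofinal chain, which already exists in $V[G\upharpoonright X]$; it is a $1$-ladder.

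Being a cofinal meet-subsemilattice and being a $(k+1)$-ladder are absolute properties of $C$ relative to $L$, so both persist to $V[G]$. Since $k+1\le n-1$, every $(k+1)$-ladder is an $(n-1)$-ladder. Theorem~\ref{thm:characterization}, applied in $V[G]$, then shows that $L$ is not maximal there.

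The only delicate point is the factorization step. One must check that $L$ lives in an intermediate model generated by at most $\aleph_k$ coordinates. One must also check that the remaining coordinates still contain a copy of $\text{Add}(\omega,\omega_k)$ that is generic over that intermediate model. Both hold because $\omega_\omega$ exceeds every $\aleph_k$, and this is exactly why we use $\aleph_\omega$ Cohen reals: every $n$ is handled simultaneously. Finally, Ditor's upper bound, now applied to maximal $n$-ladders, gives cardinality exactly $\aleph_{n-1}$.
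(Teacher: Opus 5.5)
Your proof is correct and follows the paper's argument. The paper only remarks that the corollary follows by the same reasoning as Theorem~\ref{thm:wehrung}: isolate at most $\aleph_k$ coordinates carrying $L$, use the remaining $\aleph_\omega$ coordinates to obtain an $\text{Add}(\omega,\omega_k)$-generic over the intermediate model, then apply Proposition~\ref{prop:wehrung} and Theorem~\ref{thm:characterization}. Your explicit treatment of $n=1,2$, of countable $L$, and of Ditor's upper bound fills in the routine cases. The one step you leave implicit, as the paper also does, is replacing $L$ by an isomorphic copy with underlying set $\omega_k$ before taking nice names; this is harmless because maximality is invariant under isomorphism.
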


Combining Proposition~\ref{prop:wehrung} together with the proof of Theorem~\ref{thm:orwherung} allows us to cast the previous two results (i.e., Theorem~\ref{thm:main1} and Corollary~\ref{cor:main1}) in terms of forcing axioms. In particular, the forced statements of the previous two results are actually \emph{implied} by certain restrictions of Martin's Axiom: $\mathsf{MA}_{\omega_{m}}(\text{Add}(\omega, \omega_{m}))$---i.e., $\mathsf{MA}_{\omega_m}$ restricted to the single poset $\text{Add}(\omega, \omega_{m})$---implies that every maximal $n$-ladder has cardinality at least $\aleph_{\min\{n-1,m+1\}}$; therefore, $\mathsf{MA}_{<\omega_\omega}(\text{Add}(\omega, \omega_{\omega}))$ implies that every maximal $n$-ladder has cardinality $\aleph_{n-1}$. 

\section{A maximal $3$-ladder of cardinality $\aleph_1$}\label{sec:dominating}

In the previous section, we showed that $\text{Add}(\omega, \omega_2)$ forces the nonexistence of maximal $3$-ladders of cardinality $\aleph_1$. The natural question is: is the existence of a maximal $3$-ladder of cardinality $\aleph_1$ consistent? In this section we prove Theorem~\ref{thm:main2}, which answers this question in the positive. We begin by recalling the \emph{dominating number} $\mathfrak{d}$, a classical cardinal characteristic of the continuum. 

Given $f,g \in {}^\omega \omega$, we say that $g$ \emph{dominates} $f$, in symbols $f <^* g$, if for all but finitely many integers $k \in \omega$, $f(k) < g(k)$. It is easy to see that the binary relation $<^*$ is a strict partial order and is $\sigma$-directed---i.e., every countable subset of ${}^\omega \omega$ has a $<^*$-upper bound. Replacing $<$ by $\le$ in the definition of $<^*$ yields a preorder on ${}^\omega \omega$ denoted by $\le^*$.

A family $\mathcal{D} \subseteq {}^\omega \omega$ is \emph{dominating} if  every element of ${}^\omega \omega$ is dominated by some element of $\mathcal{D}$. The dominating number is the smallest cardinality of a dominating family, i.e.,
\[
\mathfrak{d} = \min \{|\mathcal{D}| : \mathcal{D}\subseteq {}^\omega\omega \text{ is a dominating family}\}.
\]
Clearly $\mathfrak{d} \le 2^{\aleph_0}$, since ${}^\omega \omega$ is itself a dominating family. Moreover, since $<^*$ is $\sigma$-directed, it follows that $\aleph_1 \le \mathfrak{d}$. In particular, the continuum hypothesis implies $\mathfrak{d} = \aleph_1$. We refer the interested reader to \cite{MR2768685} and \cite{MR4917577} for a comprehensive treatment of this and other classical cardinal characteristics of the continuum.

Now we introduce a class of $3$-ladders whose analysis is the main focus of this section. These $3$-ladders are closely related to the dominating number, and in studying this connection, we will prove Theorem~\ref{thm:main2}.

Fix a map $\varrho: [\omega_1]^2 \rightarrow {}^\omega \omega$. Given $\alpha < \beta < \omega_1$, we simply write $\varrho(\alpha, \beta)$ instead of $\varrho(\{\alpha, \beta\})$. Denote the set $\{0\} \cup (\omega_1 \times \omega \times \omega)$ by $K$.  Every map $\varrho: [\omega_1]^2 \rightarrow {}^\omega \omega$ induces the following binary relation $\trianglelefteq_\varrho$ on $K$: for all $(\alpha, n, m), (\beta, n',m') \in K$,
\begin{enumerate}
\item $0 \trianglelefteq_\varrho (\alpha, n, m)$, and
\item $(\alpha, n, m) \trianglelefteq_\varrho (\beta, n',m')$ if and only if $\alpha \le \beta$  and $n \le n'$ and $m \le m'$ and $\varrho(\alpha, \beta)(n) \le m'$,
\end{enumerate}
where we have implicitly extended the domain $\varrho$ by imposing $\varrho(\alpha, \alpha) = \langle 0\rangle^\omega$ for all $\alpha < \omega_1$. Note that $0$ is the $\trianglelefteq_\varrho$-least element of $K$ and that, for each $\alpha < \omega_1$, the map $(\alpha, n, m) \mapsto (n,m)$ is an isomorphism between  $(\{\alpha\} \times \omega \times \omega, \trianglelefteq_\varrho)$ and  $\omega \times \omega$ with its usual product ordering.

In the remainder of this section, we prove the following result. Theorem~\ref{thm:main2} follows immediately from it.
\begin{theorem}\label{thm:dominatingmain}
$\mathfrak{d} = \aleph_1$ holds if and only if there exists a map $\varrho:[\omega_1]^2 \rightarrow {}^\omega \omega$ such that $(K, \trianglelefteq_\varrho)$ is a maximal $3$-ladder.
\end{theorem}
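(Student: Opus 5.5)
Because Theorem~\ref{thm:characterization} describes maximality of an $n$-ladder, the plan has three parts. First, find combinatorial conditions on $\varrho$ that make $(K,\trianglelefteq_\varrho)$ a $3$-ladder. Then treat the two directions of the equivalence separately, each through Theorem~\ref{thm:characterization}: $(K,\trianglelefteq_\varrho)$ is maximal if and only if it has no cofinal meet-subsemilattice which is a $2$-ladder. By Remark~\ref{rmk:cofinalmeet}, this means a cofinal meet-subsemilattice whose elements have at most two lower covers inside it.

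\emph{Step 1: ladder conditions.} I would isolate three conditions on $\varrho$, in the spirit of Todorcevic's $\rho$-functions.
\begin{enumerate}[label={\upshape (C\arabic*)}]
\item Each $\varrho(\alpha,\beta)$ is nondecreasing.
\item Subadditivity: $\varrho(\alpha,\gamma)\le\max\{\varrho(\alpha,\beta),\varrho(\beta,\gamma)\}$ pointwise for $\alpha<\beta<\gamma$, and dually $\varrho(\alpha,\beta)\le\max\{\varrho(\alpha,\gamma),\varrho(\beta,\gamma)\}$.
\item Local finiteness: for each $\beta$ and each $m$, the set $\{\alpha<\beta:\varrho(\alpha,\beta)(0)\le m\}$ is finite.
\end{enumerate}
Condition (C2) gives transitivity of $\trianglelefteq_\varrho$. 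Condition (C3) gives lower finiteness. The join of $(\alpha,n,m)$ and $(\beta,n',m')$ with $\alpha\le\beta$ should be $(\beta,\max\{n,n'\},\max\{m,m',\varrho(\alpha,\beta)(n)\})$; checking this uses (C1) and (C2). Since $0$ is a least element and $K$ is lower finite, $K$ is then a lattice. For lower covers of $(\beta,n',m')$, the candidates are $(\beta,n'-1,m')$, $(\beta,n',m'-1)$, and a single greatest element below it with first coordinate $<\beta$. To ensure that this last element is unique, I would verify that the elements below $(\beta,n',m')$ with first coordinate $<\beta$ are closed under joins; this is where the dual inequality in (C2) is used. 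I will prove that these conditions suffice. For the ``only if'' direction I also need that they, or enough of them, are necessary whenever $(K,\trianglelefteq_\varrho)$ is a $3$-ladder.

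\emph{Step 2: $\mathfrak d>\aleph_1$ implies no such $\varrho$ gives a maximal ladder.} Fix $\varrho$ such that $K$ is a $3$-ladder. The set $\{\varrho(\alpha,\beta)\}$ has size $\aleph_1<\mathfrak d$, so there is $h$ not dominated by any of its members. I would build a cofinal $C$ by recursion on $\beta<\omega_1$, taking $C\cap(\{\beta\}\times\omega\times\omega)$ to be a cofinal chain $\{(\beta,k,g_\beta(k))\}$ of column $\beta$. Then each element of $C$ has at most two lower covers in $C$: its predecessor in its own column, and the projection of the earlier part of $C$. Meet-closure is what forces $g_\beta$ to cohere with the earlier $g_\alpha$ through $\varrho(\alpha,\beta)$ at the relevant points. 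Because only countably many $\alpha<\beta$ occur, mere eventual domination is available for free. The role of $h$, which exceeds each $\varrho(\alpha,\beta)$ infinitely often, is to supply infinitely many ``synchronisation points'' at which exact equalities can be arranged, so that meets land back in $C$.

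\emph{Step 3: $\mathfrak d=\aleph_1$ gives a suitable $\varrho$.} Fix a $\le^*$-increasing dominating scale $\langle f_\xi:\xi<\omega_1\rangle$. Build $\varrho(\cdot,\beta)$ by recursion on $\beta$, satisfying (C1)--(C3), with $\varrho(\alpha,\beta)\ge^* f_\xi$ for all $\xi<\beta$. Building the countably many functions $\varrho(\alpha,\beta)$, $\alpha<\beta$, simultaneously while keeping (C2) is a standard diagonal construction over a cofinal $\omega$-sequence in $\beta$.

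For maximality, suppose $C$ is a cofinal $2$-ladder meet-subsemilattice. I would show that on a club of $\alpha$ the trace of $C$ on column $\alpha$ essentially determines a function $g_\alpha$. Pick $\xi$ with $g_\alpha<^*f_\xi$. For $\beta>\xi$, the requirement $\varrho(\alpha,\beta)\ge^* f_\xi$ then forces some element of $C$ in column $\beta$ to have three distinct lower covers in $C$: one in its own column, and two projecting to different earlier columns. This contradicts the $2$-ladder property.

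I expect this last pressing-down argument to be the main obstacle. It requires pinning down exactly how a $2$-ladder meet-subsemilattice must sit in the product columns, and extracting a single real $g_\alpha$ from it.
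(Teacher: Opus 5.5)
Your overall architecture matches the paper's: ladder conditions on $\varrho$, then a cofinal $2$-ladder meet-subsemilattice when domination fails, then a recursive $\varrho$ plus a diagonal argument against any such $C$. However, two steps fail as written.

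\textbf{The ladder conditions are incomplete.} (C1)--(C3) do not make $(K,\trianglelefteq_\varrho)$ a join-semilattice. You are missing $\varrho(\beta,\gamma)(n)\le\max\{\varrho(\alpha,\gamma)(n),\varrho(\beta,\gamma)(0)\}$ for $\alpha<\beta<\gamma$, which is condition ($\varrho$4) of Lemma~\ref{lemma:Kjoin}.
\begin{itemize}
\item It is necessary. $(\beta,n,\varrho(\alpha,\beta)(n))$ is a minimal upper bound of $(\alpha,n,0)$ and $(\beta,0,0)$, so it must be their join. Hence it must lie below their common upper bound $(\gamma,n,\max\{\varrho(\alpha,\gamma)(n),\varrho(\beta,\gamma)(0)\})$.
\item It does not follow from (C1) and (C2). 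Take $\varrho(\alpha,\beta)=\varrho(\alpha,\gamma)\equiv 0$ and $\varrho(\beta,\gamma)(n)=n$. Both inequalities of (C2) hold on this triple, yet for $n\ge 1$ the elements $(\alpha,n,0)$ and $(\beta,0,0)$ have two incomparable minimal upper bounds, $(\beta,n,0)$ and $(\gamma,n,0)$.
\end{itemize}
So your verification of the join formula is wrong. More importantly, the recursion in Step~3 must preserve this condition, and that is the delicate part of the construction, not a routine diagonalization. In the paper, $\varrho(\alpha,\delta)(k)$ is the maximum of $n_\alpha$, $f^*_\delta(k)$, $\varrho(\alpha,\delta_{n_\alpha})(k)$, and the \emph{constants} $\varrho(\delta_m,\delta_{n_\alpha})(k_{n_\alpha})$ for $m<n_\alpha$. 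Here $k_n$ is chosen so that past it every $\varrho(\delta_m,\delta_n)$ with $m<n$ lies below $f^*_\delta$. These constants push the finitely many bad coordinates into the value at $0$, which is exactly what ($\varrho$4) and the cross cases of ($\varrho$2), ($\varrho$3) require.

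Step~2, by contrast, is sound in spirit and becomes a proof once you use a single function in all columns. For $h$ increasing with $h\not<^*\varrho(0,\alpha)$ for all $\alpha$, the set $\{0\}\cup\{(\alpha,n,h(n)):\varrho(0,\alpha)(n)\le h(n)\}$ works. The reason is that $(0,n,h(n))\trianglelefteq\pi_\alpha(x)\trianglelefteq x$ pins down the last two coordinates of every projection.

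\textbf{The maximality argument is missing, and the sketched mechanism cannot work.} The core of the theorem, that no cofinal $2$-ladder meet-subsemilattice $C$ exists, is left open. Whenever $C\cap K_\beta$ is cofinal in $K_\beta$ (which holds on a club of $\beta$), meet-closure gives $\pi_\beta(x)=x\wedge y\in C$ for every $x\in C$, where $y\in C\cap K_\beta$ lies above $\pi_\beta(x)$. So an element of $C$ in such a column $\beta$ has at most one lower cover in $C$ with smaller first coordinate. No element there can have two lower covers ``projecting to different earlier columns.'' Also, $C$ may miss a column entirely, so the trace of $C$ on a column is not the right real to extract.

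The paper reads the real off the projections instead.
\begin{itemize}
\item For club points $\alpha<\gamma$, Claim~\ref{thm:dominating:claim3} shows that $\{m:\pi_\gamma(\gamma,n,m)\in C\}$ is infinite for at most one $n=N$. The proof builds an infinite descending sequence of ordinals, and this is where the $2$-ladder hypothesis is used.
\item Let $r(n)$ be the maximum of that set (or $0$) for $n\ne N$. Pick $\delta$ with $\varrho(0,\delta)$ dominating $r$ and every $\varrho(0,\xi)$, $\xi\le\gamma$. Then ($\varrho$2) yields $r<^*\varrho(\gamma,\delta)$.
\item For $x\in C$ above $(\delta,M,\varrho(\gamma,\delta)(M))$ with $M$ large, one gets $\pi_{\gamma+1}(x)=(\gamma,a',b)$ with $a'\ge M$ and $b\ge\varrho(\gamma,\delta)(a')>r(a')$.
\item Hence $\pi_\gamma(x)=\pi_\gamma(\gamma,a',b)\notin C$, contradicting the forced membership $\pi_\gamma(x)\in C$.
\end{itemize}
The contradiction comes from meet-closure, not from counting lower covers. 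The finiteness lemma that makes it possible is exactly what your proposal does not supply.
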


First let us prove the following lemma, that characterizes the properties of $\varrho$ which are sufficient and necessary for $(K, \trianglelefteq_\varrho)$ to be a join-semilattice.

\begin{lemma}\label{lemma:Kjoin}
Given a map $\varrho:[\omega_1]^2 \rightarrow {}^\omega \omega$, $(K, \trianglelefteq_\varrho)$ is a join-semilattice if and only if $\varrho$ satisfies the following properties: for every $\alpha < \beta < \gamma$ and for every $n\in\omega$,
\begin{enumerate}[label={\upshape ($\varrho$\arabic*)}]
\item $\varrho(\alpha, \beta)$ is non-decreasing,
\item $\varrho(\alpha, \gamma)(n) \le \max \{\varrho(\alpha, \beta)(n), \varrho(\beta, \gamma)(n)\}$,
\item $\varrho(\alpha, \beta)(n) \le \max \{\varrho(\alpha, \gamma)(n), \varrho(\beta, \gamma)(n)\}$,
\item $\varrho(\beta, \gamma)(n) \le \max \{\varrho(\alpha, \gamma)(n), \varrho(\beta, \gamma)(0)\}$.
\end{enumerate}
\end{lemma}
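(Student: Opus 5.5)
The plan is to check both directions directly from the definition of $\trianglelefteq_\varrho$. Reflexivity and antisymmetry are automatic, since $\varrho(\alpha,\alpha)=\langle 0\rangle^\omega$. I read "join-semilattice" as "partially ordered set in which all binary joins exist", so I also have to handle transitivity. Throughout, the only computations are with explicit triples, and the key observation is that joins should always live on the level of the larger first coordinate.

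\emph{Sufficiency.} Assume ($\varrho$1)--($\varrho$4). First, transitivity. Suppose $(\alpha,n,m)\trianglelefteq_\varrho(\beta,n',m')\trianglelefteq_\varrho(\gamma,n'',m'')$. Then $\varrho(\alpha,\beta)(n)\le m'\le m''$. By ($\varrho$1) and $n\le n'$, we also get $\varrho(\beta,\gamma)(n)\le\varrho(\beta,\gamma)(n')\le m''$. So ($\varrho$2) yields $\varrho(\alpha,\gamma)(n)\le m''$.

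Next, the joins. Take $x=(\alpha,n,m)$ and $y=(\beta,n',m')$ with $\alpha\le\beta$; the case of $0$ is trivial. I claim their join is
\[
w=\big(\beta,\ \max\{n,n'\},\ \max\{m,m',\varrho(\alpha,\beta)(n)\}\big).
\]
Clearly $w$ is an upper bound. Now let $z=(\gamma,a,b)$ be any upper bound. Then $\gamma\ge\beta$, $a\ge\max\{n,n'\}$, and $b$ dominates $m$, $m'$, $\varrho(\alpha,\gamma)(n)$ and $\varrho(\beta,\gamma)(n')$. To show $w\trianglelefteq_\varrho z$ I need two inequalities.
\begin{itemize}
\item $\varrho(\alpha,\beta)(n)\le b$. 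By ($\varrho$3) it suffices to bound $\varrho(\beta,\gamma)(n)$ by $b$.
\item $\varrho(\beta,\gamma)(\max\{n,n'\})\le b$.
\end{itemize}
If $n\le n'$, both follow from ($\varrho$1). If $n>n'$, ($\varrho$4) gives $\varrho(\beta,\gamma)(n)\le\max\{\varrho(\alpha,\gamma)(n),\varrho(\beta,\gamma)(0)\}\le b$, since $\varrho(\beta,\gamma)(0)\le\varrho(\beta,\gamma)(n')$ by ($\varrho$1). This handles both inequalities.

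\emph{Necessity.} Assume $\trianglelefteq_\varrho$ is a partial order with binary joins, and fix $\alpha<\beta<\gamma$ and $n$. I test it on specific triples.
\begin{itemize}
\item For ($\varrho$1): apply transitivity to $(\alpha,n,0)\trianglelefteq_\varrho(\alpha,n+1,0)\trianglelefteq_\varrho(\beta,n+1,\varrho(\alpha,\beta)(n+1))$. This forces $\varrho(\alpha,\beta)(n)\le\varrho(\alpha,\beta)(n+1)$.
\item For ($\varrho$2): apply transitivity to $(\alpha,n,0)\trianglelefteq_\varrho(\beta,n,\varrho(\alpha,\beta)(n))\trianglelefteq_\varrho(\gamma,n,\max\{\varrho(\alpha,\beta)(n),\varrho(\beta,\gamma)(n)\})$.
\item For ($\varrho$3): let $w$ be the join of $(\alpha,n,0)$ and $(\beta,n,0)$. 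Since $(\beta,n,\varrho(\alpha,\beta)(n))$ is an upper bound, $w$ lies on level $\beta$. Hence $w=(\beta,a,b)$ with $b\ge\varrho(\alpha,\beta)(n)$. Comparing $w$ with the upper bound $(\gamma,n,\max\{\varrho(\alpha,\gamma)(n),\varrho(\beta,\gamma)(n)\})$ gives ($\varrho$3).
\item For ($\varrho$4): take the join $w$ of $(\alpha,n,0)$ and $(\beta,0,0)$. As before, $w=(\beta,a,b)$ with $a\ge n$. The element $(\gamma,n,\max\{\varrho(\alpha,\gamma)(n),\varrho(\beta,\gamma)(0)\})$ is an upper bound, using ($\varrho$1) already established. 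From $w\trianglelefteq_\varrho$ this element we get $a\le n$, so $a=n$. The defining condition then reads $\varrho(\beta,\gamma)(n)\le\max\{\varrho(\alpha,\gamma)(n),\varrho(\beta,\gamma)(0)\}$.
\end{itemize}

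The main obstacle is choosing witnesses in the necessity direction so that the join is pinned to level $\beta$ and its middle coordinate is pinned exactly. The device for this is always to exhibit an upper bound on level $\beta$. In the sufficiency direction, the only delicate case is $n>n'$, which is precisely where ($\varrho$4) enters.
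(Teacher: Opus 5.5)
Your proof is correct and follows essentially the same route as the paper. It uses the same join formula $\big(\beta,\max\{n,n'\},\max\{m,m',\varrho(\alpha,\beta)(n)\}\big)$, with ($\varrho$3) and ($\varrho$4) showing it lies below every upper bound, and the same test triples for necessity. The differences are cosmetic: you handle transitivity in one stroke, tacitly using that ($\varrho$1)--($\varrho$2) hold trivially when $\alpha=\beta$ or $\beta=\gamma$ under the convention $\varrho(\alpha,\alpha)=\langle 0\rangle^\omega$, which the paper treats as separate cases. For ($\varrho$3)--($\varrho$4) you only pin down the level and middle coordinate of the join rather than identifying it exactly as $(\beta,n,\varrho(\alpha,\beta)(n))$, and that suffices.
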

\begin{proof}
Let us fix a map $\varrho:[\omega_1]^2 \rightarrow {}^\omega \omega$ and also drop the subscript from the binary relation $\trianglelefteq_\varrho$ since the map $\varrho$ is fixed. Let us first prove that ($\varrho$1)-($\varrho$4) are sufficient for $(K, \trianglelefteq)$ to be a join-semilattice.

The ordering $\trianglelefteq$ is clearly reflexive and antisymmetric. Let us first prove that it is also transitive. Fix $(\alpha, n, m), (\beta, n', m'), (\gamma, n'',m'') \in K$ with $(\alpha, n, m) \trianglelefteq (\beta, n', m') \trianglelefteq (\gamma, n'',m'')$. Clearly, $\alpha \le \beta \le \gamma$ and $n \le n' \le n''$ and $m \le m' \le m''$. 

If $\alpha \le \beta = \gamma$, then the claim follows directly from the definition of $\trianglelefteq$. If $\alpha = \beta < \gamma$, then the following hold:
\[
\varrho(\alpha, \gamma)(n) = \varrho(\beta, \gamma)(n) \le \varrho(\beta, \gamma)(n') \le m'',
\]
where the first inequality follows from ($\varrho$1) and the second one from assuming $(\beta, n', m') \trianglelefteq (\gamma, n'',m'')$. In particular, we conclude $(\alpha, n, m) \trianglelefteq (\gamma, n'', m'')$.

If $\alpha  < \beta < \gamma$, we have
\begin{align*}
\varrho(\alpha, \gamma)(n) &\le \max \{\varrho(\alpha, \beta)(n) , \varrho(\beta, \gamma)(n)\}\\
&\le \max \{\varrho(\alpha, \beta)(n), \varrho(\beta, \gamma)(n')\} \le \max\{m', m''\} = m'',
\end{align*}
where the first inequality follows from ($\varrho$2) and the second one from ($\varrho$1). In particular, we have $(\alpha, n, m) \trianglelefteq (\gamma, n'', m'')$.  Thus, $\trianglelefteq$ is transitive and $(K, \trianglelefteq)$ is a poset.

 Let us prove now that $(K, \trianglelefteq)$ is a join-semilattice. Pick $(\alpha, n, m), (\beta, n', m') \in K$ with $\alpha \le \beta$. We claim that 
\begin{equation}\label{eq:leastupperbound}
\big(\beta, \max\{n, n'\}, \max\{m, m', \varrho(\alpha, \beta)(n)\}\big)
\end{equation}
is the $\trianglelefteq$-least upper bound of $(\alpha, n, m)$ and $(\beta, n', m')$. Clearly, \eqref{eq:leastupperbound} is a $\trianglelefteq$-upper bound of $(\alpha, n, m)$ and $(\beta, n', m')$, by definition of $\trianglelefteq$. Now fix some $(\gamma, n'', m'')$ with $(\alpha, n, m), (\beta, n', m') \trianglelefteq (\gamma, n'', m'')$, towards showing that \eqref{eq:leastupperbound} is  $\trianglelefteq (\gamma, n'', m'')$. By definition of $\trianglelefteq$, we must have $\beta \le \gamma$ and $\max \{n, n'\} \le n''$ and 
\[
\max \{m, m', \varrho(\alpha, \gamma)(n), \varrho(\beta, \gamma)(n')\} \le m''.
\] 
First note that
\begin{equation}\label{eq:dominating1}
\varrho(\beta, \gamma)(n) \le \max \{\varrho(\alpha, \gamma)(n), \varrho(\beta, \gamma)(0)\} \le \max \{\varrho(\alpha, \gamma)(n), \varrho(\beta, \gamma)(n')\},
\end{equation}
where the first inequality follows from ($\varrho$4) and the second one from ($\varrho$1).  Now consider $\varrho(\alpha, \beta)(n)$. We have
\begin{align*}
\varrho(\alpha, \beta)(n) &\le \max \{\varrho(\alpha, \gamma)(n), \varrho(\beta, \gamma)(n)\}\\ &\le \max \{\varrho(\alpha, \gamma)(n), \varrho(\beta, \gamma)(n')\} \le m'',
\end{align*}
where the first inequality follows from ($\varrho$3) and the second one from \eqref{eq:dominating1}. Moreover, we also have
\[
\varrho(\beta, \gamma)(\max\{n,n'\}) \le \max \{\varrho(\alpha, \gamma)(n), \varrho(\beta, \gamma)(n')\} \le m''.
\]
Indeed, if $n \le n'$ then the above expression holds trivially; otherwise, it follows directly from \eqref{eq:dominating1}. Overall, we conclude
\[
(\beta, \max\{n, n'\}, \max\{m, m', \varrho(\alpha, \beta)(n)\}) \trianglelefteq (\gamma, n'', m''),
\]
as desired. Thus, $(K, \trianglelefteq)$ is a join-semilattice.

Now we prove that properties ($\varrho$1)-($\varrho$4) are actually necessary. So suppose that $(K, \trianglelefteq)$ is a join-semilattice, towards showing that $\varrho$ satisfies ($\varrho$1)-($\varrho$4). Fix $\alpha < \beta < \gamma < \omega_1$ and $n\in\omega$.

Property ($\varrho$1) follows directly from the transitivity of $\trianglelefteq$. By definition of $\trianglelefteq$, we certainly have 
\[
(\alpha, n, 0) \trianglelefteq (\alpha, n+1, 0) \trianglelefteq (\beta, n+1, \varrho(\alpha, \beta)(n+1)).
\]
By transitivity of $\trianglelefteq$,  $(\alpha, n, 0) \trianglelefteq (\beta, n+1, \varrho(\alpha, \beta)(n+1))$. Looking at the definition of $\varrho$, this means that $\varrho(\alpha, \beta)(n) \le \varrho(\alpha, \beta)(n+1)$. Thus, $\varrho(\alpha, \beta)$ is non-decreasing.

Also property ($\varrho$2) follows from the transitivity of $\trianglelefteq$. By definition of $\trianglelefteq$, 
\[
(\alpha, n, 0) \trianglelefteq (\beta, n, \varrho(\alpha, \beta)(n)) \trianglelefteq (\gamma, n, \max\{\varrho(\alpha, \beta)(n), \varrho(\beta, \gamma)(n)\}).
\]
By transitivity, $(\alpha, n, 0) \trianglelefteq (\gamma, n, \max\{\varrho(\alpha, \beta)(n), \varrho(\beta, \gamma)(n)\})$, which means, by definition of $\trianglelefteq$, that $\varrho(\alpha, \gamma)(n) \le \max\{\varrho(\alpha, \beta)(n), \varrho(\beta, \gamma)(n)\}$.

Property ($\varrho$3) follows from $(K, \trianglelefteq)$ being a join-semilattice.  Notice that the least upper bound of $(\alpha, n, 0)$ and $(\beta, n, 0)$ must be $(\beta, n, \varrho(\alpha, \beta)(n))$. Indeed, $(\alpha, n, 0), (\beta, n, 0) \trianglelefteq (\beta, n, \varrho(\alpha, \beta)(n))$, and there is no $x \in K$ such that 
\[
(\alpha, n, 0), (\beta, n, 0) \trianglelefteq x \lhd (\beta, n, \varrho(\alpha, \beta)(n)).
\]
 So, since we are assuming $(K, \trianglelefteq)$ to be a join-semilattice, $(\beta, n, \varrho(\alpha, \beta)(n))$ must be the least upper bound of $(\alpha, n, 0)$ and $(\beta, n, 0)$. Moreover, by definition of $\trianglelefteq$,
\[
(\alpha, n, 0), (\beta, n, 0) \trianglelefteq (\gamma, n, \max\{\varrho(\alpha, \gamma)(n), \varrho(\beta, \gamma)(n)\}).
\]
So,
\[
(\beta, n, \varrho(\alpha, \beta)(n)) \trianglelefteq (\gamma, n, \max\{\varrho(\alpha, \gamma)(n), \varrho(\beta, \gamma)(n)\}),
\]
and this directly implies $\varrho(\alpha, \beta)(n) \le \max\{\varrho(\alpha, \gamma)(n), \varrho(\beta, \gamma)(n)\}$.

Finally, let us show that ($\varrho$4) also holds. By arguing as at the beginning of the previous paragraph, the least upper bound of $(\alpha, n, 0)$ and $(\beta, 0 ,0)$ must be $(\beta, n, \varrho(\alpha, \beta)(n))$. Since
\[
(\alpha, n, 0), (\beta, 0, 0) \trianglelefteq (\gamma, n, \max\{\varrho(\alpha, \gamma)(n), \varrho(\beta, \gamma)(0)\}),
\]
we conclude that 
\[
(\beta, n, \varrho(\alpha, \beta)(n)) \trianglelefteq (\gamma, n, \max\{\varrho(\alpha, \gamma)(n), \varrho(\beta, \gamma)(0)\}),
\]
from which $\varrho(\beta, \gamma)(n) \le \max\{\varrho(\alpha, \gamma)(n), \varrho(\beta, \gamma)(0)\}$ follows.
\end{proof}

So we know precisely when $(K, \trianglelefteq_\varrho)$ is a join-semilattice. Next, we characterize, in terms of the properties of $\varrho$, when $(K, \trianglelefteq_\varrho)$ is also lower finite.

\begin{lemma}\label{lemma:Klowerfinite}
Given a map $\varrho:[\omega_1]^2 \rightarrow {}^\omega \omega$ such that $(K, \trianglelefteq_\varrho)$ is a join-semilattice, then  $(K, \trianglelefteq_\varrho)$ is lower finite if and only if $\{\nu < \alpha : \varrho(\nu, \alpha)(0) \le n\}$ is finite for all $\alpha < \omega_1$ and $n\in\omega$.
\end{lemma}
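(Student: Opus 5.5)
Both directions will come from an explicit description of the principal ideal of a point $(\alpha,n,m)$. By definition of $\trianglelefteq_\varrho$,
\[
{\downarrow}(\alpha,n,m)=\{0\}\cup\bigcup_{\nu\le\alpha}\big\{(\nu,n',m') : n'\le n,\ m'\le m,\ \varrho(\nu,\alpha)(n')\le m\big\}.
\]
For each fixed $\nu$ the inner set lies inside $\{\nu\}\times(n+1)\times(m+1)$, so it is finite. Hence ${\downarrow}(\alpha,n,m)$ is finite exactly when the set of $\nu\le\alpha$ contributing at least one element is finite.

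For the forward direction, assume $(K,\trianglelefteq_\varrho)$ is lower finite and fix $\alpha<\omega_1$ and $n\in\omega$. I will look at the element $(\alpha,0,n)$. Whenever $\nu<\alpha$ satisfies $\varrho(\nu,\alpha)(0)\le n$, the element $(\nu,0,0)$ lies below $(\alpha,0,n)$, since the required conditions $\nu\le\alpha$, $0\le 0$, $0\le n$ and $\varrho(\nu,\alpha)(0)\le n$ all hold. The map $\nu\mapsto(\nu,0,0)$ is injective, so the set $\{\nu<\alpha : \varrho(\nu,\alpha)(0)\le n\}$ injects into the finite set ${\downarrow}(\alpha,0,n)$ and is therefore finite.

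For the converse, assume the finiteness condition and fix $(\alpha,n,m)$. Suppose $\nu<\alpha$ contributes some $(\nu,n',m')$ to ${\downarrow}(\alpha,n,m)$, so that $\varrho(\nu,\alpha)(n')\le m$. By ($\varrho$1), which is available from Lemma~\ref{lemma:Kjoin} because $(K,\trianglelefteq_\varrho)$ is a join-semilattice, $\varrho(\nu,\alpha)$ is non-decreasing, so $\varrho(\nu,\alpha)(0)\le\varrho(\nu,\alpha)(n')\le m$. The contributing $\nu<\alpha$ therefore all lie in $\{\nu<\alpha:\varrho(\nu,\alpha)(0)\le m\}$, which is finite by hypothesis. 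Adding $\nu=\alpha$ and the point $0$ still leaves a finite union of finite sets, so ${\downarrow}(\alpha,n,m)$ is finite. The ideal ${\downarrow}0=\{0\}$ is trivially finite.

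The only step that needs any care is the converse, which requires ($\varrho$1) to pass from the condition at coordinate $n'$ down to coordinate $0$. This is exactly why the lemma assumes the join-semilattice property, and Lemma~\ref{lemma:Kjoin} supplies ($\varrho$1) from it. Everything else is direct bookkeeping with the definition of $\trianglelefteq_\varrho$.
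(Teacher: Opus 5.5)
Your proof is correct and is essentially the paper's argument. The paper sandwiches ${\downarrow}(\beta,n',m')$ between $\{(\alpha,0,0):\alpha\le\beta,\ \varrho(\alpha,\beta)(0)\le m'\}$ and $\{0\}$ together with the finite box of triples $(\alpha,n,m)$ with $n\le n'$, $m\le m'$ and $\varrho(\alpha,\beta)(0)\le m'$, using ($\varrho$1) for the upper inclusion, exactly as you do in your two directions.
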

\begin{proof}
Fix a map $\varrho:[\omega_1]^2 \rightarrow {}^\omega \omega$ such that $(K, \trianglelefteq_\varrho)$ is a join-semilattice. Fix some $(\beta, n', m') \in K$. The following holds:
\begin{multline}\label{eq:lowerfinite}
\{(\alpha, 0, 0) : \alpha \le \beta \text{ and }\varrho(\alpha, \beta)(0) \le m'\} \subseteq {\downarrow} (\beta, n', m')\subseteq \\ \{0\} \cup \{(\alpha, n, m) : \alpha \le \beta \text{ and } n \le n' \text{ and } m\le m' \text{ and } \varrho(\alpha, \beta)(0) \le m'\},
\end{multline}
where the first inclusion follows directly from the definition of $\trianglelefteq_\varrho$, and the second one from the same definition and from ($\varrho$1) of Lemma~\ref{lemma:Kjoin}. We conclude from \eqref{eq:lowerfinite}, that ${\downarrow} (\beta, n', m')$ is finite if and only if $\{\alpha < \beta: \varrho(\alpha, \beta)(0) \le m'\}$ is finite.
\end{proof}

Let us notice at this point that if $(K, \trianglelefteq_\varrho)$ is a lower finite join-semilattice, then it is necessarily a $3$-ladder. For each $\alpha < \omega_1$, denote the set $\{0\} \cup (\alpha \times \omega \times \omega)$ by $K_\alpha$. It directly follows from the definition of $\trianglelefteq_\varrho$ that $K_\alpha$ is an ideal of $(K, \trianglelefteq_\varrho)$ (in case $(K, \trianglelefteq_\varrho)$ is a poset) for every $\alpha < \omega_1$. From this point onward, we denote $\pi_{K_\alpha}$ simply by $\pi_\alpha$.
\begin{lemma}\label{lemma:Kladder}
If $(K, \trianglelefteq_\varrho)$ is a lower finite join-semilattice, then it is a $3$-ladder.
\end{lemma}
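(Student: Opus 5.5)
Since $(K, \trianglelefteq_\varrho)$ is assumed to be a lower finite join-semilattice with least element $0$, it is a lower finite lattice. So it only remains to show that every element has at most $3$ lower covers. The least element $0$ has none. I will fix $x = (\beta, n', m')$ and show that every $y \lhd x$ lies below one of at most three explicitly chosen elements strictly below $x$. This suffices, because every lower cover of $x$ must then equal one of these candidates.

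The candidates are the following.
\begin{enumerate}[label=(\roman*)]
\item $(\beta, n'-1, m')$, if $n' > 0$.
\item $(\beta, n', m'-1)$, if $m' > 0$.
\item The top element of the ideal $K_\beta$ intersected with ${\downarrow} x$, that is, $\pi_\beta(x)$.
\end{enumerate}
Candidate (iii) exists because ${\downarrow} x$ is finite and $K_\beta$ is an ideal, so $\pi_\beta(x)$ is the join of the finitely many elements of ${\downarrow} x$ that lie in $K_\beta$. Candidates (i) and (ii) are below $x$ by the definition of $\trianglelefteq_\varrho$, since $\varrho(\beta,\beta)\equiv 0$.

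Now fix $y = (\alpha, n, m) \lhd x$ with $y \neq 0$, and split into two cases.
\begin{itemize}
\item Case $\alpha < \beta$. Then $y \in K_\beta \cap {\downarrow} x$, so $y \trianglelefteq \pi_\beta(x)$, and $\pi_\beta(x) \neq x$ because $x \notin K_\beta$.
\item Case $\alpha = \beta$. The map $(\beta,n,m)\mapsto(n,m)$ is an isomorphism onto $\omega\times\omega$ with the product order. Hence $n \le n'$ and $m \le m'$ with one of them strict, so $y$ lies below candidate (i) or candidate (ii). In this case $y=0$ is also covered, since $0$ is below any of the candidates.
\end{itemize}
Hence every lower cover of $x$ lies among candidates (i)–(iii), so $x$ has at most $3$ lower covers.

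The only point needing care is that the fiber order on $\{\beta\}\times\omega\times\omega$ agrees with the coordinatewise order. This is immediate from the definition of $\trianglelefteq_\varrho$ together with $\varrho(\beta,\beta)=\langle 0\rangle^\omega$. No real obstacle is expected.
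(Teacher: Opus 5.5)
Your proposal is correct and follows the paper's proof essentially verbatim: you bound the lower covers of $(\beta,n',m')$ by the same three candidates, namely $(\beta,n'-1,m')$, $(\beta,n',m'-1)$ and $\pi_\beta(\beta,n',m')$. The case split on whether a lower element lies in $\{\beta\}\times\omega\times\omega$ or in $K_\beta$ is the same as the paper's split between $K_{\beta+1}\setminus K_\beta$ and $K_\beta$.
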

\begin{proof}
The fact that  $(K, \trianglelefteq_\varrho)$ is a lattice is clear, since every lower finite join-semilattice with a least element is a lattice. So we just have to prove that every element of $K$ has at most $3$ lower covers.  Pick $(\alpha, n, m) \in K$ and $x \in K$ with $x \lhd_\varrho (\alpha, n, m)$. 

If $x \in K_{\alpha+1} \setminus K_\alpha$, then clearly either $x \trianglelefteq_\varrho (\alpha, n-1, m)$ (if $n > 0$) or $x \trianglelefteq_\varrho (\alpha, n, m-1)$ (if $m > 0$). If on the other hand $x \in K_\alpha$, then clearly $x \trianglelefteq_\varrho \pi_\alpha(\alpha, n, m)$. Overall, $(\alpha, n, m)$ has at most $3$ lower covers.
\end{proof}

The crucial and most difficult step in the proof of Theorem~\ref{thm:dominatingmain} is the next theorem.  One of the two implications of Theorem~\ref{thm:dominatingmain} directly follows from it.

\begin{theorem}\label{thm:Kmaximal}
Given a map $\varrho:[\omega_1]^2 \rightarrow {}^\omega \omega$ such that $(K, \trianglelefteq_\varrho)$ is a $3$-ladder, then $(K, \trianglelefteq_\varrho)$ is maximal if and only if $\{\varrho(0, \alpha) : \alpha < \omega_1\}$ is a dominating family.
\end{theorem}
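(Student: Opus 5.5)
The plan is to work entirely through Theorem~\ref{thm:characterization}. By that result, $(K,\trianglelefteq_\varrho)$ is maximal if and only if it has no cofinal meet-subsemilattice that is a $2$-ladder in the induced order. By Remark~\ref{rmk:cofinalmeet}, it is enough to control the number of lower covers inside $C$. Throughout I write $\trianglelefteq$ for $\trianglelefteq_\varrho$ and use ($\varrho$1)--($\varrho$4) of Lemma~\ref{lemma:Kjoin} and the finiteness condition of Lemma~\ref{lemma:Klowerfinite} freely. A preliminary computation I will need is an explicit description of the projections. For $(\alpha,n,m)$ and $\beta<\alpha$, I expect
\[
\pi_{K_{\beta+1}}(\alpha,n,m)=(\nu,n,m)
\]
for the largest $\nu\le\beta$ with $\varrho(\nu,\alpha)(n)\le m$. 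I also expect $(\alpha,n,m)\in K_\alpha$-reducibility to be governed, via ($\varrho$2)--($\varrho$4), essentially by the single function $\varrho(0,\alpha)$ up to finitely many coordinates.

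\emph{Non-dominating $\Rightarrow$ not maximal.} Fix $f\in{}^\omega\omega$, which I may take non-decreasing, such that no $\varrho(0,\alpha)$ dominates $f$. Then for every $\alpha$ the set $A_\alpha=\{k : \varrho(0,\alpha)(k)\le f(k)\}$ is infinite. The idea is to thin each level $\{\alpha\}\times\omega\times\omega\cong\omega\times\omega$ to a chain, so that an element of $C$ has at most two lower covers in $C$: its predecessor on its own chain and its projection to lower levels. Concretely, I would try
\[
C=\{0\}\cup\{(\alpha,k,m_\alpha(k)) : \alpha<\omega_1,\ k\in A_\alpha\},
\]
where $m_\alpha(k)$ is chosen $\ge f(k)$, non-decreasing in $k$, and large enough that $\pi_\beta$ of each such point again has the same form. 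Since $k\in A_\alpha$ and ($\varrho$2), the value $f(k)$ bounds $\varrho(\nu,\alpha)(k)$ whenever it bounds $\varrho(0,\nu)(k)$. This is exactly what makes projections of chain points land on chain points of lower levels. Cofinality comes from $A_\alpha$ being unbounded. Closure under meets follows from Lemma~\ref{lemma:meetproj} and the computation of $\pi$, exactly as in the meet-subsemilattice claim in the proof of Proposition~\ref{prop:wehrung}. There, meets reduce to comparing two chain elements in the level of $\vec\alpha(x\wedge y)$. The choice of the $m_\alpha$ is where the real bookkeeping lies, and I would define it by recursion on $\alpha$.

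\emph{Dominating $\Rightarrow$ maximal.} Suppose $C$ is a cofinal meet-subsemilattice that is a $2$-ladder; I will derive a contradiction. First I take a countable elementary submodel, or just a limit $\delta$, such that $C\cap K_\delta$ is cofinal in $K_\delta$. For each $x=(\alpha,n,m)\in C$ with $\alpha\ge\delta$, consider $\pi_\delta(x)$. Arguing as in the second half of the proof of Theorem~\ref{thm:characterization}, with $b=(\delta,0,0)$, the elements of $C$ above $b$ project via $\pi_\delta$ onto a cofinal meet-subsemilattice of $K_\delta$. At the same time, the level coordinates $(n,m)$ must be eaten by at most two covers. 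The goal is to extract from $C\cap K_\delta$ a single function $g\in{}^\omega\omega$, namely
\[
g(n)=\min\{m : \text{some } c\in C\cap K_\delta \text{ with first coordinate }0\text{-compatible above }(0,n,0)\ \ldots\},
\]
defined in $V$ from countable data. The property I want is that for every $\alpha\ge\delta$, if $\varrho(0,\alpha)$ dominates $g$, then some element of $C$ in level $\alpha$ has three distinct lower covers in $C$: one from $K_\delta$, and one each in the $n$- and $m$-directions. Choosing $\alpha$ with $\varrho(0,\alpha)>^* g$, which is possible since the family is dominating, yields the contradiction.

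The main obstacle is this last step: pinning down the right $g$ and proving the ``three lower covers'' claim. It requires showing that domination of $g$ by $\varrho(0,\alpha)$ prevents the projection $\pi_\delta$ from absorbing either coordinate direction. I would first try the minimal element $y$ of $\{z\in C: b\trianglelefteq z,\ \pi_\delta(z)=c\}$, for suitable $c\in C\cap K_\delta$, mimicking the cover-counting argument of Theorem~\ref{thm:characterization}.
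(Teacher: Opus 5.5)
Your non-dominating direction is essentially the paper's. The dominating direction, which is the substance of the theorem, has a genuine gap. The function $g$ is never defined and the ``three lower covers'' claim is not proved. More importantly, the sketch lacks the two ingredients the paper's argument depends on.

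The first missing ingredient is a finiteness lemma. Call $\gamma$ reflecting if $C\cap K_\gamma$ is cofinal in $K_\gamma$. The natural real to read off from such a level is $n\mapsto\max\{m:\pi_\gamma(\gamma,n,m)\in C\}$. A priori these column sets can be infinite for many $n$, and then no domination argument applies. The paper proves that if $\gamma$ is any reflecting level with another reflecting level below it, then at most one column is infinite. This is the only place the $2$-ladder hypothesis is used, and the argument is delicate. Suppose columns $N<N'$ were both infinite. Pick points of $C$ of the form $\pi_\gamma(\gamma,N',M_0)$ and $\pi_\gamma(\gamma,N,M_1)$ with suitably large $M_0<M_1$. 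The join in $C$ of the fixed $N'$-point and the current $N$-point has exactly two lower covers in $C$. One of these covers must lie above the projection of the join to the earlier reflecting level, and this forces that cover onto a level strictly below the current $N$-point. Meeting it with the current $N$-point gives a new point of $C$ on column $N$ at a strictly smaller level. Iterating produces an infinite strictly decreasing sequence of ordinals. Nothing in your sketch supplies this.

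The second missing ingredient is the contradiction mechanism, which is closure under projection rather than a count of covers. If $C\cap K_\gamma$ is cofinal, then for every $x\in C$ we get $\pi_\gamma(x)=x\wedge y\in C$, where $y\in C\cap K_\gamma$ is above $\pi_\gamma(x)$ (Lemma~\ref{lemma:meetproj}). Let $r$ be the column maxima, set to $0$ on the exceptional column. Choose $\delta$ so that $\varrho(0,\delta)$ dominates $r$ and every $\varrho(0,\alpha)$ with $\alpha\le\gamma$. By ($\varrho$2) this gives $r<^*\varrho(\gamma,\delta)$; it is $\varrho(\gamma,\delta)$, not $\varrho(0,\delta)$, that the argument uses.

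Also, nothing guarantees that $C$ has any point on the level you chose. Instead one takes $x=(\lambda,a,b)\in C$ above $(\delta,M,\varrho(\gamma,\delta)(M))$, possibly with $\lambda>\delta$. One then shows $\pi_{\gamma+1}(x)=(\gamma,a',b)$ with $a'\ge M$ and $b\ge\varrho(\gamma,\delta)(a')>r(a')$. Hence $\pi_\gamma(x)=\pi_\gamma(\gamma,a',b)\notin C$, a contradiction.

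This computation needs the correct projection formula. In $\pi_{\beta+1}(\alpha,n,m)$, the first coordinate is the largest $\nu\le\beta$ with $\varrho(\nu,\alpha)(0)\le m$. The middle coordinate is $\max\{k\le n:\varrho(\nu,\alpha)(k)\le m\}$, which can be strictly below $n$. Your formula, which keeps $n$ fixed, is right only when some $\nu\le\beta$ has $\varrho(\nu,\alpha)(n)\le m$.

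For the easy direction no recursion is needed. With $f$ increasing, take $m_\alpha=f$ on every level, as the paper does; projections preserve the third coordinate, so this is the natural consistent choice. The inequality you cite is misdirected. What is needed is $\pi_\alpha(\alpha,k,f(k))=(\nu,k,f(k))$ with $\varrho(0,\nu)(k)\le f(k)$. This follows from $(0,k,f(k))\trianglelefteq\pi_\alpha(\alpha,k,f(k))\trianglelefteq(\alpha,k,f(k))$, equivalently from ($\varrho$3). The implication you state, $\varrho(0,\nu)(k)\le f(k)\Rightarrow\varrho(\nu,\alpha)(k)\le f(k)$, fails in general. By ($\varrho$1) and Lemma~\ref{lemma:Klowerfinite}, only finitely many $\nu<\alpha$ satisfy $\varrho(\nu,\alpha)(k)\le f(k)$.
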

\begin{proof}
Fix a map $\varrho:[\omega_1]^2 \rightarrow {}^\omega \omega$ such that $(K, \trianglelefteq_\varrho)$ is a $3$-ladder. We drop the subscript from $\trianglelefteq_\varrho$ as the map $\varrho$ is fixed.  

Let us first prove that if $\{\varrho(0, \alpha) : \alpha < \omega_1\}$ is a dominating family, then $(K, \trianglelefteq)$ is maximal.  This is the most difficult direction.

Since $(K, \trianglelefteq)$ is a $3$-ladder by assumption, $\varrho$ must satisfy ($\varrho$1)-($\varrho$4) of Lemma~\ref{lemma:Kjoin}. In what follows, we omit repeated reference to Lemma~\ref{lemma:Kjoin}.

\begin{claim}\label{thm:dominating:claim2}
For each $(\beta, n, m) \in K$ and $\alpha \le \beta$, either $\pi_\alpha (\beta,n, m) = 0$ or $\pi_\alpha (\beta,n, m) = (\nu, n', m)$, where 
\begin{align*}
\nu &= \max \{\mu < \alpha : \varrho(\mu, \beta)(0) \le m\},\\
n' &= \max \{k \le n : \varrho(\nu, \beta)(k) \le m\}.
\end{align*}
\end{claim}
\begin{proof}
If there is no $\mu < \alpha$ such that $\varrho(\mu, \beta)(0) \le m$, then it follows directly from \eqref{eq:lowerfinite} that $\pi_\alpha (\beta, n, m) = 0$. Suppose otherwise, i.e., that there exists a $\mu < \alpha$ such that $\varrho(\mu, \beta)(0) \le m$. Let $\nu$ and $n'$ be as in the statement of the claim. Note that $\nu$ is well defined, as, by Lemma~\ref{lemma:Klowerfinite}, there are only finitely many $\mu < \alpha$ such that $\varrho(\mu, \beta)(0) \le m$. Moreover, it follows directly from our definition of $\trianglelefteq$ that $(\nu, n', m) \trianglelefteq (\beta, n ,m)$.

Now pick $(\mu, a, b) \in K_\alpha$ such that $(\mu, a, b) \trianglelefteq (\beta, n, m)$, towards showing $(\mu, a, b) \trianglelefteq (\nu, n', m)$.

Clearly, we must have $a \le n$ and $b \le m$. Moreover, since $\varrho(\mu, \beta)(a) \le m$, it directly follows from ($\varrho$1) that $\varrho(\mu, \beta)(0) \le m$. Thus, $\mu \le \nu$ by definition of $\nu$. 

Now let us show that $a \le n'$. The following holds:
\begin{equation}\label{eq:domination3}
\begin{split}
\varrho(\nu, \beta)(a) &\le \max \{\varrho(\mu, \beta)(a), \varrho(\nu, \beta)(0)\}\\&\le \max \{\varrho(\mu, \beta)(a), \varrho(\nu, \beta)(n')\} \le m,
\end{split}
\end{equation}
where the first inequality follows from ($\varrho$4), the second one from ($\varrho$1) and the last one from both $\varrho(\mu, \beta)(a)$ and $\varrho(\nu, \beta)(n')$ being less than or equal to $m$.
Thus, we conclude from \eqref{eq:domination3} that $\varrho(\nu, \beta)(a) \le m$, and hence $a \le n'$ by definition of $n'$. 

Finally, we have
\[
\varrho(\mu, \nu)(a) \le \max \{\varrho(\mu, \beta)(a), \varrho(\nu, \beta)(a)\} \le m
\]
where the first inequality follows from ($\varrho$3) and the second one from \eqref{eq:domination3}. Overall, we conclude that $(\mu, a, b) \trianglelefteq (\nu, n', m)$, as desired.
\end{proof}

Now that we have determined the behavior of $\pi_\alpha$, we are ready to prove the main technical claim of the proof, which is key to proving maximality of $(K, \trianglelefteq)$. It will be useful, for readability, to let $\alpha(x) \coloneqq \beta$ for all $x = (\beta, n, m) \in K$---in other words, $\alpha(x)$ is the first coordinate of $x$.

\begin{claim}\label{thm:dominating:claim3}
Given a cofinal meet-subsemilattice $C\subseteq K$ which is also a $2$-ladder, there exists $\gamma < \omega_1$ such that:
\begin{enumerate}[label={\upshape (\roman*)}]
\item  $C \cap K_\gamma$ is cofinal in $(K_\gamma, \trianglelefteq)$, and
\item there is at most one $n$ such that $\{m \in \omega : \pi_\gamma (\gamma, n, m) \in C\}$ is infinite.
\end{enumerate}
\end{claim}
\begin{proof}
Fix a cofinal meet-subsemilattice $C \subseteq K$ which is also a $2$-ladder. It is easy to see that the $\alpha$s such that $C \cap K_\alpha$ is cofinal in $(K_\alpha, \trianglelefteq)$ form a club of $\omega_1$. In particular, we can pick two such ordinals, say $\alpha$ and $\gamma$ with $\alpha < \gamma$.  We claim that $\gamma$ satisfies the desired properties. Clearly, $\gamma$ satisfies (i), by the way we chose it. So we need to prove that  $\gamma$ also satisfies (ii).

Suppose towards a contradiction that there are $N, N' \in \omega$ with $N < N'$ such that $\{m \in \omega : \pi_\gamma (\gamma, N, m) \in C\}$ and $\{m \in \omega : \pi_\gamma (\gamma, N', m) \in C\}$ are both infinite. The goal is to reach a contradiction by defining an infinite decreasing sequence of ordinals. 

For clarity, let $N_0, N_1$ and $N_2$ be $N'$, $N$ and $N'$, respectively.
By hypothesis, we can fix $M_0, M_1, M_2 \in \omega$ such that:
\begin{enumerate}[label=(\alph*)]
\item $\max \{\varrho(0, \gamma)(N'), \varrho(\alpha, \gamma)(0)\} < M_0 < M_1 \le M_2$, and
\item $\pi_\gamma (\gamma, N_i, M_i) \in C$ for $i = 0,1,2$.
\end{enumerate}

For each $i = 0,1,2$, let $\beta_i$ be $\alpha(\pi_\gamma (\gamma, N_i, M_i))$. Let us prove that $\pi_\gamma (\gamma, N_i, M_i) = (\beta_i, N_i, M_i)$. Since $\varrho(\alpha, \gamma)(0) \le M_0 \le M_1 \le M_2$, it follows from Claim~\ref{thm:dominating:claim2} that $\alpha \le \beta_0 \le \beta_1 \le \beta_2$. Moreover, we have 
\begin{align*}
\varrho(\beta_i, \gamma)(N_i) &\le \max \{\varrho(0, \gamma)(N_i), \varrho(\beta_i, \gamma)(0)\}\\&\le \max \{\varrho(0, \gamma)(N'), \varrho(\beta_i, \gamma)(0)\} \le \max\{M_0, M_i\} = M_i
\end{align*}
where the first inequality follows from ($\varrho$4), the second one from ($\varrho$1), and the third one from the choice of $M_0$---specifically, from (a).
Thus, by Claim~\ref{thm:dominating:claim2}, $\pi_\gamma (\gamma, N_i, M_i) = (\beta_i, N_i, M_i)$. 

Since $C \cap K_\gamma$ is cofinal in $(K_\gamma, \trianglelefteq)$ and it is a meet-subsemilattice, it follows that $(C \cap K_\gamma, \trianglelefteq)$ is a lattice (see Remark~\ref{rmk:cofinalmeet}). Given $x,y \in C \cap K_\gamma$, we denote the least upper bound of $x$ and $y$ in $(C \cap K_\gamma, \trianglelefteq)$ by $x \vee_C y$---note that $x \vee_C y$ may be different from $x \vee y$.

Let us inductively define a sequence $\langle \beta^*_k : k \in \omega\rangle$ of ordinals such that, for all $k$:
\begin{enumerate}[label=(\Alph*)]
\item $(\beta^*_k, N, M_1) \in C$,
\item $(\beta^*_k, N, M_1) \trianglelefteq (\gamma, N, M_1)$,
\item $\beta_0 \le \beta^*_{k}$,
\item $\beta^*_{k+1} < \beta^*_k$.
\end{enumerate}

Property (D) tells us that this is a strictly decreasing sequence of ordinals, and thus once we define such a sequence, we reach a contradiction.

First let $\beta^*_0 = \beta_1$. Now, suppose that we have defined $\beta^*_k$ satisfying (A)-(C), towards defining $\beta^*_{k+1}$. We claim that $(\beta_0, N', M_0) \vee_C (\beta^*_{k}, N, M_1)$ must be of the form $(\iota, N', b)$, for some $\iota$ and $b$. Indeed, by (B), $(\beta^*_{k}, N, M_1) \trianglelefteq (\gamma, N, M_1)$. But since $(\gamma, N, M_1) \trianglelefteq (\gamma, N', M_2)$, we have $(\beta^*_{k}, N, M_1) \trianglelefteq (\gamma, N', M_2)$. Thus $(\beta^*_{k}, N, M_1) \trianglelefteq (\beta_2, N', M_2) \in C$. In particular, we have
\begin{equation}\label{eq:dominating4}
(\beta_0, N', M_0) \trianglelefteq (\beta_0, N', M_0) \vee_C (\beta^*_{k}, N, M_1) \trianglelefteq (\beta_2, N', M_2).
\end{equation}
Hence, as claimed, there are $\iota$ and $b$ such that $(\beta_0, N', M_0) \vee_C (\beta^*_{k}, N, M_1) = (\iota, N', b)$. Moreover, it quickly follows from \eqref{eq:dominating4} that  $\beta^*_k \le \iota$ and $M_1 \le b \le M_2$.

Since $(\beta_0, N', M_0)$ and $(\beta^*_k, N, M_1)$ are $\trianglelefteq$-incomparable (as $N' > N$ and $M_0 < M_1$), it must be the case that $(\iota, N', b)$ has two distinct lower covers $c_0, c_1$ in $C$  with $(\beta_0, N', M_0) \trianglelefteq c_0$ and $(\beta^*_k, N, M_1) \trianglelefteq c_1$. Let $\mu_j = \alpha(c_j)$ for $j = 0,1$.

\begin{subclaim*}
$\mu_0 < \beta^*_k$ and $c_0 = (\mu_0, N', b)$.
\end{subclaim*}
\begin{proof}
We first show that $\pi_\alpha (\iota, N', b) = (\nu, N', b)$ for some $\nu < \alpha$.  Let  \[
\nu \coloneqq \max \{\mu < \alpha : \varrho(\mu, \iota)(0) \le b\}.
\]
The following holds:
\begin{align*}
\varrho(\nu, \iota)(N') &\le \max\{\varrho(0, \iota)(N'), \varrho(\nu, \iota)(0)\}
\\&\le \max\{\varrho(0, \beta_0)(N'), \varrho(\beta_0, \iota)(N'), \varrho(\nu, \iota)(0)\}
\\&\le \max\{\varrho(0, \gamma)(N'), \varrho(\beta_0, \gamma)(N'), \varrho(\beta_0, \iota)(N'), \varrho(\nu, \iota)(0)\}\\&\le \max\{M_0, b\} = b,
\end{align*}
where the first inequality follows from ($\varrho$4), the second one follows from ($\varrho$2), the third one from ($\varrho$3), and, finally, the fourth one follows from $\varrho(0, \gamma)(N') \le M_0$ by the choice of $M_0$ and from \eqref{eq:dominating4}. In particular, we conclude by Claim~\ref{thm:dominating:claim2}, that $\pi_\alpha (\iota, N', b) = (\nu, N', b)$.

Since, by hypothesis, $C$ is a $2$-ladder, it follows that $c_0$ and $c_1$ are all the lower covers of $(\iota, N', b)$ in $C$. In particular, $(\nu, N', b) \trianglelefteq c_j$ for some $j \in \{0,1\}$. It follows directly from
\[
(\nu, N', b) \trianglelefteq c_j \trianglelefteq (\iota, N', b)
\]
that $c_j = (\mu_j, N', b)$. We now show that $j = 0$ and that $\mu_0 < \beta^*_k$.

Suppose towards a contradiction that $j = 1$. Since $(\beta^*_k, N, M_1) \trianglelefteq c_1$ we have $\beta^*_k \le \mu_1$. Moreover, by (C), $\beta_0 \le \beta^*_k$, and thus $\beta_0 \le \mu_1$. Furthermore,
\[
\varrho(\beta_0, \mu_1)(N') \le \max\{\varrho(\beta_0, \iota)(N'), \varrho(\mu_1, \iota)(N')\} \le b,
\]
where the first inequality follows from ($\varrho$3) and the second one both $(\beta_0, N', M_0)$ and $c_1  = (\mu_1, N', b)$ being $\trianglelefteq (\iota, N', b)$. But the above expression implies $(\beta_0, N', M_0) \trianglelefteq c_1$, which is a contradiction, as it would mean that $c_0 = c_1$. Thus, $j = 0$. 

Now assume towards a contradiction that $\mu_0 \ge \beta^*_k$. Then, we have
\begin{align*}
\varrho(\beta^*_k, \mu_0)(N) &\le \max\{\varrho(\beta^*_k, \iota)(N), \varrho(\mu_0, \iota)(N)\}\\&\le\max\{\varrho(\beta^*_k, \iota)(N), \varrho(\mu_0, \iota)(N')\} \le b,
\end{align*}
where the first inequality follows from ($\varrho$3), the second one from ($\varrho$1), and the last one from both $(\beta^*_k, N, M_1)$ and $c_0 = (\mu_0, N', b)$ being  $\trianglelefteq (\iota, N', b)$. Thus, $(\beta^*_k, N, M_1) \trianglelefteq c_0 = (\mu_0, N', b)$, which is again a contradiction, as it would imply $c_0 = c_1$. Overall, we have shown that $j = 0$ and $\mu_0 < \beta^*_k$. 
\end{proof}

We are ready to define $\beta^*_{k+1}$. Let
\[
\beta^*_{k+1} \coloneqq \alpha\big((\beta^*_k, N, M_1) \wedge (\mu_0, N', b)\big).
\]
Clearly, $\beta^*_{k+1} < \beta^*_k$, as $\beta^*_{k+1} \le \mu_0 < \beta^*_k$, where the last inequality comes from our Subclaim. We are left to prove that $\beta^*_{k+1}$ also satisfies (A)-(C).

By induction hypothesis, $(\beta^*_k, N, M_1) \in C$. Moreover, by our Subclaim, $c_0 = (\mu_0, N', b) \in C$. Since $C$ is a meet-subsemilattice of $(K, \trianglelefteq)$, we conclude that $(\beta^*_k, N, M_1) \wedge (\mu_0, N', b) \in C$. Thus, in order to show that $\beta^*_{k+1}$ satisfies (A)-(C), it suffices to prove that $\beta_0 \le \beta^*_{k+1}$ and that $(\beta^*_k, N, M_1) \wedge (\mu_0, N', b) = (\beta^*_{k+1}, N, M_1)$. 

First note that 
\begin{align*}
\varrho(\beta_0, \beta^*_k)(N) &\le  \max\{\varrho(\beta_0, \gamma)(N), \varrho(\beta^*_k, \gamma)(N)\}\\&\le \max\{\varrho(\beta_0, \gamma)(N'), \varrho(\beta^*_k, \gamma)(N)\} \le \max\{M_0, M_1\} = M_1,
\end{align*}
where the first inequality follows from ($\varrho$3), the second one from ($\varrho$1), and the last one from $(\beta_0, N', M_0) \trianglelefteq (\gamma, N', M_0)$ and $(\beta^*_k, N, M_1) \trianglelefteq (\gamma, N, M_1)$ (see (B)). Thus, we conclude that $(\beta_0, N, M_1) \trianglelefteq (\beta^*_k, N, M_1)$.

Moreover we have
\[
\varrho(\beta_0, \mu_0)(N) \le \varrho(\beta_0, \mu_0)(N') \le b,
\]
where the first inequality follows from ($\varrho$1) and the second one from $(\beta_0, N', M_0) \trianglelefteq c_0 = (\mu_0, N', b)$.
We conclude that $(\beta_0, N, M_1) \trianglelefteq (\mu_0, N', b)$---recall that $M_1 \le b$. Overall,
\[
(\beta_0, N, M_1) \trianglelefteq (\beta^*_k, N, M_1) \wedge (\mu_0, N', b) \trianglelefteq (\beta^*_k, N, M_1).
\]
This already implies $\beta_0 \le \beta^*_{k+1}$ and that $(\beta^*_k, N, M_1) \wedge (\mu_0, N', b) = (\beta^*_{k+1}, N, M_1)$. This ends the inductive definition, and with it we reach the contradiction.
\end{proof}

We are ready to prove that $(K, \trianglelefteq)$ is maximal. By Theorem~\ref{thm:characterization}, we must show that there is no cofinal meet-subsemilattice of $K$ which is also a $2$-ladder. Towards a contradiction, assume that there exists one, and denote it by $C$. Let $\gamma$ be as in the statement of Claim~\ref{thm:dominating:claim3}. Moreover, let $N$ be the unique natural number such that $\{m \in \omega : \pi_\gamma (\gamma, N, m) \in C\}$ is infinite, if it exists; otherwise, let $N = 0$.

Define $r \in {}^\omega \omega$ as follows: for each $n \in \omega$, let
\[
r(n) = \begin{cases}
\max (\{m\in\omega : \pi_\gamma (\gamma, n, m) \in C\} \cup \{0\}) &\text{if } n \neq N,\\
0 &\text{otherwise}.
\end{cases}
\]

By hypothesis $\{\varrho(0, \delta):\delta < \omega_1\}$ is a dominating family. By the $\sigma$-directedness of $<^*$, there is a $\delta$ such that $\varrho(0, \delta)$ dominates $r$ and $\varrho(0, \alpha)$ for all $\alpha \le \gamma$. Fix one such $\delta$. Clearly, we must have $\delta > \gamma$. By ($\varrho$2), for every $n \in \omega$,
\[
\varrho(0, \delta)(n) \le \max\{\varrho(0, \gamma)(n), \varrho(\gamma, \delta)(n)\}.
\]
Since $\varrho(0, \gamma) <^* \varrho(0, \delta)$, we conclude that $\varrho(0, \delta) \le^* \varrho(\gamma, \delta)$, and thus $r <^* \varrho(\gamma, \delta)$. 

Fix $M \in \omega$ such that $M > N$ and $r(n) < \varrho(\gamma, \delta)(n)$ for all $n \ge M$. Since we are assuming $C$ to be cofinal, there is $(\lambda, a, b) \in C$ such that $(\delta, M, \varrho(\gamma, \delta)(M)) \trianglelefteq (\lambda, a, b)$. We have
\begin{align*}
\varrho(\gamma, \lambda)(0) &\le \max\{\varrho(\gamma, \delta)(0), \varrho(\delta, \lambda)(0)\}\\&\le \max\{\varrho(\gamma, \delta)(M), \varrho(\delta, \lambda)(M)\} \le b,
\end{align*}
where the first inequality follows from ($\varrho$2), the second one from ($\varrho$1), and the last one from $(\delta, M, \varrho(\gamma, \delta)(M)) \trianglelefteq (\lambda, a, b)$.
In particular, by Claim~\ref{thm:dominating:claim2}, there exists $a'$ such that $\pi_{\gamma+1} (\lambda, a, b) = (\gamma, a', b)$. 

We claim that $a' \neq N$ and $r(a') < b$. By definition of $M$, it suffices to prove $\varrho(\gamma, \delta)(a') \le b$ and $a' \ge M$. The following holds:
\begin{align*}
\varrho(\gamma, \delta)(a')  &\le \max \{\varrho(\gamma, \lambda)(a'), \varrho(\delta, \lambda)(a')\}\\&\le \max \{\varrho(\gamma, \lambda)(a'), \varrho(\delta, \lambda)(0)\}
\\&\le \max \{\varrho(\gamma, \lambda)(a'), \varrho(\delta, \lambda)(M)\} \le b,
\end{align*}
where the first inequality follows from ($\varrho$3), the second one from ($\varrho$4) and the third one from ($\varrho$1). So we are left to show $a' \ge M$. By Claim~\ref{thm:dominating:claim2}, it suffices to prove that $\varrho(\gamma, \lambda)(M) \le b$. 

By ($\varrho$3), we have
\[
\varrho(\gamma, \lambda)(M) \le \max\{\varrho(\gamma, \delta)(M), \varrho(\delta, \lambda)(M)\}.
\]
We already know that $\varrho(\gamma, \delta)(M) \le b$. Moreover, since $(\delta, M, \varrho(\gamma, \delta)(M)) \trianglelefteq (\lambda, a, b)$, we also have $\varrho(\delta, \lambda)(M) \le b$. Overall, $\varrho(\gamma, \lambda)(M)\le b$, as desired. 

We have shown that $a' \neq N$ and $r(a') < b$. By definition of $r$, this means that $\pi_\gamma (\gamma, a', b) \not\in C$. However, by Lemma~\ref{lemma:concproj}, 
\[
\pi_\gamma(\lambda, a, b) = \pi_\gamma \circ \pi_{\gamma+1}(\lambda, a, b) = \pi_\gamma (\gamma, a', b).
\] 
Thus, $\pi_\gamma(\lambda, a, b) \not\in C$. Moreover, since $C \cap K_\gamma$ is cofinal in $K_\gamma$, we can fix $x \in C \cap K_\gamma$ such that $\pi_\gamma (\lambda, a ,b)  \trianglelefteq x$. It follows from Lemma~\ref{lemma:meetproj} that 
\[
(\lambda, a, b) \wedge x = \pi_\gamma(\lambda, a, b) \wedge x = \pi_\gamma(\lambda, a, b).
\]
However, since $C$ is a meet-subsemilattice, and since both $x$ and $(\lambda, a, b)$ belong to $C$, we would have $(\lambda, a, b) \wedge x = \pi_\gamma(\lambda, a, b) \in C$, which is a contradiction, as we have just shown that $\pi_\gamma(\lambda, a, b) \not\in C$. Thus, $(K, \trianglelefteq)$ is  a maximal $3$-ladder. 

Now let us prove the other implication of our theorem, which is much easier. We do so by contraposition, so assume  $\{\varrho(0, \alpha): \alpha < \omega_1\}$ is not dominating, towards showing that $(K, \trianglelefteq)$ is not maximal. 

Fix some $f \in {}^\omega \omega$ such that $f \not <^* \varrho(0, \alpha)$ for every $\alpha < \omega_1$.  By Theorem~\ref{thm:characterization}, we need to prove that $(K, \trianglelefteq_\varrho)$ has a cofinal meet-subsemilattice which is a $2$-ladder in the induced ordering. We can suppose without loss of generality that $f$ is increasing. Let
\[
C \coloneqq \{0\} \cup \{(\alpha, n, f(n)) \in K \mid \alpha < \omega_1 \text{ and } \varrho(0, \alpha)(n) \le f(n)\}.
\]
We claim that $C$ is a cofinal meet-subsemilattice of $(K, \trianglelefteq_\varrho)$ and that it is a $2$-ladder. First note that, by definition of $\trianglelefteq_\varrho$, $(0, n, f(n)) \trianglelefteq (\alpha, n, f(n))$ for every $(\alpha, n, f(n)) \in C$.

The cofinality of $C$ is a direct consequence of our assumptions on $f$. Indeed, for every $(\alpha, n, m) \in K$, since by assumption $f \not <^* \varrho(0, \alpha)$ and $f$ is increasing, there exists an $n' \ge n$ such that $\max\{m, \varrho(0, \alpha)(n')\} \le f(n')$, and therefore $(\alpha, n, m) \trianglelefteq_\varrho (\alpha, n', f(n')) \in C$.

Now note that for every $(\alpha, n, f(n)) \in C$, $\pi_\alpha (\alpha, n, f(n))$ also belongs to $C$. Indeed, we have
\[
(0, n, f(n)) \trianglelefteq_\varrho \pi_\alpha (\alpha, n, f(n)) \trianglelefteq_\varrho (\alpha, n, f(n)),
\]
from which it follows directly that $\pi_\alpha(\alpha, n, f(n)) = (\gamma, n, f(n))$ for some $\gamma < \alpha$ and $\varrho(0, \gamma)(n) \le f(n)$. Thus,  $\pi_\alpha (\alpha, n, f(n)) \in C$. This implies that every element of $C$ has at most $2$ lower covers: given $(\alpha, n, f(n))$ in $C$, let $n^-$ be the greatest $m < n$ such that $\varrho(0, \alpha)(m) \le f(m)$, if such an $m$ exists; then, the lower covers of $(\alpha, n, f(n))$ in $C$ are $(\alpha, n^-, f(n^-))$ (if $n^-$ is defined) and $\pi_\alpha (\alpha, n, f(n))$.

We are left to prove that $C$ is a meet-subsemilattice of $(K, \trianglelefteq_\varrho)$. Pick $(\alpha, n, f(n))$ and $(\beta, m, f(m))$ in $C$, with $m \le n$, towards showing that their greatest lower bound belongs to $C$ as well. If $(\alpha, n, f(n)) \wedge (\beta, m, f(m)) = 0$ then there is nothing to prove since $0 \in C$ by definition. So suppose otherwise. Then
\[
S \coloneqq \big\{\mu \le \min \{\alpha, \beta\} : \varrho(\mu, \alpha)(0) \le f(n) \text{ and } \varrho(\mu, \beta)(0) \le f(m)\big\}
\]
is nonempty by assumption and finite by Lemma~\ref{lemma:Klowerfinite}. Let $\nu = \max S$. By Lemma~\ref{lemma:meetproj}, 
\[
(\alpha, n, f(n)) \wedge (\beta, m, f(m)) = \pi_{\nu + 1} (\alpha, n, f(n)) \wedge \pi_{\nu + 1} (\beta, m, f(m)).
\]
The argument in the previous paragraph implies $\pi_{\nu + 1} (\alpha, n, f(n))  = (\nu, n, f(n)) \in C$ and $\pi_{\nu + 1} (\beta, m, f(m))  = (\nu, m, f(m)) \in C$. In particular, we conclude that 
\[
(\alpha, n, f(n)) \wedge (\beta, m, f(m)) = (\nu, m, f(m)) \in C.
\]
Thus, $C$ is also a meet-subsemilattice and we are done. 
\end{proof}

Finally, let us show the following theorem, which shows the remaining implication of Theorem~\ref{thm:dominatingmain}.

\begin{theorem}\label{thm:Kexistence}
If $\mathfrak{d}= \aleph_1$, then there exists a map $\varrho:[\omega_1]^2 \rightarrow {}^\omega \omega$ such that $(K, \trianglelefteq_\varrho)$ is a maximal $3$-ladder.
\end{theorem}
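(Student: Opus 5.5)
By Theorem~\ref{thm:Kmaximal}, Lemma~\ref{lemma:Kjoin}, Lemma~\ref{lemma:Klowerfinite} and Lemma~\ref{lemma:Kladder}, it suffices to build $\varrho:[\omega_1]^2\to{}^\omega\omega$ satisfying ($\varrho$1)--($\varrho$4), the finiteness condition that $\{\nu<\alpha:\varrho(\nu,\alpha)(0)\le n\}$ be finite for all $\alpha<\omega_1$ and $n\in\omega$, and the requirement that $\{\varrho(0,\alpha):\alpha<\omega_1\}$ be dominating. The plan is to take $\varrho$ to be a ``maximum of a Todorcevic-style $\rho$-function and a dominating-sequence term''. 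Fix a dominating family $\{f_\alpha:\alpha<\omega_1\}$, enumerated so that each $f_\alpha$ is non-decreasing and the sequence is $<^*$-increasing; this is possible by $\mathfrak{d}=\aleph_1$ and $\sigma$-directedness.

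First, I will handle the ``first coordinate'' conditions via an ordinal-metric. Take Todorcevic's $\rho:[\omega_1]^2\to\omega$ (built from a $C$-sequence with $\mathrm{otp}(C_\alpha)\le\omega$). It satisfies the ultrametric-like inequalities $\rho(\alpha,\gamma)\le\max\{\rho(\alpha,\beta),\rho(\beta,\gamma)\}$ and $\rho(\alpha,\beta)\le\max\{\rho(\alpha,\gamma),\rho(\beta,\gamma)\}$, together with finiteness of $\{\nu<\alpha:\rho(\nu,\alpha)\le n\}$. Setting $\varrho(\alpha,\beta)(0)\ge\rho(\alpha,\beta)$ will give lower finiteness.

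Second, for the higher coordinates I will define, for $\alpha<\beta$ and $k\in\omega$,
\[
\varrho(\alpha,\beta)(k)=\max\big\{\rho(\alpha,\beta),\ e_\beta(\alpha,k)\big\},
\]
where $e_\beta(\alpha,\cdot)$ is a non-decreasing function that encodes how $f_\beta$ sits above $f_\alpha$. A natural first attempt is a recursive ``coherent'' definition along the walk from $\beta$ down to $\alpha$. Let $\beta=\beta_0>\beta_1>\dots>\beta_l=\alpha$ be the walk, and put
\[
\varrho(\alpha,\beta)(k)=\max\Big(\{\rho(\alpha,\beta)\}\cup\{h_{\beta_i}(\xi,k): i<l,\ \xi\in C_{\beta_i}\cap[\alpha,\beta_i)\text{ with } \rho\text{-weight}\le\rho(\alpha,\beta)\}\Big).
\]
This is analogous to how $\rho$ itself is built, with the integers replaced by functions. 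The point of this shape is that the standard walk arguments proving the ultrametric inequalities for $\rho$ should transfer coordinatewise, giving ($\varrho$2) and ($\varrho$3). Condition ($\varrho$1) is automatic if every $h$ is non-decreasing. I will choose $h_\beta(\xi,k)\ge f_\beta(k)$ for $\xi=0$, so that $\varrho(0,\beta)\ge f_\beta$ pointwise, which yields domination.

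The main obstacle is ($\varrho$4), $\varrho(\beta,\gamma)(n)\le\max\{\varrho(\alpha,\gamma)(n),\varrho(\beta,\gamma)(0)\}$. This condition is asymmetric: it says the variation of $\varrho(\beta,\gamma)$ in $n$ beyond its value at $0$ must be witnessed by every smaller $\alpha$, in particular by $\alpha=0$. My plan is to force the $k$-dependence of $\varrho(\beta,\gamma)$ to come only from a term that is monotone in the lower ordinal, for instance $\max\{\varrho(\beta,\gamma)(0),\,g_\gamma(k)\cdot[\text{something decreasing in }\beta]\}$. Then the excess over the $0$-th coordinate at $\beta$ is bounded by the corresponding excess at $\alpha<\beta$. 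If the walk construction resists this, I will instead fall back on a direct recursive construction on $\gamma<\omega_1$. At stage $\gamma$ I define the countably many functions $\varrho(\cdot,\gamma)$ by enumerating $\gamma$ as $\{\xi_j\}$, setting $\varrho(\xi_j,\gamma)(0)\ge j$ for lower finiteness, and adding a large non-decreasing $F_\gamma$ that $\ge^*$-dominates $f_\gamma$ and all $\varrho(0,\xi)$, $\xi<\gamma$. The values are chosen so that ($\varrho$2)--($\varrho$4) with top $\gamma$ hold, using the inductive hypothesis below $\gamma$. The countability of $\gamma$ makes a diagonal choice of these finitely-constrained values possible, and this induction is where I expect most of the work to lie.
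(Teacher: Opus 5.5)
\textbf{There is a genuine gap: the proposal never actually defines $\varrho$.} Several parts are sound:
\begin{itemize}
\item the reduction to ($\varrho$1)--($\varrho$4), lower finiteness and domination;
\item the choice of a $<^*$-increasing dominating sequence of non-decreasing functions;
\item taking a pointwise maximum with Todorcevic's $\rho$. Conditions ($\varrho$1)--($\varrho$4) are preserved under pointwise maxima, so $\rho$ really can supply lower finiteness.
\end{itemize}
What is missing is the inductive definition itself, which is the entire content of the theorem. In your walk formula, $h_{\beta_i}(\xi,k)$, the ``$\rho$-weight'' restriction and the device for ($\varrho$4) are all unspecified, and you concede that ($\varrho$4) may resist.

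Your fallback says that at stage $\gamma$ the new values are ``finitely constrained'' and can be chosen diagonally. This is false. For fixed $n$ and $\alpha<\beta<\gamma$, ($\varrho$2) and ($\varrho$3) together are equivalent to: $\varrho(\alpha,\gamma)(n)=\varrho(\alpha,\beta)(n)$ unless both are $\le\varrho(\beta,\gamma)(n)$. So for every $\beta<\gamma$ at once, the new column $\varrho(\cdot,\gamma)(n)\upharpoonright\beta$ must agree with the old column $\varrho(\cdot,\beta)(n)$ wherever either exceeds $\varrho(\beta,\gamma)(n)$. Two consequences show why a diagonal choice does not work:
\begin{itemize}
\item Imposing $\varrho(\xi_j,\gamma)(0)\ge j$ forces $\varrho(\beta,\gamma)(0)\ge j$ for every $\beta\in(\xi_j,\gamma)$ with $\varrho(\xi_j,\beta)(0)<j$. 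Nothing in your sketch keeps these demands bounded at a given $\beta$.
\item ($\varrho$4) with $\alpha=0$ says that all growth of $\varrho(\beta,\gamma)$ above its $0$-th value must already appear in $\varrho(0,\gamma)$. Meanwhile ($\varrho$3) requires $\max\{\varrho(\alpha,\gamma),\varrho(\beta,\gamma)\}$ to bound the old $\varrho(\alpha,\beta)$ at every coordinate. That includes the finitely many coordinates where a uniform dominating $F_\gamma$ has not yet caught up.
\end{itemize}

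The missing idea is a $\rho$-style gluing along a cofinal $\omega$-sequence. The paper proceeds as follows.
\begin{itemize}
\item Fix non-decreasing $\delta_n$ with $\sup_n(\delta_n+1)=\delta$, and set $n_\alpha=\min\{n:\alpha\le\delta_n\}$.
\item Take a non-decreasing $f^*_\delta$ with $f_\delta\le^* f^*_\delta$ and $\varrho(\delta_m,\delta_n)\le^* f^*_\delta$ for all $m<n$. An unbounded version of your $F_\gamma$ would serve, since ($\varrho$4) with $\alpha=0$ bounds $\varrho(\beta,\xi)$ by $\max\{\varrho(0,\xi),\varrho(\beta,\xi)(0)\}$.
\item Choose increasing $k_n$ beyond which $f^*_\delta$ bounds every $\varrho(\delta_m,\delta_n)$ with $m<n$.
\item Define
\[\varrho(\alpha,\delta)(k)=\max\big(\{n_\alpha,f^*_\delta(k),\varrho(\alpha,\delta_{n_\alpha})(k)\}\cup\{\varrho(\delta_m,\delta_{n_\alpha})(k_{n_\alpha}):m<n_\alpha\}\big).\]
\end{itemize}
Each term does a specific job:
\begin{itemize}
\item The inherited term $\varrho(\alpha,\delta_{n_\alpha})$ provides coherence with the old columns.
\item The non-decreasing $n_\alpha$, together with the inherited $0$-th values, gives lower finiteness.
\item The constant corrections cover the coordinates $k<k_{n_\alpha}$. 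This yields $\varrho(\delta_m,\delta_{n_\alpha})\le\varrho(\alpha,\delta)$ pointwise, which is what makes ($\varrho$2) and ($\varrho$3) work when $n_\alpha<n_\beta$.
\item Because the corrections sit in the constant part, ($\varrho$4) in the case $n_\alpha<n_\beta$ reduces, via ($\varrho$4) at $\delta_{n_\beta}$, to bounding $\varrho(\delta_{n_\alpha},\delta_{n_\beta})(k)$. For $k\ge k_{n_\beta}$ it is bounded by $f^*_\delta(k)\le\varrho(\alpha,\delta)(k)$, and for $k<k_{n_\beta}$ by $\varrho(\beta,\delta)(0)$.
\end{itemize}
Without an explicit definition of this kind and its verification, the proposal does not prove the statement.
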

\begin{proof}
By Lemmas~\ref{lemma:Kjoin}-\ref{lemma:Kladder} and Theorem~\ref{thm:Kmaximal}, we need to construct a map $\varrho:[\omega_1]^2 \rightarrow {}^\omega \omega$ that satisfies the following properties: for every $\alpha < \beta < \gamma$ and for every $n\in\omega$,
\begin{enumerate}[label={\upshape ($\varrho$\arabic*)}]
\item $\varrho(\alpha, \beta)$ is non-decreasing,
\item $\varrho(\alpha, \gamma)(n) \le \max \{\varrho(\alpha, \beta)(n), \varrho(\beta, \gamma)(n)\}$,
\item $\varrho(\alpha, \beta)(n) \le \max \{\varrho(\alpha, \gamma)(n), \varrho(\beta, \gamma)(n)\}$,
\item $\varrho(\beta, \gamma)(n) \le \max \{\varrho(\alpha, \gamma)(n), \varrho(\beta, \gamma)(0)\}$,
\item there are finitely many $\nu < \alpha$ such that $\varrho(\nu, \alpha)(0) \le n$,
\item $\{\varrho(0, \delta) : \delta < \omega_1\}$ is a dominating family.
\end{enumerate}

Fix a dominating family $\{f_\alpha \in {}^\omega \omega : \alpha < \omega_1\}$, which exists by our hypothesis $\mathfrak{d}= \aleph_1$.  We define $\varrho \upharpoonright [\delta]^2$ by induction on $\delta < \omega_1$, and in doing so, we make sure that conditions ($\varrho$1)-($\varrho$5) are satisfied by $\varrho \upharpoonright [\delta]^2$. For clarity, when we say ``$\varrho \upharpoonright [\delta]^2$ satisfies ($\varrho$1)-($\varrho$5)'' we mean that the map $\varrho \upharpoonright [\delta]^2$ satisfies statements ($\varrho$1)-($\varrho$5) for all $\alpha, \beta, \gamma < \delta$ and $n\in\omega$.

Assume that we defined $\varrho \upharpoonright [\delta]^2$  for some $0 <\delta < \omega_1$ and that $\varrho \upharpoonright [\delta]^2$ satisfies ($\varrho$1)-($\varrho$5), towards extending $\varrho$ on $[\delta +1]^2$.

Fix a non-decreasing sequence $(\delta_n)_{n\in\omega}$ such that $\sup_n (\delta_n+1) = \delta$. Now, for each $\alpha < \delta$, let $n_\alpha = \min\{ n \in \omega : \alpha \le \delta_n\}$. Fix a non-decreasing $f^*_\delta \in {}^\omega \omega$ such that $f_\delta \le^* f^*_\delta$ and $\varrho(\delta_m, \delta_n) \le^* f^*_\delta$ for all $n, m \in \omega$ with $m < n$---we can find such $f_\delta^*$ because $<^*$ is $\sigma$-directed. 

Now, fix an increasing sequence $\langle k_n : n \in \omega\rangle$ of natural numbers such that $\varrho(\delta_m, \delta_n)(k) \le f^*_\delta (k)$ for all $m < n$ and $k \ge k_n$.

Extend $\varrho$ on $[\delta+1]^2$ as follows: for each $\alpha < \delta$ and $k \in \omega$ let
\[
\varrho(\alpha, \delta)(k) \coloneqq \max (\{n_\alpha, f^*_\delta(k), \varrho(\alpha, \delta_{n_\alpha})(k)\} \cup \{\varrho(\delta_m, \delta_{n_\alpha})(k_{n_\alpha}) : m < n_\alpha\}).
\]

We now prove that $\varrho$ satisfies ($\varrho$1)-($\varrho$5) also on $[\delta+1]^2$. Clearly, $\varrho(\alpha, \delta)$ is non-decreasing for all $\alpha < \delta$, since $f^*_\delta$ is non-decreasing by choice and $\varrho(\alpha, \delta_{n_\alpha})$ is non-decreasing by induction hypothesis. Thus, ($\varrho$1) holds.

\begin{claim}\label{thm:dominating:claim4}
For all $\alpha < \delta$, $k \in \omega$ and $m < n_\alpha$, $\varrho(\delta_m, \delta_{n_\alpha})(k) \le \varrho(\alpha, \delta)(k)$.
\end{claim}
\begin{proof}
First suppose $k \ge k_{n_\alpha}$. Then $\varrho(\delta_{m}, \delta_{n_\alpha})(k) \le f^*_\delta (k)$. By definition of $\varrho(\alpha, \delta)$, $f^*_\delta(k) \le \varrho(\alpha, \delta)(k)$. Thus, $\varrho(\delta_{m}, \delta_{n_\alpha})(k) \le \varrho(\alpha, \delta)(k)$.

Now suppose $k < k_{n_\alpha}$. By induction hypothesis, $\varrho(\delta_m, \delta_{n_\alpha})$ is non-decreasing. Therefore, $\varrho(\delta_{m}, \delta_{n_\alpha})(k) \le \varrho(\delta_m, \delta_{n_\alpha})(k_{n_\alpha})$. Moreover, $\varrho(\delta_m, \delta_{n_\alpha})(k_{n_\alpha}) \le \varrho(\alpha, \delta)(k)$ by definition of $\varrho(\alpha, \delta)$. Once again, we obtain $\varrho(\delta_{m}, \delta_{n_\alpha})(k) \le \varrho(\alpha, \delta)(k)$.
\end{proof}

\begin{claim}
$\varrho \upharpoonright [\delta + 1]^2$ satisfies {\upshape ($\varrho$2)}.
\end{claim}
\begin{proof}
Fix $\alpha, \beta$ such that $\alpha < \beta < \delta$ and $k \in \omega$, towards showing $\varrho(\alpha, \delta)(k) \le \max\{\varrho(\alpha, \beta)(k) , \varrho(\beta, \delta)(k)\}$. Clearly, $n_\alpha \le n_\beta$, since $\alpha < \beta$. Looking at the definition of $\varrho(\alpha, \delta)(k)$, it is easy to see that, in order to prove our main inequality, it suffices to check that 
\begin{enumerate}[label={(\roman*)}]
\item $\varrho(\alpha, \delta_{n_\alpha})(k) \le \max\{\varrho(\alpha, \beta)(k) , \varrho(\beta, \delta)(k)\}$, and
\item $\varrho(\delta_m, \delta_{n_\alpha})(k_{n_\alpha}) \le \max\{\varrho(\alpha, \beta)(k) , \varrho(\beta, \delta)(k)\}$ for all $m < n_\alpha$.
\end{enumerate}
Let us first take care of the inequality (ii). Given some $m < n_{\alpha}$, we have
\begin{align*}
\varrho(\delta_m, \delta_{n_\alpha})(k_{n_\alpha}) &\le \max\{\varrho(\delta_m, \delta_{n_\beta})(k_{n_\alpha}), \varrho(\delta_{n_\alpha}, \delta_{n_\beta})(k_{n_\alpha})\}\\&\le \max\{\varrho(\delta_m, \delta_{n_\beta})(k_{n_\beta}), \varrho(\delta_{n_\alpha}, \delta_{n_\beta})(k_{n_\beta})\}\\&\le \varrho(\beta, \delta)(k) \le \max\{\varrho(\alpha, \beta)(k) , \varrho(\beta, \delta)(k)\},
\end{align*}
where the first inequality follows from assuming ($\varrho$3) below $\delta$; the second one follows from $k_{n_\alpha} \le k_{n_\beta}$ and from assuming ($\varrho$1) below $\delta$; finally, the third inequality follows directly from the definition of $\varrho(\beta, \delta)$. 

Now let us prove the inequality (i). First suppose that $n_\alpha = n_\beta$. In particular, $\alpha < \beta \le \delta_{n_\alpha} = \delta_{n_\beta}$.  Then,
\begin{align*}
\varrho(\alpha, \delta_{n_\alpha})(k) &\le \max\{\varrho(\alpha, \beta)(k), \varrho(\beta, \delta_{n_\alpha})(k)\}\\ &= \max\{\varrho(\alpha, \beta)(k), \varrho(\beta, \delta_{n_\beta})(k)\}\\&\le \max\{\varrho(\alpha, \beta)(k), \varrho(\beta, \delta)(k)\},
\end{align*}
where the first inequality follows from assuming ($\varrho$2) below $\delta$, and the second one is directly implied by the definition of $\varrho(\beta, \delta)(k)$.

Now suppose that $n_\alpha < n_\beta$. By the minimality of $n_\beta$, we must have $\delta_{n_\alpha} < \beta$. The following holds:
\begin{align*}
\varrho(\alpha, \delta_{n_\alpha})(k) &\le \max \{\varrho(\alpha, \beta)(k), \varrho(\delta_{n_\alpha}, \beta)(k)\}\\& \le \max \{\varrho(\alpha, \beta)(k), \varrho(\delta_{n_\alpha}, \delta_{n_\beta})(k), \varrho(\beta, \delta_{n_\beta})(k)\}\\&\le \max\{\varrho(\alpha, \beta)(k), \varrho(\beta, \delta)(k)\}.
\end{align*}
The first two inequalities follow from assuming ($\varrho$3) below $\delta$. The last one holds because $\varrho(\delta_{n_\alpha}, \delta_{n_\beta})(k) \le \varrho(\beta, \delta)(k)$ by Claim~\ref{thm:dominating:claim4} and because $\varrho(\beta, \delta_{n_\beta})(k) \le \varrho(\beta, \delta)(k)$ by definition of $\varrho(\beta, \delta)(k)$.  Overall, we have shown that also the inequality (i) is satisfied.
\end{proof}

\begin{claim}
$\varrho \upharpoonright [\delta + 1]^2$ satisfies {\upshape ($\varrho$3)}.
\end{claim}
\begin{proof}
Fix $\alpha, \beta$ such that $\alpha < \beta < \delta$ and $k \in \omega$, towards showing $\varrho(\alpha, \beta)(k) \le \max\{\varrho(\alpha, \delta)(k) , \varrho(\beta, \delta)(k)\}$.

First suppose that $n_\alpha = n_\beta$. We have
\begin{align*}
\varrho(\alpha, \beta)(k) &\le  \max \{\varrho(\alpha, \delta_{n_\alpha})(k), \varrho(\beta, \delta_{n_\alpha})(k)\}\\ &=  \max \{\varrho(\alpha, \delta_{n_\alpha})(k), \varrho(\beta, \delta_{n_\beta})(k)\}\\&\le \max\{\varrho(\alpha, \delta)(k) , \varrho(\beta, \delta)(k)\},
\end{align*}
where the first inequality follows from assuming ($\varrho$3) below $\delta$, and the second one is a direct consequence of the definition of $\varrho(\alpha, \delta)(k)$ and $\varrho(\beta, \delta)(k)$. 

Now suppose $n_\alpha < n_\beta$. By minimality of $n_\beta$, we must have $\delta_{n_\alpha} < \beta$. The following holds:
\begin{align*}
\varrho(\alpha, \beta)(k) &\le \max\{\varrho(\alpha, \delta_{n_\alpha})(k), \varrho(\delta_{n_\alpha}, \beta)(k)\}\\&\le \max\{\varrho(\alpha, \delta_{n_\alpha})(k), \varrho(\delta_{n_\alpha}, \delta_{n_\beta})(k), \varrho(\beta, \delta_{n_\beta})(k)\}\\&\le \max\{\varrho(\alpha, \delta)(k) , \varrho(\beta, \delta)(k)\}.
\end{align*}
The first inequality follows from assuming ($\varrho$2) below $\delta$, while the second from assuming ($\varrho$3) below $\delta$. The last one holds because $\varrho(\delta_{n_\alpha}, \delta_{n_\beta})(k) \le \varrho(\beta, \delta)(k)$ by Claim~\ref{thm:dominating:claim4} and because $\varrho(\alpha, \delta_{n_\alpha})(k)$ and $\varrho(\beta, \delta_{n_\beta})(k)$ are less than or equal to $\varrho(\alpha, \delta)(k)$ and $\varrho(\beta, \delta)(k)$, respectively, by definition of $\varrho(\alpha, \delta)(k)$ and $\varrho(\beta, \delta)(k)$.
\end{proof}
\begin{claim}
$\varrho \upharpoonright [\delta + 1]^2$ satisfies {\upshape ($\varrho$4)}.
\end{claim}
\begin{proof}
Fix $\alpha, \beta$ such that $\alpha < \beta < \delta$ and $k \in \omega$, towards showing $\varrho(\beta, \delta)(k) \le \max\{\varrho(\alpha, \delta)(k) , \varrho(\beta, \delta)(0)\}$. Looking at the definition of $\varrho(\beta, \delta)(k)$, it is easy to see that, in order to prove our inequality, it suffices to check that 
\begin{enumerate}[label={(\roman*)}]
\item $\varrho(\beta, \delta_{n_\beta})(k) \le \max\{\varrho(\alpha, \delta)(k) , \varrho(\beta, \delta)(0)\}$, and
\item $\varrho(\delta_m, \delta_{n_\beta})(k_{n_\beta}) \le \max\{\varrho(\alpha, \delta)(k) , \varrho(\beta, \delta)(0)\}$ for all $m < n_\beta$.
\end{enumerate}
Inequality (ii) is trivial. Indeed,  $\varrho(\delta_m, \delta_{n_\beta})(k_{n_\beta}) \le \varrho(\beta, \delta)(0)$ follows directly from the definition of $\varrho(\beta, \delta)(0)$. So we are left to prove (i).

First suppose that $n_\alpha = n_\beta$. Then,
\begin{align*}
\varrho(\beta, \delta_{n_\beta})(k) &\le \max \{ \varrho(\alpha, \delta_{n_\beta})(k), \varrho(\beta, \delta_{n_\beta})(0)\}\\&= \max \{ \varrho(\alpha, \delta_{n_\alpha})(k), \varrho(\beta, \delta_{n_\beta})(0)\}\\&\le  \max \{ \varrho(\alpha, \delta)(k), \varrho(\beta, \delta)(0)\},
\end{align*}
where the first inequality follows from assuming ($\varrho$4) below $\delta$, and the second one follows directly from the definition of $\varrho(\alpha, \delta)(k)$ and $\varrho(\beta, \delta)(0)$.

Now suppose $n_\alpha < n_\beta$. From the minimality of $n_\beta$ it follows $\delta_{n_\alpha} < \beta$. Then,
\begin{align*}
\varrho(\beta, \delta_{n_\beta})(k) &\le \max \{ \varrho(\delta_{n_\alpha}, \delta_{n_\beta})(k), \varrho(\beta, \delta_{n_\beta})(0)\} \\
&\le \max \{\varrho(\delta_{n_\alpha}, \delta_{n_\beta})(k), \varrho(\beta, \delta)(0)\},
\end{align*}
where the first inequality follows from assuming ($\varrho$4) below $\delta$ and the second one from the definition of $\varrho(\beta, \delta)(0)$. Given the above inequality, we are done proving (i) if we are able to show that 
\begin{equation}\label{eq:dominating5}
\varrho(\delta_{n_\alpha}, \delta_{n_\beta})(k) \le \max\{\varrho(\alpha, \delta)(k), \varrho(\beta, \delta)(0)\}.
\end{equation}

If $k \ge k_{n_\beta}$, then $\varrho(\delta_{n_\alpha}, \delta_{n_\beta})(k) \le f^*_\delta (k)$, and by definition of  $\varrho(\alpha, \delta)$, $f^*_\delta (k) \le \varrho(\alpha, \delta)(k)$. In particular, we conclude $\varrho(\delta_{n_\alpha}, \delta_{n_\beta})(k) \le \varrho(\alpha, \delta)(k)$. 

Now consider instead the case $k < k_{n_\beta}$. By assuming ($\varrho$1) below $\delta$, we have $\varrho(\delta_{n_\alpha}, \delta_{n_\beta})(k) \le \varrho(\delta_{n_\alpha}, \delta_{n_\beta})(k_{n_\beta})$ and, by definition of $\varrho(\beta, \delta)$, $ \varrho(\delta_{n_\alpha}, \delta_{n_\beta})(k_{n_\beta}) \le \varrho(\beta, \delta)(0)$. 

In either case, we have shown that \eqref{eq:dominating5} holds, and we are done proving (i).
\end{proof}
\begin{claim}
$\varrho \upharpoonright [\delta + 1]^2$ satisfies {\upshape ($\varrho$5)}.
\end{claim}
\begin{proof}
We must show that, for each $n \in \omega$, there are finitely many $\alpha < \delta$ such that  $\varrho(\alpha, \delta)(0) \le n$. For every such $\alpha$, we must have $n_\alpha \le n$ and $\varrho(\alpha, \delta_{n_\alpha})(0) \le n$---this follows directly from the definition of $\varrho(\alpha, \delta)(0)$. Thus,
\[
\{\alpha < \delta : \varrho(\alpha, \delta)(0) \le n\} \subseteq \bigcup_{m \le n}\{\alpha \le \delta_m : \varrho(\alpha, \delta_m)(0) \le n\}.
\]
From assuming ($\varrho$5) below $\delta$, it follows that the set on the right hand side is a finite union of finite sets. Hence our claim holds.
\end{proof}

We have shown that the map $\varrho$ we have defined satisfies ($\varrho$1)-($\varrho$5). We are left to prove that it also satisfies ($\varrho$6). But this follows directly from our definition of $\varrho$. Indeed, for every $\delta < \omega_1$, we have $f_\delta \le^* f^*_\delta$, by the choice of $f^*_\delta$, and $f^*_\delta \le^* \varrho(0, \delta)$ by definition of $\varrho(0, \delta)$. In particular, $f_\delta \le^* \varrho(0, \delta)$ for every $\delta < \omega_1$. Since $\{f_\delta : \delta < \omega_1\}$ is a dominating family, so is $\{\varrho(0, \delta) : \delta < \omega_1\}$. 
\end{proof}

Note that every join-semilattice of the form $(K, \trianglelefteq_\varrho)$ for some map $\varrho:[\omega_1]^2 \rightarrow {}^\omega \omega$ has breadth $3$. Consider the three elements $(0, 0, \varrho(0,1)(1)+1), (0, 1, 0), (1,0, 0)$  of $K$. It is easy to check (using \eqref{eq:leastupperbound}) that none of them is $\trianglelefteq_\varrho$-below  the $\trianglelefteq_\varrho$-least upper bound of the other two. In the next section we address the natural question of whether there can be a maximal $3$-ladder of breadth $2$.

\section{A maximal $3$-ladder of breadth $2$}\label{sec:club}

In the previous section, we constructed a maximal $3$-ladder of cardinality $\aleph_1$ under $\mathfrak{d} = \aleph_1$. That $3$-ladder has breadth $3$. In this section we prove Theorem~\ref{thm:main3}, showing that, consistently, there exists a maximal $3$-ladder of breadth $2$. Note that such a ladder must have cardinality $\aleph_1$. In fact, it follows from Theorem~\ref{thm:Ditor}\ref{thm:Ditor-1} that a $3$-ladder of breadth $2$ (more generally, a lower finite join-semilattice of breadth $2$) has cardinality at most $\aleph_1$. Furthermore, maximal $3$-ladders are necessarily uncountable by Ditor's Theorem~\ref{thm:Ditormax}. Thus, a maximal $3$-ladder of breadth $2$ must indeed have cardinality $\aleph_1$.

We show that the existence of a maximal $3$-ladder of breadth $2$ follows from a guessing principle, the club principle, denoted by $\clubsuit$, introduced by Ostaszewski in \cite{MR454941}. In the literature this principle is also called Ostaszewski's principle and tiltan. Let us recall its definition.
\begin{definition}
The club principle $\clubsuit$ holds if there exists a sequence $\langle A_\alpha : \alpha < \omega_1 \text{ and } \alpha \text{ is limit}\rangle$ such that:
\begin{enumerate}
\item $A_\alpha$ is an unbounded subset of $\alpha$, for every limit $\alpha < \omega_1$,
\item for every uncountable $X\subseteq \omega_1$ the set $\{\alpha < \omega_1 : A_\alpha \subseteq X\}$ is stationary.
\end{enumerate}
\end{definition}
The club principle is a (strict) weakening of Jensen's diamond principle $\diamondsuit$. Indeed, $\diamondsuit$ is equivalent to $\mathsf{CH}+\clubsuit$, but the club principle is consistent with $\neg \mathsf{CH}$ \cite[Theorem 7.4]{MR1623206} and even with arbitrarily large continuum by an unpublished result of Baumgartner---for a proof see \cite{MR1900391}.

\begin{theorem}[Theorem~\ref{thm:main3}]
If $\clubsuit$ holds, then there exists a maximal $3$-ladder of breadth $2$.
\end{theorem}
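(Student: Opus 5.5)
We will build, using a $\clubsuit$-sequence $\langle A_\alpha : \alpha<\omega_1 \text{ limit}\rangle$, a $3$-ladder $L=\bigcup_{\alpha<\omega_1}L_\alpha$ as an increasing continuous union of countable ideals $L_\alpha$, each of breadth $2$, with $L_\alpha$ an ideal of $L_\beta$ for $\alpha<\beta$. Maximality will be checked through Theorem~\ref{thm:characterization}: we must rule out a cofinal meet-subsemilattice $C\subseteq L$ that is a $2$-ladder. Given such a $C$, a closing-off argument gives a club of $\alpha$ such that $C\cap L_\alpha$ is cofinal in $L_\alpha$. The aim is that, at suitable limit stages $\alpha$, the guessed set $A_\alpha$ codes (via a fixed bijection $\omega_1\to L$ with the usual bookkeeping) an unbounded subset of $C\cap L_\alpha$. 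We then extend $L_\alpha$ so that no cofinal $2$-ladder meet-subsemilattice of the full $L$ can contain that guessed set. Since $\clubsuit$ does not guess $C\cap L_\alpha$ itself but only an unbounded subset of it, the killing condition has to depend only on a cofinal chain inside $C\cap L_\alpha$. I will first pass to a chain: each $A_\alpha$ is thinned to an $\omega$-sequence $a^\alpha_0<a^\alpha_1<\cdots$ in $L_\alpha$, cofinal in $L_\alpha$, which is possible once $A_\alpha$ is cofinal in $L_\alpha$ (if it is not, nothing is done at stage $\alpha$).

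\textbf{Successor/limit step.} $L_{\alpha+1}$ is obtained from $L_\alpha$ by adding a new countable block above it. This is done as in the proof of Theorem~\ref{thm:characterization} from a cofinal chain, or as in Section~\ref{sec:dominating} by gluing a copy of $\omega\times\omega$, which gives breadth $2$ locally. The new element $(n,m)$ is placed above $\pi$-projections into $L_\alpha$ chosen along the guessed chain, and it has at most three lower covers: $(n-1,m)$, $(n,m-1)$, and one element of $L_\alpha$. So $L_{\alpha+1}$ is a $3$-ladder, and the new elements have projection to $L_\alpha$ equal to a chain element. Breadth $2$ must be preserved: whenever $x\in L_{\alpha+1}\setminus L_\alpha$ and $y\in L_\alpha$, the join $x\vee y$ should equal either $x$ or the join of $x$ with something in its own column. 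This is arranged by making the map $y\mapsto\min\{$new elements above $y\}$ behave like a chain-indexed map, since $L_\alpha$ embeds into the new block only through the chain $\langle a^\alpha_k\rangle$. The three-element test (via the Lemma on $[X]^{n+1}$) then reduces to case analysis on how many of the three elements are new. I will check breadth $2$ by this analysis and lower finiteness by ensuring each new element sits above only finitely many $a^\alpha_k$.

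\textbf{Killing the $2$-ladders.} Suppose $C$ is a cofinal $2$-ladder meet-subsemilattice, and let $X\subseteq\omega_1$ code $C$. By $\clubsuit$, stationarily many $\alpha$ have $A_\alpha\subseteq X$; intersecting with the club, we also get $C\cap L_\alpha$ cofinal in $L_\alpha$, so the chain $a^\alpha_k$ lies in $C$. As in the final part of the proof of Theorem~\ref{thm:Kmaximal}, take $z\in C$ above a new element of $L_{\alpha+1}$. Using Lemma~\ref{lemma:meetproj}, every element $z\wedge a^\alpha_k=\pi_\alpha(z)\wedge a^\alpha_k$ belongs to $C$. The construction is designed so that $\pi_\alpha(z)$ is essentially $a^\alpha_k$ for a specific $k$. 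Then a node of $C$ lying above two incomparable elements must have three lower covers in $C$: one coming from $L_\alpha$, and two coming from the two coordinate directions of the block, which the cofinality of $C$ forces to be hit infinitely often. This mirrors Claim~\ref{thm:dominating:claim3}, where two infinite columns yield a descending sequence of ordinals. The role that dominating played there is played here by the guessing. The \emph{main obstacle} is to make this contradiction work using only the guessed chain, while simultaneously keeping breadth $2$, which forbids the genuinely $3$-dimensional product used in Section~\ref{sec:dominating}. I expect to try the $\omega\times\omega$ block indexed by the guessed chain first, and to adjust the placement (for instance, putting column $n$ above $a^\alpha_n$) until the two-column argument of Claim~\ref{thm:dominating:claim3} goes through.
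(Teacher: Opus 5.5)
\textbf{There is a genuine gap, at exactly the step you flag as the main obstacle, and it comes from the direction of your guessing.}

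Suppose $C\cap L_\alpha$ is cofinal in $L_\alpha$. Then for every $z\in C$, meet-closure and Lemma~\ref{lemma:meetproj} force $\pi_\alpha(z)\in C$. So the only possible contradiction is to produce some $z\in C$ with $\pi_\alpha(z)\notin C$.

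Guessing an unbounded subset of $C\cap L_\alpha$ gives only \emph{positive} information. Gluing the new block so that projections land on guessed elements $a^\alpha_k\in C$ therefore helps $C$ rather than obstructs it. Indeed, $C$ together with any chain of block elements whose projections lie in $C$ is again a meet-subsemilattice with at most $2$ lower covers per element. This is the same kind of gluing used in Theorem~\ref{thm:characterization} to extend a ladder.

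Your fallback also fails. You claim that cofinality forces $C$ to meet both coordinate directions of the new block infinitely often, so that some node acquires three lower covers. But cofinality of $C$ in $L$ says nothing about $C\cap(L_{\alpha+1}\setminus L_\alpha)$, which may well be empty. The $2$-ladder condition actually pushes the other way: in the paper's setting, Claim~\ref{thm:main3:claim2} shows that such a $C$ meets both directions of a column infinitely often in at most one column.

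There is also a secondary problem. An ordinal-unbounded subset of $C\cap L_\alpha$ delivered by $\clubsuit$ need not be cofinal in the order of $L_\alpha$. By lower finiteness, no coding of $C$ by ordinals can force this, so your thinning to a cofinal chain may fail at every guessing stage.

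\textbf{The missing idea is to guess negatively, and to guess columns rather than elements.}

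The paper uses columns $\{\alpha\}\times E$, where $E=\omega\times\{0,1\}$ and $(n,i)\prec(m,j)$ iff $n<m$ and $j=1$. This gives breadth $2$ directly. Projections are injective on columns (property (4)). The argument then runs as follows.
\begin{enumerate}
\item Claim~\ref{thm:main3:claim2} turns the $2$-ladder condition into an omission statement: for all but one $\beta$, $C$ omits a tail $\{(\beta,m,i): m\ge n\}$.
\item Pigeonholing gives an uncountable $X$ of such $\beta$ with fixed $n$ and $i$.
\item $\clubsuit$ is applied to $\{2\beta+i:\beta\in X\}$. The parity encodes $i$, which the construction at stage $\gamma$ could not otherwise know.
\item At a guessing stage $\gamma$, column $\gamma$ is glued so that $\pi_\gamma(\gamma,m,j)=(\beta,k,i)$ with $2\beta+i\in A_\gamma$ and $k\ge m$. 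Property (6) guarantees that such elements can be chosen cofinally even when $i=0$.
\item Then $\pi_\gamma(x)\notin C$ for any $x\in C$ above $(\gamma,n,0)$, which contradicts meet-closure plus cofinality of $C\cap K_\gamma$.
\end{enumerate}

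Without this reversal, your plan has no mechanism for producing an element outside $C$. Adjusting the placement of the $\omega\times\omega$ block along a guessed chain of $C$ cannot complete it.
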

\begin{proof}
Let $E = \omega \times \{0,1\}$ and consider the following strict partial order $\prec$ on $E$: given $(n,i), (m,j) \in E$, we let $(n,i) \prec (m,j)$ if and only if $n < m$ and $j = 1$. Let $\preceq$ be its reflexive closure. It is easy to see that $(E, \preceq)$ is a lower finite join-semilattice whose elements have at most $2$ lower covers.

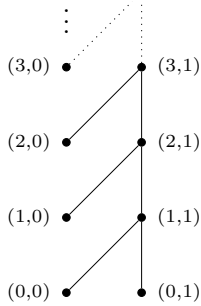
\begin{figure}[H]
\centering
\begin{tikzpicture}
    \node [draw, shape = circle, fill = black, minimum size = 0.1cm, inner sep=0pt, label={[shift={(0.5,-0.3)}]$\scriptstyle (0, 1)$},] at (1,0) (a0){};
    
    \node [draw, shape = circle, fill = black, minimum size = 0.1cm, inner sep=0pt, label={[shift={(-0.5,-0.3)}]$\scriptstyle (0, 0)$}] at (0,0) (a1){};
    \node [draw, shape = circle, fill = black, minimum size = 0.1cm, inner sep=0pt, label={[shift={(0.5,-0.3)}]$\scriptstyle (1, 1)$}] at (1,1) (a2){};
    \node [draw, shape = circle, fill = black, minimum size = 0.1cm, inner sep=0pt, label={[shift={(-0.5,-0.3)}]$ \scriptstyle (1, 0)$}] at (0,1) (a3){};
    
    \node [draw, shape = circle, fill = black, minimum size = 0.1cm, inner sep=0pt, label={[shift={(0.5,-0.3)}]$\scriptstyle (2, 1)$}] at (1,2) (a4){};
    \node [draw, shape = circle, fill = black, minimum size = 0.1cm, inner sep=0pt, label={[shift={(-0.5,-0.3)}]$\scriptstyle (2, 0)$}] at (0,2) (a5){};
    \node [draw, shape = circle, fill = black, minimum size = 0.1cm, inner sep=0pt, label={[shift={(0.5,-0.3)}]$\scriptstyle (3, 1)$}] at (1,3) (a6){};
    \node [draw, shape = circle, fill = black, minimum size = 0.1cm, inner sep=0pt, label={[shift={(-0.5,-0.3)}]$\scriptstyle (3, 0)$}] at (0,3) (a7){};
    
  \node[] at (1, 4) (a9){};
  \node[label={$\vdots$}] at (0,3.2) (a8){};

    \draw[ultra thin] (a0)--(a2)--(a4)--(a6);
    \draw[ultra thin] (a1)--(a2);
    \draw[ultra thin] (a3)--(a4);
    \draw[ultra thin] (a5)--(a6);
    \draw[dotted] (a7)--(a9);
    \draw[dotted] (a6)--(a9);
\end{tikzpicture}
\caption{Hasse diagram of $(E, \preceq)$}
\end{figure}

Fix a $\clubsuit$-sequence $\langle A_\alpha : \alpha < \omega_1 \text{ and } \alpha \text{ is limit}\rangle$. Denote the set $\{0\} \cup (\omega_1 \times E)$ by $K$ and, for each $\alpha < \omega_1$, the set $\{0\} \cup (\alpha \times E)$ by $K_\alpha$. Moreover, given $x = (\beta, n, i) \in \omega_1 \times E$, we denote $\beta$ by $\alpha(x)$.

We claim that there exists a partial order $\trianglelefteq$ on $K$ such that, for every $\alpha$:
\begin{enumerate}
\item $(K, \trianglelefteq)$ is a lower finite lattice,
\item $K_\alpha$ is an ideal of $(K, \trianglelefteq)$,
\item for every $n,m\in\omega$ and $i,j\in\{0,1\}$,
\[
(\alpha,n,i)\trianglelefteq(\alpha,m,j)
\iff (n,i)\preceq(m,j),
\]
\item $\pi_{K_\alpha}(x) \neq \pi_{K_\alpha}(y)$ for all distinct $x,y\in K$ with $\alpha(x) = \alpha(y) \ge \alpha > 0$,

\item if $\alpha$ is limit and all the elements of $A_\alpha$ have the same parity, then, letting $i \in \{0,1\}$ denote the parity of the elements of $A_\alpha$, for every $n\in\omega$ and  $j \in \{0,1\}$ there is $\beta$  with $2\beta + i \in A_\alpha$ and there is $m \ge n$ such that $\pi_{K_\alpha} (\alpha, n, j) = (\beta, m, i)$.

\end{enumerate}

Let us assume that a partial order $\trianglelefteq$ satisfying (1)-(5) exists and let us fix one, towards showing that $(K, \trianglelefteq)$ is a maximal $3$-ladder of breadth $2$. We will then define such a partial order. Note that properties (1) and (2) directly imply that $0$ is the minimum of $(K, \trianglelefteq)$.

Furthermore, let us denote $\pi_{K_\alpha}$ simply by $\pi_\alpha$, for clarity.

\begin{claim}
$(K, \trianglelefteq)$ is a $3$-ladder of breadth $2$.
\end{claim}
\begin{proof}
Let us first prove that $(K, \trianglelefteq)$ is a $3$-ladder. Since by (1) $(K, \trianglelefteq)$ is a lower finite lattice, it suffices to prove that every element of $K$ has at most three lower covers. Fix some $\alpha < \omega_1$ and some $n \in\omega$. It follows directly from (3), that $(\alpha, n, 0)$ has only one lower cover in $(K, \trianglelefteq)$, namely  $\pi_\alpha(\alpha, n, 0)$. On the other hand, $(\alpha, n, 1)$ has at most three lower covers: $(\alpha, n-1, 0)$ and $(\alpha, n-1, 1)$ (that is, if $n > 0$), and $\pi_\alpha(\alpha, n, 1)$. 
Overall, $(K, \trianglelefteq)$ is a $3$-ladder.

In order to show that $(K, \trianglelefteq)$ has breadth $2$, we need to prove that given any three elements of $\omega_1 \times E$, one of them is less than or equal to the least upper bound of the other two. The only nontrivial case to be checked is when the three elements $x,y,z \in \omega_1\times E$ satisfy $\alpha(x) \le \alpha(y) \le \alpha(z)$ and $\alpha(x) < \alpha(z)$ and the three of them are mutually $\trianglelefteq$-incomparable. By (2), the sets $K_{\alpha(z)}$ and $K_{\alpha(z)+1}$ are  ideals of $(K, \trianglelefteq)$. Then, both $x \vee z$ and $y \vee z$ must belong to $K_{\alpha(z)+1} \setminus K_{\alpha(z)}$. Equivalently, $\alpha(x \vee z) = \alpha(y \vee z) = \alpha(z)$. Moreover, it follows from (3) and from our definition of $\preceq$ that $x \vee z = (\alpha(z), n, 1)$ and $y \vee z = (\alpha(z), m, 1)$ for some $n,m\in\omega$. By (3) again, we conclude that either $x \trianglelefteq y \vee z$ (if $n \le m$) or $y \trianglelefteq x \vee z$ (if $m \le n$). Thus, $(K, \trianglelefteq)$ has breadth $2$.
\end{proof}

The next technical claim is needed to prove that $(K, \trianglelefteq)$ is a maximal $3$-ladder.
\begin{claim}\label{thm:main3:claim2}
Given a meet-subsemilattice $C$ of $(K, \trianglelefteq)$ which is also a $2$-ladder, there is at most one $\alpha < \omega_1$ such that $\{n \in \omega \mid (\alpha, n, i) \in C\}$ is infinite for both $i = 0,1$.
\end{claim}
\begin{proof}
Suppose otherwise, towards a contradiction, and fix two distinct $\alpha, \beta$ with $\alpha < \beta$ such that $\{n \in \omega \mid (\alpha, n, i) \in C\}$ and $\{n \in \omega \mid (\beta, n, i) \in C\}$ are infinite for both $i = 0,1$.

By the property of $\beta$, we can fix an $n\in\omega$ such that $(\beta, n, 0) \in C$ and there exists $m \le n$ with $(\beta, m, 1) \in C$. Let $m$ be the greatest integer $\le n$ such that $(\beta, m, 1) \in C$. Now, again by the property of $\beta$, there is some $k > n$ such that $(\beta, k, 1) \in C$. Let $k$ be the least such integer. Note that $m \le n < k$ and that $(\beta, n, 0), (\beta, m, 1)$ are lower covers of $(\beta, k, 1)$ in $C$. 

Clearly, since $(\beta, n, 0), (\beta, m, 1) \trianglelefteq (\beta, k, 1)$, we have $\pi_{\alpha+1} (\beta, n, 0), \pi_{\alpha+1} (\beta, m, 1) \trianglelefteq \pi_{\alpha+1} (\beta, k, 1)$. Moreover, by (4), $\pi_{\alpha+1} (\beta, k, 1)$, $\pi_{\alpha+1} (\beta, n, 0)$ and $\pi_{\alpha+1} (\beta, m, 1)$ are all distinct. Thus, we conclude that $\pi_{\alpha+1} (\beta, k, 1) \ntrianglelefteq (\beta, n, 0),(\beta, m, 1)$. We now claim that $\pi_{\alpha+1} (\beta, k, 1)$ belongs to $C$, which would result in a contradiction, as it would mean that $(\beta, k ,1)$, which belongs to $C$, has more than $2$ lower covers in $C$. Indeed, since $(K, \trianglelefteq)$ is lower finite by (1), there must be a lower cover $b$ of $(\beta, k, 1)$ in $C$ such that $\pi_{\alpha+1} (\beta, k, 1) \trianglelefteq b$. But since $\pi_{\alpha+1} (\beta, k, 1)$ is neither below $(\beta, n, 0)$ nor $(\beta, m, 1)$, we conclude that $b$ is distinct from both $(\beta, n, 0)$ nor $(\beta, m, 1)$, which are also lower covers of $(\beta, k, 1)$. Thus, $(\beta, k, 1)$ has at least three lower covers in $C$, a contradiction.

By (3) and the way we chose $\alpha$, $C \cap (\{\alpha\} \times E)$ is cofinal in $(\{\alpha\} \times E, \trianglelefteq)$. By (2), this in turn implies that $C \cap (\{\alpha\} \times E)$ is cofinal in $(K_{\alpha+1}, \trianglelefteq)$. 
Since $C \cap (\{\alpha\} \times E)$ is cofinal in $K_{\alpha+1}$, there exists a $y \in C \cap K_{\alpha+1}$ such that $\pi_{\alpha+1} (\beta, k, 1) \trianglelefteq y$. By assumption $C$ is a meet-subsemilattice, and thus $y \wedge (\beta, k, 1) \in C$. By Lemma~\ref{lemma:meetproj}, $y \wedge (\beta, k, 1) = y \wedge \pi_{\alpha+1}(\beta, k, 1)$. By the way we chose $y$, $y \wedge (\beta, k, 1) = \pi_{\alpha+1}(\beta, k, 1)$, and thus $\pi_{\alpha+1}(\beta, k, 1)$ belongs to $C$, as we wanted to show. The contradiction is reached.
\end{proof}

\begin{claim}\label{thm:main3:claimmaximality}
$(K, \trianglelefteq)$ is a maximal $3$-ladder.
\end{claim}
\begin{proof}
By Theorem~\ref{thm:characterization}, we need to prove that there is no cofinal meet-subsemilattice of $(K, \trianglelefteq)$ which is also a $2$-ladder. Suppose, towards a contradiction that there is such a cofinal subset and denote it by $C$.

The following is immediately from Claim~\ref{thm:main3:claim2}: for all sufficiently large $\alpha < \omega_1$ there is an $n \in \omega$ and an $i \in \{0,1\}$ such that $(\alpha, m, i) \not\in C$  for all $m \ge n$. In particular, we can pick $n\in\omega$ and $i \in \{0,1\}$ such that 
\[
X \coloneqq \{\alpha < \omega_1 : \forall m \ge n\ ((\alpha, m, i) \not\in C)\}
\] 
is uncountable. Now let
\[
Z \coloneqq \{2\alpha+i : \alpha \in X\}.
\]

It is easy to see that the set $\{\alpha < \omega_1 : C \cap K_\alpha \text{ is cofinal in }(K_\alpha, \trianglelefteq)\}$ is a club. By the properties of our $\clubsuit$-sequence, there exists a $\gamma < \omega_1$ such that $C \cap K_\gamma$ is cofinal in $(K_\gamma, \trianglelefteq)$ and $A_{\gamma} \subseteq Z$.  Fix one such $\gamma$.

By (5), for every $m \in \omega$ and $j \in \{0,1\}$ there is $\beta \in X$ and $k \ge m$ such that $\pi_{\gamma}(\gamma, m, j) = (\beta, k, i)$. In particular, $\pi_{\gamma}(\gamma, m, j) \not\in C$ for every $m \ge n$ and every $j \in \{0,1\}$.

Since $C$ is cofinal in $(K, \trianglelefteq)$ there must be  $x \in C$, such that $(\gamma, n, 0) \trianglelefteq x$. In particular, there are $m \ge n$ and $j \in \{0,1\}$ such that  $\pi_{\gamma+1}(x) = (\gamma, m, j)$. By the previous paragraph, we conclude that $\pi_\gamma \circ \pi_{\gamma+1} (x) \not\in C$. But by Lemma~\ref{lemma:concproj}, $\pi_\gamma \circ \pi_{\gamma+1} (x) = \pi_\gamma (x)$. Thus, $\pi_\gamma (x) \not\in C$.

Since $C \cap K_\gamma$ is cofinal in $(K_\gamma, \trianglelefteq)$, there exists $y \in C \cap K_\gamma$ such that $\pi_\gamma (x) \trianglelefteq y$. Since $C$ is assumed to be a meet-subsemilattice of $K$, we have  $x \wedge y \in C$. By Lemma~\ref{lemma:meetproj}, $x \wedge y = \pi_\gamma (x) \wedge y$. Thus, by the way we chose $y$, $x \wedge y = \pi_\gamma (x)$. In particular, this means that $\pi_\gamma (x)$ belongs to $C$, which is a contradiction, as in the previous paragraph we have shown that $\pi_\gamma (x)$ does not belong to $C$.
\end{proof}

We are left to construct the partial order $\trianglelefteq$ on $K$ so as to satisfy (1)-(5). Actually, our partial order $\trianglelefteq$ will satisfy an additional technical property, which is necessary to carry out the construction:
\begin{enumerate}
\item[(6)] for every $\alpha$ and $x \in K_\alpha$, $x \trianglelefteq (\alpha, n, 0)$ for infinitely many $n$.
\end{enumerate}

We define a partial order $\trianglelefteq$ on $K$ by defining ${\trianglelefteq} \upharpoonright K_\alpha$ recursively on $\alpha < \omega_1$. Assume that $\trianglelefteq$ has been defined on $K_\alpha$ so that (1)-(6) hold below $\alpha$, and we now describe how to extend $\trianglelefteq$ on $K_{\alpha+1}$. 

First suppose that $\alpha = 0$. Set $0 \trianglelefteq (0, n, i)$ for every $(n, i) \in E$. Then, the ordering $\trianglelefteq$ on $K_1 \setminus \{0\}$ is determined by (3). It immediately follows from $(E, \preceq)$ being a lower finite join-semilattice that $(K_1, \trianglelefteq)$ satisfies (1)-(3) and (6), with (4) and (5) vacuously holding. 

So we can suppose $\alpha > 0$. To proceed, we need to prove the following claim.

\begin{claim}\label{thm:main3:claim4}
There exists a $\trianglelefteq$-increasing sequence $(x_n)_{n\in\omega}$ of elements of $\alpha \times E$ which is cofinal in $K_\alpha$ and such that:
\begin{enumerate}[label={\upshape(\alph*)}]
\item $\pi_{\alpha(x_n)+1}(x_{n+1}) \neq x_n$ for every $n$, and
\item if $\alpha$ satisfies the hypotheses of {\upshape (5)}, then, letting $i \in \{0,1\}$ denote the parity of $A_\alpha$, for every $n$ there is $\beta$ with $2\beta+i \in A_\alpha$ and there is $m \ge n$ such that $x_n = (\beta, m, i)$.
\end{enumerate}
\end{claim}
\begin{proof}
We define the sequence $(x_n)_{n\in\omega}$ by induction on $n$. Suppose that $\alpha$ and $A_\alpha$ satisfy the hypotheses of (5)---otherwise the construction is simpler, as (b) vacuously holds. Let $i$ be the parity of the elements of $A_\alpha$. Moreover, fix an enumeration $(z_n)_{n\in\omega}$ of $K_\alpha$. 

First let $x_0 = (\beta_0, 0, i)$ for some $\beta_0$ such that $2\beta_0 + i \in A_\alpha$. Now suppose that $x_n$ is defined, towards defining $x_{n+1}$. Since $A_\alpha$ is unbounded in $\alpha$, we can pick a $\beta_{n+1}$ such that $2\beta_{n+1}+i \in A_\alpha$ and $\beta_{n+1} > \max\{\alpha (x_n), \alpha (z_n)\}$. 

Fix any $z \in K_\beta$ such that $z_n \trianglelefteq z$ and $x_n \lhd \pi_{\alpha (x_n)+1}(z)$---for example, pick any $w \rhd x_n$ with $\alpha(w) = \alpha(x_n)$ and let $z = w \vee z_n$.  We claim that there exists an $m > n$ such that $z \trianglelefteq (\beta_{n+1}, m, i)$. If $i = 1$, such an $m$ exists because the set $\{(\beta_{n+1}, m, 1) \mid m \in \omega\}$ is a cofinal chain in $K_{\beta_{n+1}+1}$. If $i = 0$ instead, such an $m$ exists because we are assuming that (6) holds in $K_\alpha$ which implies, in particular, that there are infinitely many $m$ with $z \trianglelefteq (\beta_{n+1}, m, 0)$. Fix an $m$ that satisfies our claim and let $x_{n+1} = (\beta_{n+1}, m, i)$. Note that $x_n \lhd \pi_{\alpha(x_n) + 1} (z)  \trianglelefteq \pi_{\alpha(x_n) +1}(x_{n+1})$. In particular, $x_n \neq \pi_{\alpha(x_n) +1}(x_{n+1})$. This ends the inductive definition of the sequence  $(x_n)_{n\in\omega}$.

The sequence $(x_n)_{n\in\omega}$ is $\trianglelefteq$-increasing and cofinal in $K_\alpha$ by construction. Moreover, we have argued that (a) holds at the end of the previous paragraph, and (b) holds again by construction. 
\end{proof}

Fix a sequence as in Claim~\ref{thm:main3:claim4}.  Note that (a) implies that the $x_n$s are actually strictly increasing, that is $x_n \lhd x_{n+1}$ for every $n$. Now extend $\trianglelefteq$ on $K_{\alpha+1}$ by letting:
\begin{itemize}
\item $(\alpha, n, i) \trianglelefteq (\alpha, m, j)$ if and only if $(n, i) \preceq (m, j)$, and
\item for every $y \in K_\alpha$, $y \trianglelefteq (\alpha, n, i)$ if and only if $y \trianglelefteq x_{2n+i}$.
\end{itemize}

Let us show  that the extension of $\trianglelefteq$ on $K_{\alpha+1}$ still satisfies properties (1)-(6).

First note that $(K_{\alpha+1}, \trianglelefteq)$ is a poset. This follows easily from the way we defined the extension of $\trianglelefteq$ on $K_{\alpha+1}$ and from $(K_\alpha, \trianglelefteq)$ being a poset by induction hypothesis. 

Properties (2), (3) and (6) follow directly from the definition of our extension. Moreover, $\trianglelefteq$ is lower finite on $K_{\alpha+1}$: indeed, for every $(n,i) \in E$,
\[
{\trianglelefteq} \downarrow (\alpha, n, i) = ({\trianglelefteq} \downarrow x_{2n+i}) \cup \{(\alpha, m, j) : (m,j) \preceq (n, i)\},
\]
and the set on the right hand side is finite since $(E, \preceq)$ is lower finite and, by induction hypothesis, ${\trianglelefteq} \upharpoonright K_\alpha$ is lower finite.

We now show that $(K_{\alpha+1}, \trianglelefteq)$ is a join-semilattice. Pick $x,y \in K_{\alpha+1}$, towards showing that they have a least upper bound. There are two non-trivial cases to check: when $\alpha(x), \alpha(y) < \alpha$ and when $\alpha(x) < \alpha(y) = \alpha$. If $\alpha(x),\alpha(y) < \alpha$, then $x$ and $y$ have a least upper bound $x \vee y$ in $(K_\alpha, \trianglelefteq)$, by induction hypothesis. Now suppose that $x,y \trianglelefteq (\alpha, n, i)$ for some $n\in\omega$ and $i \in \{0,1\}$. By definition, $x, y \trianglelefteq x_{2n+i}$, which means $x \vee y \trianglelefteq x_{2n+i}$, and therefore $x \vee y \trianglelefteq (\alpha, n, i)$. Hence, the least upper bound of $x$ and $y$ in $(K_\alpha, \trianglelefteq)$ is still their least upper bound in $(K_{\alpha+1}, \trianglelefteq)$. 

Now suppose that $\alpha(x) < \alpha(y) = \alpha$. If $x \trianglelefteq y$ then there is nothing to prove, so we can assume $x \ntrianglelefteq y$. Let $(n,i) \in E$ be such that $y = (\alpha, n, i)$. Let $m$ be the least positive integer strictly greater than $n$ such that $x \trianglelefteq x_{2m+1}$---note that such an $m$ exists because the set $\{x_k \mid k \in \omega\}$ is a cofinal chain in $K_\alpha$.  We claim that $(\alpha, m, 1)$ is the $\trianglelefteq$-least upper bound of $x$ and $y = (\alpha, n, i)$. Since $(n, i) \prec (m, 1)$, we conclude that $(\alpha, m, 1)$ is an $\trianglelefteq$-upper bound of $x$ and $y$. Now pick $(k, j) \in E$ with $x,y \trianglelefteq (\alpha, k, j)$, towards showing $(\alpha, m, 1) \trianglelefteq (\alpha, k, j)$ or, equivalently, $(m, 1) \preceq (k, j)$. 

From $y = (\alpha, n, i) \trianglelefteq (\alpha, k ,j)$---equivalently, $(n, i) \preceq (k, j)$---it follows that either $(n, i) = (k, j)$ or that $j = 1$ and $k > n$. The former possibility is excluded by $x \trianglelefteq (\alpha, k, j)$, as it would mean $x \trianglelefteq y$. Thus, it must be the case that $j = 1$ and $k > n$. By the minimality of $m$, $m \le k$, and therefore $(\alpha, m, 1) \trianglelefteq (\alpha, k, j) = (\alpha, k, 1)$ as we wanted to show.

Being a lower finite join-semilattice with a least element, $(K_{\alpha+1}, \trianglelefteq)$ is a lower finite lattice, and therefore (1) is also satisfied. 

To show that condition (4) is satisfied by our extension, we just need to prove that, given some $\beta \le \alpha$ and two distinct $(n,i), (m,j) \in E$, $\pi_\beta(\alpha, n, i) \neq \pi_\beta(\alpha, m, j)$. By Lemma~\ref{lemma:concproj}, $\pi_\beta = \pi_\beta \circ \pi_\alpha$. Since, by definition of $\trianglelefteq$, $\pi_\alpha(\alpha, n, i) = x_{2n+i}$, we must prove that $\pi_\beta(x_{2n+i}) \neq \pi_\beta(x_{2m+j})$. 

Suppose without loss of generality that $2n+i > 2m+j$. Let $\gamma = \alpha(x_{2m+j})$ and $\delta = \alpha(x_{2n+i})$. Note that $\gamma \le \delta$, since $x_{2m+j} \trianglelefteq x_{2n+i}$. There are three cases to consider:
\begin{description}
\itemsep0.3em
\item[$\beta > \delta$] in this case $\pi_\beta(x_{2n+i}) = x_{2n+i}$ and $\pi_\beta(x_{2m+j}) = x_{2m+j}$. Since $x_{2m+j} \lhd x_{2n+i}$, it holds trivially that $\pi_\beta(x_{2n+i}) \neq \pi_\beta(x_{2m+j})$.

\item[$\gamma < \beta \le \delta$] since $\gamma < \beta$, we have $\pi_\beta(x_{2m+j}) = x_{2m+j}$. By repeated applications of (a) from Claim~\ref{thm:main3:claim4}, $x_{2m+j} \lhd \pi_{\gamma+1}(x_{2n+i})$. From $\gamma+1 \le \beta$ it follows $\pi_{\gamma+1}(x_{2n+i}) \trianglelefteq \pi_\beta(x_{2n+i})$. Thus, $x_{2m+j} \lhd \pi_{\beta}(x_{2n+i})$. In particular, $\pi_\beta(x_{2m+j}) \neq \pi_\beta(x_{2n+i})$.

\item[$\beta \le \gamma$] by repeated applications of (a) from Claim~\ref{thm:main3:claim4}, $x_{2m+j} \lhd \pi_{\gamma+1}(x_{2n+i})$. Note that  $\alpha(\pi_{\gamma+1}(x_{2n+i})) = \alpha(x_{2m+j}) = \gamma$. By Lemma~\ref{lemma:concproj}, $\pi_\beta = \pi_\beta \circ \pi_{\gamma+1}$. Thus, $\pi_\beta(x_{2n+i}) = \pi_\beta \circ \pi_{\gamma+1}(x_{2n+i})$. But since, as we already observed, $\pi_{\gamma+1}(x_{2n+i}) \neq x_{2m+j}$ and $\alpha(\pi_{\gamma+1}(x_{2n+i})) = \alpha(x_{2m+j}) \ge \beta$, we conclude \[\pi_\beta(x_{2n+i}) = \pi_\beta \circ \pi_{\gamma+1}(x_{2n+i}) \neq \pi_\beta (x_{2m+j}),\] since by induction hypothesis (4) holds in $K_\alpha$.
\end{description}
Thus, in all three cases, $\pi_\beta(x_{2n+i}) \neq \pi_\beta(x_{2m+j})$, as we wanted to show. Overall, (4) holds also in $(K_{\alpha+1}, \trianglelefteq)$.

Finally, also (5) is satisfied by $(K_{\alpha+1}, \trianglelefteq)$. Indeed, as we already noted, for every $n$ and $j$, $\pi_\alpha(\alpha, n, j) = x_{2n+j}$ holds by construction. Moreover, if the hypotheses of (5) are satisfied by $\alpha$ and $A_\alpha$, then, by (b), letting $i$ be the parity of the elements of $A_\alpha$, for every $n \in \omega$ and $j \in \{0,1\}$, there is a $\beta$ with $2\beta + i \in A_\alpha$ and $m \ge n$ such that  $x_{2n+j} = (\beta, m, i)$.

This finishes the inductive definition of $\trianglelefteq$ and the proof.
\end{proof}

\section{Maximality and destructibility}\label{sec:diamond}

\begin{definition}
Let $L$ be a maximal $n$-ladder, where $n > 0$, and let $\mathbb{P}$ be a forcing notion. We say that $L$ is \emph{$\mathbb{P}$-indestructible} if $\Vdash_\mathbb{P} ``\check{L}$ is maximal"; otherwise, $L$ is \emph{$\mathbb{P}$-destructible}.
\end{definition}

The notion of destructibility by a forcing stems naturally from the work of Wehrung \cite{MR2609217} and from our proof of Theorem~\ref{thm:main1}. Our Proposition~\ref{prop:wehrung} together with Theorem~\ref{thm:characterization} directly imply that every maximal $n$-ladder of cardinality $\aleph_k$ with $n > k+1$ is $\text{Add}(\omega, \omega_k)$-destructible. In particular, every maximal $3$-ladder of cardinality $\aleph_1$ is $\text{Add}(\omega, \omega_1)$-destructible. 

What about maximal $3$-ladders of cardinality $\aleph_2$? By the preceding paragraph, these maximal $3$-ladders are $\text{Coll}(\omega_1, \omega_2) \times \text{Add}(\omega, \omega_1)$-destructible. In other words, we can destroy any maximal $3$-ladder, regardless of the cardinality, by first collapsing $\omega_2$ to $\omega_1$ and then adding $\aleph_1$-many Cohen reals. 

In general, we cannot destroy the maximality of a $3$-ladder of cardinality $\aleph_1$ by adding less than $\aleph_1$-many Cohen reals. Indeed, a slight modification of Claim~\ref{thm:main3:claimmaximality} shows that the maximal $3$-ladder (of breadth $2$) constructed in Section~\ref{sec:club} under $\clubsuit$ is $\text{Add}(\omega, 1)$-indestructible. 

On the other hand, the existence of an $\text{Add}(\omega, 1)$-destructible maximal $3$-ladder is consistent. In other words, it is consistent that there is a maximal $3$-ladder which is destructible by adding a single Cohen real. Consider for example the following statement, which is a direct corollary of Theorem~\ref{thm:Kmaximal} (see Section~\ref{sec:dominating} for the definition of $\trianglelefteq_\varrho$):

\begin{lemma}\label{lemma:bounding}
If $(K, \trianglelefteq_\varrho)$ is a maximal $3$-ladder, then $(K, \trianglelefteq_\varrho)$  is $\mathbb{P}$-destructible if and only if $\mathbb{P}$ is not ${}^\omega \omega$-bounding\footnote{A forcing $\mathbb{P}$ is ${}^\omega \omega$-bounding if $V \cap {}^\omega \omega$ is dominating in every $\mathbb{P}$-generic extension $V[G]$.}.
\end{lemma}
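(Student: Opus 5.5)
The plan is to apply Theorem~\ref{thm:Kmaximal} inside the generic extension. The point is that the properties of $\varrho$ making $(K,\trianglelefteq_\varrho)$ a $3$-ladder are absolute. Being a join-semilattice is equivalent, by Lemma~\ref{lemma:Kjoin}, to ($\varrho$1)--($\varrho$4); being lower finite is equivalent, by Lemma~\ref{lemma:Klowerfinite}, to the finiteness condition; and these yield a $3$-ladder by Lemma~\ref{lemma:Kladder}. All of these are arithmetic statements about the fixed object $\varrho$ and countable ordinals below $\omega_1^V$.

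The one subtlety is that $\omega_1$ might be collapsed in $V[G]$. In that case Theorem~\ref{thm:Kmaximal} cannot be quoted verbatim, since $K$ is then indexed by a countable ordinal. I would handle this first. If $\omega_1^V$ is countable in $V[G]$, then $K$ is countable there, hence not maximal by Theorem~\ref{thm:Ditormax}. So $(K,\trianglelefteq_\varrho)$ is destructible. Moreover $\mathbb{P}$ is then not ${}^\omega\omega$-bounding: $\{\varrho(0,\alpha):\alpha<\omega_1^V\}$ is dominating in $V$ by the forward direction of Theorem~\ref{thm:Kmaximal}, so if $V\cap{}^\omega\omega$ were still dominating, $V[G]$ would have a countable dominating family, contradicting $\sigma$-directedness of $<^*$. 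With this case disposed of, we may assume $\omega_1$ is preserved, so that $(K,\trianglelefteq_\varrho)$ in $V[G]$ is literally of the form considered in Section~\ref{sec:dominating} with the same $\varrho$.

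In that case, Theorem~\ref{thm:Kmaximal} applied in $V$ gives that $\mathcal F=\{\varrho(0,\alpha):\alpha<\omega_1\}$ is dominating in $V$. Applied in $V[G]$, it gives that $K$ stays maximal if and only if $\mathcal F$ is dominating in $V[G]$. It remains to show that $\mathcal F$ is dominating in $V[G]$ if and only if $V\cap{}^\omega\omega$ is dominating in $V[G]$.
\begin{itemize}
\item If $\mathcal F$ is dominating in $V[G]$, then so is the larger family $V\cap{}^\omega\omega$.
\item Conversely, suppose $V\cap{}^\omega\omega$ is dominating in $V[G]$ and take $h\in V[G]$. Choose $g\in V$ with $h<^*g$, and then $\alpha$ with $g<^*\varrho(0,\alpha)$, using that $\mathcal F$ is dominating in $V$. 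By transitivity, $h<^*\varrho(0,\alpha)$.
\end{itemize}

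Combining the two cases, $(K,\trianglelefteq_\varrho)$ is $\mathbb P$-destructible exactly when some generic extension fails to have $V\cap{}^\omega\omega$ dominating, i.e.\ exactly when $\mathbb P$ is not ${}^\omega\omega$-bounding. (Here ``$\Vdash$ maximal'' fails iff some condition forces non-maximality iff some generic extension is not bounded.)

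The main obstacle is only the bookkeeping about $\omega_1$ and the absoluteness of the ladder conditions; the core of the argument is Theorem~\ref{thm:Kmaximal} together with transitivity of domination.
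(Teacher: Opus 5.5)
Your proposal is correct and follows the paper's argument. You apply Theorem~\ref{thm:Kmaximal} in $V$ to get that $\{\varrho(0,\alpha):\alpha<\omega_1\}$ is dominating, apply it again in $V[G]$, and use transitivity of $<^*$ to show this family is dominating in $V[G]$ exactly when $V\cap{}^\omega\omega$ is. Your separate treatment of the case where $\omega_1^V$ is collapsed, via Theorem~\ref{thm:Ditormax} and $\sigma$-directedness, is a welcome extra: the paper's proof passes over this silently, although Theorem~\ref{thm:Kmaximal} only applies literally in $V[G]$ when $\omega_1$ is preserved.
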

\begin{proof}
Fix a map $\varrho:[\omega_1]^2 \rightarrow {}^\omega \omega$ such that $(K, \trianglelefteq_\varrho)$ is a maximal $3$-ladder. By Theorem~\ref{thm:Kmaximal}, $(K, \trianglelefteq_\varrho)$ is $\mathbb{P}$-destructible if and only if there is  $p \in \mathbb{P}$ such that
\begin{equation}\label{eq:bounding}
p \Vdash \{\varrho(0, \alpha): \alpha < \omega_1^V\} \text{ is not dominating}.
\end{equation}
By Theorem~\ref{thm:Kmaximal}, we know that $\{\varrho(0, \alpha): \alpha < \omega_1\}$ is dominating. In particular, \eqref{eq:bounding} is equivalent to 
\[
p \Vdash V \cap {}^\omega \omega \text{ is not dominating}.
\]
This shows that $(K, \trianglelefteq_\varrho)$ is $\mathbb{P}$-destructible if and only if $\mathbb{P}$ is not ${}^\omega \omega$-bounding.
\end{proof}

Now, by Theorem~\ref{thm:Kexistence}, $\mathfrak{d}=\aleph_1$ implies the existence of a map $\varrho:[\omega_1]^2\rightarrow {}^\omega \omega$ such that $(K, \trianglelefteq_\varrho)$ is a maximal $3$-ladder. Since $\text{Add}(\omega, 1)$ is well-known not to be ${}^\omega \omega$-bounding, we conclude from Lemma~\ref{lemma:bounding} that, consistently, there is a maximal $3$-ladder which is $\text{Add}(\omega, 1)$-destructible.

So, to summarize what we have said until this point in this section: 1) every maximal $3$-ladder of cardinality $\aleph_1$ is $\text{Add}(\omega, \omega_1)$-destructible; 2) every maximal $3$-ladder is $\text{Coll}(\omega_1, \omega_2) \times \text{Add}(\omega, \omega_1)$-destructible; 3) there are consistent examples of $\text{Add}(\omega, 1)$-indestructible maximal $3$-ladders of cardinality $\aleph_1$; 4) there are consistent examples of   $\text{Add}(\omega, 1)$-destructible maximal $3$-ladders.

A natural question arises at this point: do we need to generically add new reals in order to destroy the maximality of an $n$-ladder? Lemma~\ref{lemma:bounding} tells us that sometimes we really do need to add new reals. However, in this section we prove Theorem~\ref{thm:main4}, which answers the question in the negative: consistently, there exists a maximal $3$-ladder which is destructible by an $\aleph_0$-distributive forcing.

\begin{theorem}
If $\diamondsuit$ holds, then there exists a maximal $3$-ladder which is $T$-destructible for some Suslin tree $T$.
\end{theorem}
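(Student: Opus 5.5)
We plan to construct, under $\diamondsuit$, a Suslin tree $T$ and a $3$-ladder $L$ of cardinality $\aleph_1$ together, by a single recursion of length $\omega_1$. We will arrange that $T$ forces a cofinal meet-subsemilattice of $L$ that is a $2$-ladder, while $\diamondsuit$ kills every such subsemilattice lying in the ground model. By Theorem~\ref{thm:characterization}, $L$ is then maximal in $V$ and fails to be maximal in $V^T$.

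For $L$ we take the underlying set $K=\{0\}\cup(\omega_1\times E)$ of Section~\ref{sec:club}, or a variant with levels of height $\omega$. We keep the inductive scaffolding there: the initial segments $K_\alpha$ are ideals, and at stage $\alpha$ the new level is attached through a cofinal increasing sequence $(x_n)_{n\in\omega}$ in $K_\alpha$ by setting $\pi_\alpha(\alpha,n,i)=x_{2n+i}$. As in the proof of Claim~\ref{thm:main3:claimmaximality}, we want an analogue of Claim~\ref{thm:main3:claim2}, and the level-$\alpha$ points will be placed to realize it.

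The new ingredient is the link to $T$. We build $T\subseteq{}^{<\omega_1}\omega$ level by level, together with a map assigning to each $t\in T(\alpha)$ (for $\alpha$ a limit) a set $C_t\subseteq K_\alpha$. The intended properties are these:
\begin{itemize}
\item $C_t$ is a meet-subsemilattice of $K_\alpha$ that is a $2$-ladder and is cofinal in $K_\alpha$;
\item $s<t$ implies $C_s=C_t\cap K_{\mathrm{ht}(s)}$;
\item every element of $C_t$ has all its lower covers (in $C_t$) of the form $\pi_\beta$ of itself, or its predecessor in a level chain, as in the proof of Theorem~\ref{thm:Kmaximal}.
\end{itemize}
A generic branch $b$ through $T$ then yields $C_b=\bigcup_{t\in b}C_t$, which is cofinal, meet-closed and a $2$-ladder. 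Since $T$ is $\omega$-distributive, $K$ is unchanged, so $L$ is destroyed in $V^T$.

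At limit $\alpha$ the sequence $(x_n)$ must be chosen so that each branch $t$ through $T\restriction\alpha$ that we keep can be extended. That is, $C_t\cup\{$suitable points of level $\alpha\}$ must remain a meet-subsemilattice $2$-ladder. Concretely, we require the $x_n$ to lie in $C_t$ for densely many $t$, and we extend only those branches. We then use $\diamondsuit$ in two ways.

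\begin{enumerate}
\item \emph{Suslinity.} The standard argument: at stage $\alpha$, if the guess $S_\alpha$ codes a maximal antichain of $T\restriction\alpha$, each added node lies above an element of it.
\item \emph{Maximality in $V$.} If $S_\alpha$ codes a cofinal meet-subsemilattice $D\cap K_\alpha$ that is a $2$-ladder (guessed on a stationary set where $D\cap K_\alpha$ is cofinal), we choose $(x_n)$ to make $D$ fail, as in Claim~\ref{thm:main3:claimmaximality}. That is, we arrange $\pi_\alpha(\alpha,n,j)\notin D$ for all large $n$. The final meet argument with $\pi_\alpha(x)$ and a $y\in D\cap K_\alpha$ above it then contradicts the meet-closure of $D$.
\end{enumerate}

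Each countable branch $C_t$ is a countable $2$-ladder and so carries no such obstruction. The point is that $D$ is a single ground-model set, whereas the branches of $T$ are many.

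The main obstacle is reconciling these two demands at the same limit stage. We must pick $(x_n)$ avoiding the guessed $D$ while still lying in $C_t$ for the branches $t$ we extend. Moreover, the extended branches must meet the guessed antichain densely enough, via a pre-dense family of nodes. We would first try to show that any countable cofinal $2$-ladder $D\cap K_\alpha$ differs from densely many $C_t$ cofinally often. For this we would use a version of Claim~\ref{thm:main3:claim2}: two distinct cofinal $2$-ladders must eventually diverge on some cofinal chain. We then choose $x_n\in C_{t}\setminus D$ along a countable dense set of branches, interleaving them. Verifying that meet-closure and the two-lower-cover bound survive under the new level for each kept branch would be the routine but delicate part, handled as in the lower-cover computation at the end of the proof of Theorem~\ref{thm:Kmaximal}.
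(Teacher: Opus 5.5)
Your architecture matches the paper's: a Suslin tree whose nodes carry approximations to a cofinal meet-subsemilattice $2$-ladder, $\diamondsuit$ used both to seal antichains and to kill every ground-model cofinal $2$-ladder, and Theorem~\ref{thm:characterization} on both sides. But the step you flag as the main obstacle is the actual content of the proof, and you leave it open. The lemma you propose for it would not do the job.

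Consider what is forced at a limit $\alpha$ where $D\cap K_\alpha$ is guessed. Suppose a branch $b$ through $T\upharpoonright\alpha$ is continued, and let $C$ be its final set. Then $C$ is meet-closed and $C\cap K_\alpha=C_b$ is cofinal in $K_\alpha$. Take any $w\in C$ of level $\ge\alpha$ and any $y\in C_b$ above $\pi_\alpha(w)$. By Lemma~\ref{lemma:meetproj}, $\pi_\alpha(w)=w\wedge y\in C$. Since $C$ must be cofinal, $C_b$ has to contain infinitely many of the $x_n$, and a tail of the $x_n$ must avoid $D$. So what you need is the following: above every node below $\alpha$ there is a branch $b$ with $C_b\setminus D$ cofinal in $K_\alpha$.

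The statement ``two distinct cofinal $2$-ladders diverge on some cofinal chain'' does not give this. The divergence can lie entirely in $D\setminus C_b$: the guess may simply contain $C_b$. Unless the tree is built to split over options that no single $2$-ladder can contain together, the guess may contain $C_b$ for every $b$ through a given node. This property must be \emph{built into} the tree, together with a structural reason why one $2$-ladder cannot absorb all the options the tree offers.

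Reusing the levels $E$ of Section~\ref{sec:club} does not help. Property (4) there makes the projections of distinct points on one level distinct, so different choices on a level impose different requirements below.

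The paper supplies exactly these missing ingredients.
\begin{enumerate}
\item \emph{Level design.} The levels are copies of $\omega\times\{\bot,0,1\}$ with rungs $(n,\bot)\prec(n,0),(n,1)\prec(n+1,\bot)$. Property (4) of that construction says everything in $K_\alpha$ below $(\alpha,n,b)$ is below $(\alpha,n,\bot)$. So the two alternatives $(\alpha,n,0)$ and $(\alpha,n,1)$ share a projection, and adding either one to a node's set costs nothing below.
\item \emph{Shape of each $\Gamma(x)$.} Each $\Gamma(x)$ contains a tail of the $\bot$-chain of every level $\le\mathrm{ht}(x)$ (property (10), which gives cofinality). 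It contains at most one alternative per rung (property (11), which keeps it a $2$-ladder).
\item \emph{Splitting.} At successor levels the tree splits so that above any node, for every large rung and each $i\in\{0,1\}$, some extension takes alternative $i$ (property (13)).
\item \emph{Why the guess cannot absorb the options.} Claim~\ref{claim:5} shows that a meet-subsemilattice $2$-ladder has at most one $\beta$ with $K_\beta\setminus C$ not cofinal. Otherwise some $(\nu,M,\bot)\in C$ would have three lower covers in $C$. Since $(\beta,k,0)\wedge(\beta,k,1)=(\beta,k,\bot)$, on all but one level one finds above any given point an alternative $(\beta,k,b)$ with $b\in\{0,1\}$ outside the guessed set.
\item \emph{Diagonal construction at limit stages.}
\begin{enumerate}
\item At step $n$, pick $p_n=(\alpha_n,k_n,b_n)$ outside the guess, with $k_n$ beyond the (13)-thresholds of the finitely many chains $y^m_{n-m-1}$ built so far.
\item Extend each of those chains by (13) to a node of height $\alpha_n$ whose $\Gamma$ contains $p_n$.
\item Attach the new level so that $\pi_\alpha(\alpha,n,b)=p_n$.
\end{enumerate}
Each new node's $\Gamma$ then contains a tail of the $p_m$, so a tail of the new $\bot$-chain can be added to it. Meanwhile no $p_m$ lies in the guessed set, and this is what kills the guess in the maximality argument.
\end{enumerate}
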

\begin{proof}
Denote the set $\omega \times \{\bot, 0, 1\}$ by $D$. We define the following strict partial order $\prec$ on $D$: for all distinct $(n, a), (m, b) \in D$ with $n \le m$, let $(n, a) \prec (m, b)$ if and only if $a = \bot$ or $n < m$. Let $\preceq$ be the reflexive closure of $\prec$. Note that $(D, \preceq)$ is a countable $2$-ladder.

\begin{figure}[H]
\centering
\begin{tikzpicture}
    \node [draw, shape = circle, fill = black, minimum size = 0.1cm, inner sep=0pt, label={[shift={(0,-0.6)}]$\scriptstyle (0, \bot)$},] at (0,-2) (a0){};
    
    \node [draw, shape = circle, fill = black, minimum size = 0.1cm, inner sep=0pt, label={[shift={(-0.3,0)}]$\scriptstyle (0, 0)$}] at (-1.2,-1) (a1){};
    \node [draw, shape = circle, fill = black, minimum size = 0.1cm, inner sep=0pt, label={[shift={(0.3,0)}]$\scriptstyle (0, 1)$}] at (1.2,-1) (a2){};
    \node [draw, shape = circle, fill = black, minimum size = 0.1cm, inner sep=0pt, label={[shift={(0,-0.75)}]$ \scriptstyle (1, \bot)$}] at (0,0) (a3){};
    
    \node [draw, shape = circle, fill = black, minimum size = 0.1cm, inner sep=0pt, label={[shift={(-0.3,0)}]$\scriptstyle (1, 0)$}] at (-1.2,1) (a4){};
    \node [draw, shape = circle, fill = black, minimum size = 0.1cm, inner sep=0pt, label={[shift={(0.3,0)}]$\scriptstyle (1, 1)$}] at (1.2,1) (a5){};
    
     \node [draw, shape = circle, fill = black, minimum size = 0.1cm, inner sep=0pt, label={[shift={(0,-0.75)}]$\scriptstyle (2, \bot)$}] at (0,2) (a6){};
     \node [ label={$\vdots$}] at (0,2.3) (a7){};
     \node [ ] at (-1.2,3) (a8){};
     \node [ ] at (1.2,3) (a9){};

    \draw[ultra thin] (a0)--(a1)--(a3)--(a2)--(a0);
	\draw[ultra thin] (a3)--(a4)--(a6)--(a5)--(a3);
	\draw[dotted] (a6)--(a8);
	\draw[dotted] (a6)--(a9);
\end{tikzpicture}
\caption{Hasse diagram of $(D, \preceq)$}
\end{figure}

Denote the sets $\omega_1 \times D$ and $\alpha \times D$, for some $\alpha < \omega_1$, by $K$ and $K_\alpha$, respectively. Fix a $\diamondsuit$-sequence $\langle A_\alpha : \alpha \in \text{Lim}(\omega_1)\rangle$ and a surjection $\psi: \omega_1 \rightarrow K$. Moreover, given $z = (\beta, n, a)\in K$, we let $\alpha(z)$ be $\beta$---i.e., it is the canonical projection to the first coordinate. 

We define a partial order $\trianglelefteq$ on $K$ such that, for every $\alpha$:
\begin{enumerate}
\item $(K, \trianglelefteq)$ is a lower finite lattice,
\item if $\alpha > 0$, then $K_\alpha$ is an ideal of $(K, \trianglelefteq)$,
\item for every $(n, b), (m ,c) \in D$, 
\[
(\alpha, n, b) \trianglelefteq (\alpha, m, c) \iff (n, b) \preceq (m, c),
\]
\item for every $x \in K_\alpha$ and $(n, b) \in D$, if $x \lhd (\alpha, n, b)$, then $x \trianglelefteq (\alpha, n, \bot)$,
\item if $\alpha$ is limit and  $\psi'' A_\alpha \subseteq K_\alpha$ is a cofinal meet-subsemilattice of $(K_\alpha, \trianglelefteq)$ which is also a $2$-ladder, then $\pi_{K_\alpha}(\alpha, n, \bot) \not\in \psi`` A_\alpha$ for all $n\in\omega$,
\end{enumerate}

We proceed in three steps: first we show that any $\trianglelefteq$ satisfying (1)–(5) yields a maximal $3$-ladder; next we introduce a Suslin tree $T$ and a map $\Gamma$ whose properties ensure $T$-destructibility; finally, we construct all three objects simultaneously.

For clarity, let us denote $\pi_{K_\alpha}$ simply by $\pi_\alpha$.

\begin{claim}
$(K, \trianglelefteq)$ is a maximal $3$-ladder.
\end{claim}
\begin{proof}
Let us first show that $(K, \trianglelefteq)$ is a $3$-ladder. By (1), it suffices to prove that every element has at most three lower covers. Fix some $\alpha < \omega_1$ and some $n \in\omega$. It follows directly from (2) and (4) that for $i= 0, 1$ the element $(\alpha, n, i)$ has only one lower cover in $(K, \trianglelefteq)$, namely  $(\alpha, n, \bot)$. Moreover, $(\alpha, n, \bot)$ has at most three lower covers: $(\alpha, n-1, 0)$ and $(\alpha, n-1, 1)$ (if $n > 0$), and $\pi_\alpha(\alpha, n, \bot)$ (if either $n = 0$ or $\pi_\alpha(\alpha, n, \bot) \ntrianglelefteq (\alpha, n-1, \bot)$). Overall, $(K, \trianglelefteq)$ is a $3$-ladder.

We are left to prove that $(K, \trianglelefteq)$ is maximal. Suppose towards a contradiction that it is not. By Theorem~\ref{thm:characterization}, we can fix a cofinal meet-subsemilattice $C\subseteq K$ which is also a $2$-ladder in its induced ordering. It is easy to see that the set $\{\alpha < \omega_1: C \cap K_\alpha \text{ is cofinal in } (K_\alpha, \trianglelefteq)\}$ is a club. By the properties of our $\diamondsuit$-sequence, there is a limit $\alpha < \omega_1$ such that $\psi`` A_\alpha = C \cap K_\alpha$ and $C \cap K_\alpha$ is cofinal in $(K_\alpha, \trianglelefteq)$. Fix one such $\alpha$. 

Since $C$ is cofinal in $K$, there must be some $p \in C$ such that $(\alpha, 0, \bot) \trianglelefteq p$.  Moreover, since $C \cap K_\alpha$ is cofinal in $(K_\alpha, \trianglelefteq)$, there exists some $q \in C \cap  K_\alpha$ such that $\pi_{\alpha}(p) \trianglelefteq q$. By Lemma~\ref{lemma:meetproj}, $p \wedge q = \pi_\alpha(p) \wedge q$. But since $\pi_\alpha (p) \trianglelefteq q$, we have $p \wedge q = \pi_\alpha (p)$.  Now, since we are assuming $C$ to be a meet-subsemilattice, and since both $p$ and $q$ belong to $C$, we conclude that $\pi_\alpha(p) \in C$. However, as we are going to show, this contradicts (5).

Let $n\in\omega$  be such that $\pi_{\alpha+1}(p) =  (\alpha, n, a)$ for some $a \in \{\bot, 0, 1\}$. By Lemma~\ref{lemma:concproj}, $\pi_\alpha (p) = \pi_\alpha \circ \pi_{\alpha+1} (p) = \pi_\alpha (\alpha, n, a)$. By (4), $\pi_{\alpha}(\alpha, n, a) = \pi_{\alpha}(\alpha, n, \bot)$. Thus, $\pi_\alpha (p) = \pi_\alpha (\alpha, n, \bot)$. By (5), $\pi_\alpha (\alpha, n, \bot) \not\in C$ and thus $\pi_\alpha (p) \not\in C$. However, in the previous paragraph, we have shown that $\pi_\alpha(p) \in C$. Contradiction.
\end{proof}

Let us also prove the following auxiliary claim, which is useful for our construction.

\begin{claim}\label{claim:5}
If $C\subseteq K$ is a meet-subsemilattice which is also a $2$-ladder, then there is at most one $\alpha < \omega_1$, such that $K_{\alpha} \setminus C$ is not cofinal in $(K_{\alpha}, \trianglelefteq)$.
\end{claim}
\begin{proof}
Suppose towards a contradiction that this is not the case. Pick $\alpha, \beta$ with $\alpha < \beta$ such that $K_{\alpha} \setminus C$ is not cofinal in $(K_{\alpha}, \trianglelefteq)$, and the analogous statement holds for $\beta$. Note that $C \cap K_\alpha$ must be cofinal in $(K_\alpha, \trianglelefteq)$---otherwise, $K_\alpha \setminus C$ would certainly be cofinal in  $(K_\alpha, \trianglelefteq)$. 

Since $K_{\beta} \setminus C$ is not cofinal in $(K_{\beta}, \trianglelefteq)$, there is $p \in K_\beta$ such that $q \in C$ for all $q \in K_\beta$ with $q \trianglerighteq p$. If we let $\nu$ be $\max \{\alpha(p), \alpha\}$, it is easy to see that we can pick $N \in\omega$ such that $(\nu, m, b) \in C$ for all $b \in \{\bot, 0, 1\}$ and $m \ge N$. 

Let us show that there exists an $M > N$  such that $\pi_{\alpha}(\nu, M, \bot) \ntrianglelefteq (\nu, M-1, 0)$---note that by (4) this is equivalent to $\pi_{\alpha}(\nu, M, \bot) \ntrianglelefteq (\nu, M-1, 1)$. Fix any $q \in K_\alpha$ such that $q \ntrianglelefteq (\nu, N, \bot)$---such a $q$ certainly exists because $K_\alpha$ is infinite and $\trianglelefteq$ is lower finite. Properties (2) and (4) of $\trianglelefteq$ imply that there is $M > N$ such that $q \vee (\nu, N, \bot) = (\nu, M, \bot)$. Since $q \trianglelefteq (\nu, M, \bot)$  and $q \ntrianglelefteq (\nu, M-1, 0)$, we conclude that $\pi_{\alpha}(\nu, M, \bot) \ntrianglelefteq (\nu, M-1, 0)$.

Fix an $M$ as in the previous paragraph. We now claim that $\pi_\alpha (\nu, M, \bot) \in C$. Since $M > N$, $(\nu, M, \bot) \in C$, by the way we picked $N$. Moreover, $C \cap K_\alpha$ is cofinal in $(K_\alpha, \trianglelefteq)$. Thus, we can fix $p' \in C \cap K_\alpha$ such that $\pi_\alpha (\nu, M, \bot) \trianglelefteq p'$. By Lemma~\ref{lemma:meetproj}, $p' \wedge (\nu, M, \bot) = p' \wedge \pi_\alpha (\nu, M, \bot)$. By the way we chose $p'$, we conclude $p' \wedge (\nu, M, \bot) = \pi_\alpha (\nu, M, \bot)$. Since $C$ is a meet-subsemilattice, we conclude that $p' \wedge (\nu, M, \bot) \in C$. Therefore, $\pi_\alpha (\nu, M, \bot) \in C$.

We claim that $(\nu, M, \bot)$ has more than two lower covers in $C$, contradicting our assumptions on $C$. Indeed, note that both $(\nu, M-1, 0)$ and  $(\nu, M-1, 1)$ belong to $C$, since $M > N$. So $(\nu, M-1, 0)$ and $(\nu, M-1, 1)$ are two lower covers of $(\nu, M, \bot)$ in $C$. Moreover, $\pi_\alpha (\nu, M, \bot) \ntrianglelefteq (\nu, M-1, 0),(\nu, M-1, 1)$. But we showed in the previous paragraph that $\pi_\alpha (\nu, M, \bot)$ belongs to $C$. Hence, $(\nu, M, \bot)$ has more than two lower covers in $C$.
\end{proof}

Along with $\trianglelefteq$, we also define a Suslin tree $T$ and a map $\Gamma : T \rightarrow \mathcal{P}(K)$ that satisfies the following properties: for every $\alpha < \omega_1$ and $x \in T$,
\begin{enumerate}
\setcounter{enumi}{5}
\item $T(\alpha) = \{\omega\alpha+n : n\in\omega\}$,
\item if $\alpha = \omega\alpha$ and $A_\alpha$ is a maximal antichain of $(\alpha, \le_T)$, then $A_\alpha$ is also maximal in $(\alpha + \omega, \le_T)$,
\item $\Gamma(x) \subseteq K_{\mathrm{ht}(x)+1}$,
\item for all $z \in \Gamma(x)$ with $\alpha(z) > 0$,  $\pi_{\alpha(z)}(z) \in \Gamma(x)$,
\item if $\alpha \le \mathrm{ht}(x)$, then there is an $n\in\omega$ such that for all $m \ge n$, $(\alpha, m, \bot) \in \Gamma(x)$,
\item if $\alpha \le \mathrm{ht}(x)$, then for every $n\in\omega$ at most one between $(\alpha, n, 0)$ and $(\alpha, n, 1)$ belong to $\Gamma(x)$,
\item for all $y >_T x$, $\Gamma(y) \supseteq \Gamma(x)$.
\item if $\alpha$ is a successor ordinal and $x \in \omega\alpha$, then there exists an $n \in\omega$ such that for every $m \ge n$ and for every $i \in \{0,1\}$ there is a $y \in T(\alpha)$ with $x \le_T y$ and $(\alpha, m, i) \in \Gamma(y)$.
\end{enumerate}

Now let us assume, on top of $(K, \trianglelefteq)$ satisfying (1)-(5), that the tree $T = (\omega_1, \le_T)$ and the map $\Gamma : T \rightarrow \mathcal{P}(K)$ satisfy (6)-(13). Note that, by (11) and (13), the tree $T$ is ever-branching \cite[Definition 7.3]{MR756630}. This last fact, together with property (7), directly implies that $T$ is a Suslin tree \cite[Lemma 7.7]{MR756630}. We now claim that the maximal $3$-ladder $(K, \trianglelefteq)$ is $T$-destructible. 

\begin{claim}
$(K, \trianglelefteq)$ is $T$-destructible.
\end{claim}
\begin{proof}

We first claim that it suffices to show that, for every $x \in T$, $\Gamma(x)$ is a cofinal meet-subsemilattice of $(K_{\mathrm{ht}(x)+1}, \trianglelefteq\nobreak)$ and it is a $2$-ladder in its induced ordering. Indeed, this together with (12) would imply that for any (generic) cofinal branch $B \subseteq T$, the set $\bigcup_{x \in B} \Gamma(x)$ is a cofinal meet-subsemilattice of $(K, \trianglelefteq)$ and it is a $2$-ladder in the induced ordering. By Theorem~\ref{thm:characterization}, the latter fact implies, in particular, that $(K, \trianglelefteq)$ is not maximal in every $T$-generic extension.

So let us show that for every $x \in T$, $\Gamma(x)$ is a cofinal meet-subsemilattice of $(K_{\mathrm{ht}(x)+1}, \trianglelefteq\nobreak)$ and it is a $2$-ladder.
By (8), $\Gamma(x) \subseteq K_{\mathrm{ht}(x)+1}$. For every $(\alpha, n, b) \in K_{\mathrm{ht}(x)+1}$, we know by (10) that there exists $m > n$ such that $(\alpha, m, \bot) \in \Gamma(x)$. Since by (3) $(\alpha, n, b) \trianglelefteq (\alpha, m, \bot)$, we conclude that $\Gamma(x)$ is indeed cofinal in $(K_{\mathrm{ht}(x)+1}, \trianglelefteq)$.

Let us prove that every element of $\Gamma(x)$ has at most $2$ lower covers in $\Gamma(x)$. Pick $z \in \Gamma(x)$. By (11), the set $\Gamma (x) \cap (\{\alpha(z)\} \times D)$ is a chain. Let $z^-$ be the $\trianglelefteq$-greatest element of $\{z' \in \Gamma (x) : z' \lhd z \text{ and } \alpha(z') = \alpha(z)\}$, if such a set is nonempty. Thus, the lower covers of $z$ are $z^-$ (if it is defined) and $\pi_{\alpha(z)}(z)$ (if $\pi_{\alpha(z)}(z) \ntrianglelefteq z^-$), which belongs to $\Gamma (x)$ by (9).

To see that $\Gamma (x)$ is a meet-subsemilattice, pick $p, q \in \Gamma (x)$, with $\alpha(p) \le \alpha(q)$, towards showing that $p \wedge q \in \Gamma (x)$. Let us denote $\alpha(p \wedge q)$ by $\gamma$. By Lemma~\ref{lemma:meetproj}, $p \wedge q = \pi_{\gamma+1}(p) \wedge \pi_{\gamma+1}(q)$. Moreover, by Lemma~\ref{lemma:concproj} and repeated applications of (9), both $\pi_{\gamma+1}(p)$ and $\pi_{\gamma+1}(q)$ belong to $\Gamma (x)$. We have already observed in the previous paragraph that, by (11),  $\pi_{\gamma+1}(p)$ and $\pi_{\gamma+1}(q)$ are $\trianglelefteq$-comparable. Thus, we conclude that $p \wedge q \in \{\pi_{\gamma+1}(p),\pi_{\gamma+1}(q)\} \subset \Gamma(x)$. In particular, $p \wedge q \in \Gamma(x)$.
\end{proof}

It remains to define the partial order $\trianglelefteq$ on $K$, a tree $T = (\omega_1, \le_T)$ and a map $\Gamma : T \rightarrow \mathcal{P}(K)$ satisfying (1)-(13).

We define ${\trianglelefteq} \upharpoonright K_{\alpha}, {\le_T} \upharpoonright (\omega\alpha)$ and $\Gamma \upharpoonright (\omega\alpha)$ simultaneously by induction on $\alpha$. 

If $\alpha$ is either $0$ or is a limit ordinal, there is nothing to do. So let us assume that $\trianglelefteq$ has been defined on $K_\alpha$, and that both $\le_T$ and $\Gamma$ have been defined on $\omega \alpha$, and that the three of them satisfy (1)-(13)---i.e., we assume that ${\trianglelefteq} \upharpoonright K_\alpha, {\le_T} \upharpoonright (\omega\alpha)$ and $\Gamma \upharpoonright (\omega\alpha)$ satisfy properties (1)-(13) with the universal quantifiers on countable ordinals and on elements of $T$ bounded by $\alpha$ and $\omega\alpha$, respectively. We now describe how to extend $\trianglelefteq$ on $K_{\alpha+1}$ and $\le_T$, $\Gamma$ on $\omega(\alpha+1)$.

There are two cases to consider: $\alpha$ successor, and $\alpha$ limit. In both cases, we will define an increasing sequence $(p_n)_{n\in\omega}$ of elements of $K_\alpha$ such that $\{p_n : n \in\omega\}$ is cofinal in $(K_\alpha, \trianglelefteq)$. The sequence $(p_n)_{n\in\omega}$ induces the following extension of $\trianglelefteq$ on $K_{\alpha+1}$:
\begin{itemize}
\item for every $(n, b), (m,c) \in D$, \[(\alpha, n, b) \trianglelefteq (\alpha, m, c) \iff (n, b) \preceq (m, c),\] 
\item for every $p \in K_\alpha$ and $(n, b) \in D$, $p \trianglelefteq (\alpha, n, b)$ if and only if $p \trianglelefteq p_n$.
\end{itemize}
The extension of $\le_T$ and $\Gamma$, on the other hand, are more sensitive to the two cases.\vspace{0.5em}

\underline{$\alpha$ is a successor ordinal}: First let $p_n = (\alpha-1, n, \bot)$ for each $n$. The $p_n$s defined in this way are clearly cofinal in $(K_\alpha, \trianglelefteq)$. Then, fix a bijection $\langle \cdot, \cdot \rangle: \omega \times \omega \rightarrow \omega$. 

For each $n,m\in\omega$, let $(\omega(\alpha-1)+n) \le_T (\omega\alpha+\langle n,m \rangle)$. Moreover, for each $n \in \omega$ pick $h_n \in \omega$ such that for every $h \ge h_n, (\alpha-1, h, \bot) \in \Gamma(\omega(\alpha-1) + n)$---such an $h_n$ exists because by induction hypothesis $\Gamma \upharpoonright \alpha$ satisfies (10). Fix a map $f: \omega \times \omega \rightarrow \omega \times \{0,1\}$ such that $f``(\{n\} \times \omega) = \{(h, i) \mid h \ge h_n \text{ and } i \in \{0,1\}\}$ for every $n$. 
Then, for each $n,m \in \omega$, let
\begin{equation}\label{eq:main4:defgammasucc}
\Gamma(\omega\alpha + \langle n, m \rangle) \coloneqq \Gamma (\omega(\alpha-1) + n ) \cup \big\{(\alpha, h, \bot) : h \ge h_n\big\} \cup \{(\alpha, f(n,m))\}.
\end{equation}

\vspace{0.5em}
\underline{$\alpha$ is a limit ordinal}: Assume that the hypotheses of (5) and (7) are both satisfied---otherwise the construction is simpler, as at least one of these two properties would vacuously hold.

Fix an enumeration $(x_n)_{n\in\omega}$ of $\alpha$ and an enumeration $(q_n)_{n\in\omega}$ of $K_\alpha$. Fix also an increasing sequence $(\alpha_n)_{n\in\omega}$ of successor ordinals which is cofinal in $\alpha$ such that, for each $n$, $\alpha(q_n) \le \alpha_n$ and there exists $z \in A_\alpha$ with $x_n$ and  $z$ comparable (with respect to $\le_T$) and $\max\{\mathrm{ht}(z), \mathrm{ht}(x_n)\} < \alpha_n$. Moreover, we require that if there is a (unique, by Claim~\ref{claim:5}) $\beta < \alpha$ such that $K_\beta \setminus \psi`` A_\alpha$ is not cofinal in $(K_\beta, \trianglelefteq)$, then $\beta < \alpha_0$.

We define the following objects: a sequence $(k_n)_{n\in\omega}$ of natural numbers; a sequence $(b_n)_{n\in\omega}$ of elements of $\{0,1\}$; a $\le_T$-chain $B(x_n) \subseteq \alpha$ for each $n\in\omega$ such that $x_n \in B(x_n)$ and $B(x_n)$ intersects every $T(\beta)$ with $\beta < \alpha$. Then, we are going to set $p_n \coloneqq (\alpha_n, k_n, b_n)$ for each $n$ and set, for each $x \in \alpha$, $x \le_T \alpha+n$ if and only if $x \in B(x_n)$.

In order to define the chains $B(x_n)$, we construct a sequence $\langle y^n_m : n,m\in\omega\rangle$ such that, for all $n,m \in \omega$:
\begin{enumerate}[label = (\alph*)]
\item $x_n \le_T y^n_0$,
\item there is $z \in A_\alpha$ such that $z \le_T y_0^n$,
\item $y^n_m \le_T y^n_{m+1}$,
\item $\mathrm{ht}(y^n_m) = \alpha_{n+m}$.
\end{enumerate}

Once we have defined this sequence, we set, for each $n \in \omega$, 
\begin{equation}\label{eq:main4:defbranch}
B(x_n) \coloneqq \{y \in \alpha : \exists m \ (y \le_T y^n_m)\}.
\end{equation}

Suppose that we defined $k_m$, $b_m$ and $y^m_{n-m}$ for all $m < n$, towards defining $k_{n}$, $b_{n}$ and $y^m_{n-m}$ for all $m \le n$. 

First, by the properties of our sequence $(\alpha_n)_{n\in\omega}$, there is a $y \in T$ such that $\mathrm{ht}(y) = \alpha_{n}$ and $x_{n}, z \le_T y$ for some $z \in A_\alpha$. Let $y_0^{n}$ be such a $y$. Suppose $n =0$. In this case, we just need to define $k_0$ and $b_0$. By the way we chose $\alpha_0$ (and by Claim~\ref{claim:5}), the set $K_{\alpha_{0}+1} \setminus \psi`` A_\alpha$ is cofinal in $(K_{\alpha_{0}+1}, \trianglelefteq)$. In particular, it is nonempty, and  we can pick $k_{0}$ and $b_{0}$ such that $(\alpha_{0}, k_{0}, b_{0}) \not\in \psi`` A_\alpha$.

Now suppose $n > 0$. Since  $(\alpha, \le_T)$ and $\Gamma \upharpoonright \alpha$ satisfy (13) by induction hypothesis, for every $m < n$ we can pick $h_m \in \omega$ such that for every $h \ge h_m$, for every $i \in \{0,1\}$, there is some $y \in T(\alpha_n)$ with $y_{n-m-1}^m \le_T y$ and $(\alpha_{n}, h, i) \in \Gamma (y)$. Let $\bar{h} \coloneqq \max_{m < n} h_m$. 

By the way we chose $\alpha_0$ (and by Claim~\ref{claim:5}), and since $\alpha_{n} > \alpha_0$, the set $K_{\alpha_{n}+1} \setminus \psi`` A_\alpha$ is cofinal in $(K_{\alpha_{n}+1}, \trianglelefteq)$. In particular,  we can pick $k_{n} \in \omega$ with $k_{n} > \bar{h}$ and $b_{n} \in \{0, 1\}$ such that $(\alpha_{n}, k_{n}, b_{n}) \not\in \psi`` A_\alpha$ and $q_{n-1}, (\alpha_{n-1}, k_{n-1}, b_{n-1}) \trianglelefteq (\alpha_{n}, k_{n}, b_{n})$.

By the way we defined $\bar{h}$, and since $k_{n} > \bar{h}$, we can pick, for each $m < n$, $y_{n-m}^m \in T(\alpha_n)$ such that $y_{n-m-1}^m \le_T  y_{n-m}^m$ and $(\alpha_{n}, k_{n}, b_{n}) \in \Gamma(y_{n-m}^m)$. 

This ends the definition of the sequences $\langle y^n_m : m,n\in\omega\rangle$, $(k_n)_{n\in\omega}$, and $(b_n)_{n\in\omega}$. Recall that we set $p_n \coloneqq (\alpha_n, k_n, b_n)$ for each $n$. Moreover, recall that for each $x \in \alpha$ we set $x \le_T \alpha + n$ if and only if $x \in B(x_n)$. We are left to define the extension of $\Gamma$ on $\omega(\alpha+1)$. For each $n$, let
\begin{equation}\label{eq:main4:defgammalimit}
\Gamma (\alpha+ n) \coloneqq \{(\alpha, m, \bot) : m \ge n\} \cup \bigcup_{m\in\omega} \Gamma (y_m^n). 
\end{equation}

This ends the definition of ${\trianglelefteq} \upharpoonright K_{\alpha+1}$, ${\le_T} \upharpoonright \omega(\alpha+1)$ and $\Gamma \upharpoonright \omega(\alpha+1)$.  We claim that these extensions still satisfy (1)-(13).

\begin{claim}
$(K_{\alpha+1}, \trianglelefteq)$ satisfies {\upshape (1)-(5)}.
\end{claim}
\begin{proof}
 First of all, properties (3) and (4) are trivially satisfied by construction. We now claim that $(K_{\alpha+1}, \trianglelefteq)$ is lower finite: for every $(n, b) \in D$, 
\begin{equation}\label{eq:2}
{\trianglelefteq} \downarrow (\alpha, n, b) = ({\trianglelefteq} \downarrow p_n) \cup \{(\alpha, m, c) : (m, c) \preceq (n, b)\}.
\end{equation}
Since we are assuming $(K_\alpha, \trianglelefteq)$ to be lower finite, ${\trianglelefteq} \downarrow p_n$ is finite. Since also $(D, \preceq)$ is lower finite, we conclude that the set in \eqref{eq:2} is finite. 

Now let us show that $(K_{\alpha+1}, \trianglelefteq)$ is a poset. Clearly, $\trianglelefteq$ is reflexive, since $\preceq$ is. Moreover, it follows quickly from the transitivity of $\trianglelefteq$ on $K_\alpha$ and from the increasingness of the $p_n$s that $\trianglelefteq$ is also transitive on $K_{\alpha+1}$. 

We now prove that $(K_{\alpha+1}, \trianglelefteq)$ is also a join-semilattice. Fix $p,q \in K_{\alpha+1}$, towards showing that they have a $\trianglelefteq$-least upper bound. The only non-trivial case to consider is when $p \in K_\alpha$, $q \in K_{\alpha+1} \setminus K_\alpha$ and $p \ntrianglelefteq q$. Let $(n, b) \in D$ be such that $q = (\alpha, n, b)$. Since we are assuming $p \ntrianglelefteq q$, we have $p \ntrianglelefteq p_n$, by definition of the extension of $\trianglelefteq$. Let $m = \min\{k \in \omega : p \trianglelefteq p_k\}$---the latter set is nonempty because the $p_n$s are cofinal in $(K_\alpha, \trianglelefteq)$. Clearly, $m > n$. Then, the $\trianglelefteq$-least upper bound of $p$ and $q$ is easily seen to be $(\alpha, m, \bot)$. Moreover, (2) holds, i.e., $K_\alpha$ is an ideal of $(K_{\alpha+1}, \trianglelefteq)$.

Since we have proven $(K_{\alpha+1}, \trianglelefteq)$ to be a lower finite join-semilattice with a least element (namely $(0, 0, \bot)$), we conclude that $(K_{\alpha+1}, \trianglelefteq)$ is a (lower finite) lattice, and therefore also (1) is satisfied. 

Finally, let us show that (5) is satisfied. If the hypotheses of (5) are not satisfied, then (5) vacuously holds. So suppose that the hypotheses of (5) are satisfied. By definition of the extension of $\trianglelefteq$, $\pi_\alpha (\alpha, n, \bot) = p_n$ for every $n\in\omega$. It directly follows from the inductive construction of the $p_n$s in the case where $\alpha$ is a limit ordinal that $p_n \not\in \psi`` A_\alpha$. Therefore, $\pi_\alpha(\alpha, n, \bot) \not\in \psi`` A_\alpha$ for every $n\in\omega$ and (5) is satisfied.
\end{proof}

\begin{claim}
${\le_T} \upharpoonright \omega(\alpha+1)$ satisfies {\upshape (6)} and {\upshape (7)}.
\end{claim}
\begin{proof}
If $\alpha$ is a successor ordinal, then (7) holds vacuously and (6) holds directly from the definition of ${\le_T} \upharpoonright \omega(\alpha+1)$ . 

If $\alpha$ is limit, then (6) holds because, by property (d) of $y^n_m$, the branches $B(x_n)$ we have defined \eqref{eq:main4:defbranch} intersect every $T(\beta)$ with $\beta < \alpha$. Moreover, if the hypotheses of (7) hold, then (7) is guaranteed by the way we chose $y^n_0$ for each $n$. Indeed, for each $n$ there is an element $z \in A_\alpha$ such that $z \le_T y^n_0$. In particular $A_\alpha$ is still maximal in $(\alpha+\omega, \le_T)$.
\end{proof}

\begin{claim}
$\Gamma \upharpoonright \omega(\alpha+1)$ satisfies {\upshape (8)-(13)}.
\end{claim}
\begin{proof}
Suppose that $\alpha$ is successor. Properties (8), (10)-(12) directly follow from \eqref{eq:main4:defgammasucc} and from $\Gamma \upharpoonright \omega\alpha$ satisfying (8), (10)-(12) by induction hypothesis. Property (13) also holds. Indeed, for each $n\in\omega$, $i \in \{0,1\}$ and every $h \ge h_n$ there is an $m$ such that $f(n,m) = (h, i)$ and $\omega (\alpha-1)+n \le_T \omega \alpha + \langle n, m\rangle$ and $(\alpha, h, i) \in \Gamma (\omega\alpha + \langle n, m\rangle)$. 

As for property (9), it suffices to prove that for every $x \in T(\alpha)$ and every $z \in \Gamma(x) \cap K_{\alpha+1}$ with $\alpha(z)  = \alpha$, $\pi_\alpha (z) \in \Gamma(x)$. Let $n,m$ be such that $x = \omega\alpha + \langle n, m\rangle$. By \eqref{eq:main4:defgammasucc}, $z$ is either $(\alpha, h ,\bot)$ for some $h \ge h_n$ or it is $(\alpha, f(n,m))$. In the first case, we have $\pi_\alpha (\alpha, h, \bot) = p_h = (\alpha-1, h, \bot)$. By the way we picked $h_n$, this means that $\pi_\alpha(\alpha, h, \bot) \in \Gamma (\omega(\alpha-1) + n)$. But by \eqref{eq:main4:defgammasucc} $\Gamma (\omega(\alpha-1) + n) \subseteq \Gamma (\omega(\alpha) + \langle n, m\rangle)$. Thus, $\pi_\alpha (z) \in \Gamma(x)$. On the other hand, if $z = (\alpha, f(n,m))$, then $z = (\alpha, h, i)$ for some $h \ge h_n$. Reasoning as before, we have $\pi_\alpha (z)  = p_h \in \Gamma(x)$.

Now suppose that $\alpha$ is limit. Property (13) trivially holds, since we are assuming $\Gamma \upharpoonright \omega\alpha$ to satisfy (13) by induction hypothesis. Properties (8), (10)-(12) directly follow from \eqref{eq:main4:defgammalimit} and from $\Gamma \upharpoonright \omega \alpha$ satisfying (8), (10)-(12) by induction hypothesis. To show that (9) holds, it suffices to show that for every $x \in T(\alpha)$, and every $z \in \Gamma(x)$ with $\alpha(z) = \alpha$, $\pi_\alpha (z) \in \Gamma(x)$. Let $x = \alpha + n$. By \eqref{eq:main4:defgammalimit}, $z = (\alpha, m, \bot)$ for some $m \ge n$. It follows directly from the definition of the extension of $\trianglelefteq$ on $K_{\alpha+1}$, that $\pi_\alpha(\alpha, m, \bot) = p_m$. Moreover, by construction we have $p_m \in \Gamma (y^n_{m-n})$. Therefore, by \eqref{eq:main4:defgammalimit}, $p_m \in \Gamma (\alpha + n) = \Gamma (x)$. So also (9) holds.
\end{proof}
This finishes the inductive definition of $\trianglelefteq$, $\le_T$ and $\Gamma$.
\end{proof}

\section{Open questions}\label{sec:openquestions}
\subsection{Ditor's problem}
The main question that remains open is the following \cite{MR2768581, MR2926318}:
\begin{question}\label{q:1}
Is the existence of a $3$-ladder of cardinality $\aleph_2$ a theorem of $\mathsf{ZFC}$?
\end{question}
We expect this question to have a negative answer, assuming the consistency of large cardinals. The large cardinal assumption is necessary since the existence of a $3$-ladder of cardinality $\aleph_2$ follows from $\square_{\omega_1}$ \cite{MR4993406}.

We argued in the final paragraph of Section~\ref{sec:cohen} that $\mathsf{MA}_{\omega_1}(\text{Add}(\omega, \omega_1))$ implies the existence of a $3$-ladder of cardinality $\aleph_2$. In fact, it implies that every maximal $3$-ladder must have cardinality $\aleph_2$. It remains open whether $\mathsf{cov}(\mathcal{B}) > \aleph_1$ suffices. Notice that $\mathrm{cov}(\mathcal{B}) > \aleph_1$ is equivalent to $\mathsf{MA}_{\omega_1}(\text{Add}(\omega, 1))$ \cite[Theorem 7.13]{MR2768685}.

\begin{question}
Does $\mathrm{cov}(\mathcal{B}) > \aleph_1$ imply the existence of a $3$-ladder of cardinality $\aleph_2$?
\end{question}

The next two open questions are motivated by the following observation: if every maximal $3$-ladder is indestructible by a $\sigma$-closed forcing, then it follows from standard arguments that collapsing a Mahlo cardinal $\kappa$ via $\text{Col}(\omega_1, {<}\kappa)$ would result in a model in which there are no $3$-ladders of cardinality $\aleph_2$, thus answering Question~\ref{q:1} in the negative.

Similarly, if every maximal $3$-ladder is indestructible by a countable support iteration of Sacks forcing $\mathbb{S}$ \cite{MR556894}, then the iteration of $\kappa$-many Sacks forcings, where $\kappa$ is Mahlo, would again result in a model of $\mathsf{ZFC}$ in which there are no $3$-ladders of cardinality $\aleph_2$.

\begin{question}\label{q:3}
Is every maximal $3$-ladder indestructible by $\sigma$-closed forcings?
\end{question}
\begin{question}\label{q:4}
Is every maximal $3$-ladder $\mathbb{S}$-indestructible?
\end{question}

Another question related to Question~\ref{q:3} is the following:
\begin{question}\label{q:5}
Does $\mathsf{CH}$ imply the existence of a $3$-ladder of cardinality $\aleph_2$?
\end{question}
Indeed, a positive answer to Question~\ref{q:3} would entail, by the argument sketched above, that Question~\ref{q:5} has a negative answer (assuming the consistency of a Mahlo cardinal).

\subsection{Maximal $3$-ladders of breadth $2$}
Theorem~\ref{thm:main2} implies, in particular, that the existence of a maximal $3$-ladder of cardinality $\aleph_1$ follows from $\mathsf{CH}$. But the maximal $3$-ladder constructed in the proof of Theorem~\ref{thm:main2} has breadth $3$. On the other hand, Theorem~\ref{thm:main3} shows that, assuming $\clubsuit$, we can construct a maximal $3$-ladder of breadth $2$. In other words, Theorem~\ref{thm:main3} provides a maximal $3$-ladder of cardinality $\aleph_1$ which is ``tamer" than the one provided by Theorem~\ref{thm:main2}.

The natural question is whether Theorem~\ref{thm:main2} can be improved by showing that  $\mathsf{CH}$ suffices to prove the existence of a maximal $3$-ladder of breadth $2$.

\begin{question}
Does $\mathsf{CH}$ imply the existence of a maximal $3$-ladder of breadth $2$?
\end{question}

\subsection{Spectra of maximality}
For each positive integer $n > 0$,  the \emph{spectrum} of maximal $n$-ladders  (in symbols, $\mathrm{lsp}_n$) is the set of cardinalities of maximal $n$-ladders, that is
\[
\mathrm{lsp}_n \coloneqq \big\{|L| : L \text{ is a maximal }n\text{-ladder}\big\}.
\]
These spectra encode a sizable portion of the set-theoretic behavior of $n$-ladders. By Ditor's Theorems~\ref{thm:Ditor} and \ref{thm:Ditormax}, we know that for every $n > 1$,
\[
\mathrm{lsp}_n \subseteq \{\aleph_1, \aleph_2, \dots, \aleph_{n-1}\}.
\]
We can also recast Corollary~\ref{cor:main1} and Theorem~\ref{thm:main2} as follows:
\begin{itemize}
\item $\text{Add}(\omega, \omega_\omega)$ forces $\mathrm{lsp}_n = \{\aleph_{n-1}\}$ for every $n$ (Corollary~\ref{cor:main1}).
\item If $\mathfrak{d} = \aleph_1$, then $\aleph_1 \in \mathrm{lsp}_3$ (Theorem~\ref{thm:main2}).
\end{itemize}

Moreover, once one observes that every $n$-ladder extends to a maximal $n$-ladder, the existence of an $n$-ladder of cardinality $\aleph_k$ is equivalent to $\max \mathrm{lsp}_n \ge \aleph_k$. As a direct consequence, the sequence $\langle \max \mathrm{lsp}_n : n >  0\rangle$ is non-decreasing. 

It is natural to investigate the range of possible consistent behaviors of these spectra.
Are there structural---that is, provable in $\mathsf{ZFC}$---constraints on their mutual arrangements? For example:

\begin{question}
If there is a $4$-ladder of cardinality $\aleph_2$, does it follow that there is also a $3$-ladder of cardinality $\aleph_2$?
\end{question}
\begin{question}
More generally, for a given $m > 2$, is it true that either $\aleph_{m-1} \in \mathrm{lsp}_m$ or $\max \mathrm{lsp}_n < \aleph_{m-1}$ for every $n \ge m$?
\end{question}
\begin{question}
Is it always true that $\mathrm{lsp}_{n} \cap \{\aleph_1, \dots, \aleph_{m-1}\} \subseteq \mathrm{lsp}_m$ holds for every $n > m > 1$?
\end{question}
\begin{question}
Is it consistent that $\mathrm{lsp}_n = \{\aleph_1, \aleph_2, \dots, \aleph_{n-1}\}$ for every $n > 1$?
\end{question}
\begin{question}\label{q:9}
Assuming the consistency of large cardinals, is it consistent that $\mathrm{lsp}_n = \{\aleph_1\}$ for every $n > 1$?
\end{question}

Note that a positive answer to Question~\ref{q:9} would entail a strong negative answer to Ditor's Problem: for every $n > 2$, there are no $n$-ladders of cardinality $\aleph_2$, let alone $\aleph_{n-1}$.

\printbibliography

@incollection {MR2768691,
    AUTHOR = {Cummings, James},
     TITLE = {Iterated forcing and elementary embeddings},
 BOOKTITLE = {Handbook of set theory. Vols. 1, 2, 3},
     PAGES = {775--883},
 PUBLISHER = {Springer, Dordrecht},
      YEAR = {2010},
      ISBN = {978-1-4020-4843-2}
}

@book {MR795592,
    AUTHOR = {Erd\H{o}s, Paul and Hajnal, Andr\'{a}s and M\'{a}t\'{e}, Attila and
              Rado, Richard},
     TITLE = {Combinatorial set theory: partition relations for cardinals},
    SERIES = {Studies in Logic and the Foundations of Mathematics},
    VOLUME = {106},
 PUBLISHER = {North-Holland Publishing Co., Amsterdam},
      YEAR = {1984},
     PAGES = {347},
      ISBN = {0-444-86157-2}
}

@article {MR48518,
    AUTHOR = {Kuratowski, Casimir},
     TITLE = {Sur une caract\'{e}risation des alephs},
   JOURNAL = {Fund. Math.},
  FJOURNAL = {Polska Akademia Nauk. Fundamenta Mathematicae},
    VOLUME = {38},
      YEAR = {1951},
     PAGES = {14--17},
      ISSN = {0016-2736, 1730-6329}
}

@book {MR756630,
    AUTHOR = {Kunen, Kenneth},
     TITLE = {Set theory},
    SERIES = {Studies in Logic and the Foundations of Mathematics},
    VOLUME = {102},
 PUBLISHER = {North-Holland Publishing Co., Amsterdam},
      YEAR = {1983},
      ISBN = {0-444-86839-9}
}

@book {MR1623206,
    AUTHOR = {Shelah, Saharon},
     TITLE = {Proper and improper forcing},
    SERIES = {Perspectives in Mathematical Logic},
   EDITION = {Second},
 PUBLISHER = {Springer-Verlag, Berlin},
      YEAR = {1998},
      ISBN = {3-540-51700-6}
}

@incollection {MR1900391,
    AUTHOR = {Hru\v{s}\'{a}k, Michal},
     TITLE = {Life in the Sacks model},
      NOTE = {29th Winter School on Abstract Analysis},
   JOURNAL = {Acta Univ. Carolin. Math. Phys.},
  FJOURNAL = {Acta Universitatis Carolinae. Mathematica et Physica},
    VOLUME = {42},
      YEAR = {2001},
    NUMBER = {2},
     PAGES = {43--58},
      ISSN = {0001-7140}
}

@article {MR454941,
    AUTHOR = {Ostaszewski, Adam J.},
     TITLE = {A perfectly normal countably compact scattered space which is
              not strongly zero-dimensional},
   JOURNAL = {J. London Math. Soc. (2)},
  FJOURNAL = {Journal of the London Mathematical Society. Second Series},
    VOLUME = {14},
      YEAR = {1976},
    NUMBER = {1},
     PAGES = {167--177},
      ISSN = {0024-6107, 1469-7750}
}

@book {MR4917577,
    AUTHOR = {Halbeisen, Lorenz J.},
     TITLE = {Combinatorial set theory---with a gentle introduction to
              forcing},
    SERIES = {Springer Monographs in Mathematics},
   EDITION = {Third},
 PUBLISHER = {Springer, Cham},
      YEAR = {2025},
      ISBN = {978-3-031-91751-6; 978-3-031-91752-3}
}

@incollection {MR2768685,
    AUTHOR = {Blass, Andreas},
     TITLE = {Combinatorial cardinal characteristics of the continuum},
 BOOKTITLE = {Handbook of set theory. {V}ols. 1, 2, 3},
     PAGES = {395--489},
 PUBLISHER = {Springer, Dordrecht},
      YEAR = {2010},
      ISBN = {978-1-4020-4843-2}
}

@article {MR4993406,
    AUTHOR = {Notaro, Lorenzo},
     TITLE = {Ladders and squares},
   JOURNAL = {Adv. Math.},
  FJOURNAL = {Advances in Mathematics},
    VOLUME = {485},
      YEAR = {2026},
     PAGES = {Paper No. 110714, 36},
      ISSN = {0001-8708,1090-2082}
}

@article {MR2309879,
    AUTHOR = {R\r{u}\v{z}i\v{c}ka, Pavel and T\r{u}ma, Ji\v{r}\'{i} and Wehrung, Friedrich},
     TITLE = {Distributive congruence lattices of congruence-permutable
              algebras},
   JOURNAL = {J. Algebra},
  FJOURNAL = {Journal of Algebra},
    VOLUME = {311},
      YEAR = {2007},
    NUMBER = {1},
     PAGES = {96--116},
      ISSN = {0021-8693}
}

@article {MR1800815,
    AUTHOR = {Wehrung, Friedrich},
     TITLE = {Representation of algebraic distributive lattices with
              {$\aleph_1$} compact elements as ideal lattices of regular
              rings},
   JOURNAL = {Publ. Mat.},
  FJOURNAL = {Publicacions Matem\`atiques},
    VOLUME = {44},
      YEAR = {2000},
    NUMBER = {2},
     PAGES = {419--435},
      ISSN = {0214-1493}
}

@article {MR1768850,
    AUTHOR = {Gr\"{a}tzer, George and Lakser, Harry and Wehrung, Friedrich},
     TITLE = {Congruence amalgamation of lattices},
   JOURNAL = {Acta Sci. Math. (Szeged)},
  FJOURNAL = {Acta Universitatis Szegediensis. Acta Scientiarum
              Mathematicarum},
    VOLUME = {66},
      YEAR = {2000},
    NUMBER = {1-2},
     PAGES = {3--22},
      ISSN = {0001-6969}
}

@book {MR2768581,
    AUTHOR = {Gr{\"{a}}tzer, George},
     TITLE = {Lattice theory: foundation},
 PUBLISHER = {Birkh\"{a}user/Springer Basel AG, Basel},
      YEAR = {2011},
      ISBN = {978-3-0348-0017-4}
}

@article {MR0732199,
    AUTHOR = {Ditor, Seymour Z.},
     TITLE = {Cardinality questions concerning semilattices of finite
              breadth},
   JOURNAL = {Discrete Math.},
  FJOURNAL = {Discrete Mathematics},
    VOLUME = {48},
      YEAR = {1984},
    NUMBER = {1},
     PAGES = {47--59},
      ISSN = {0012-365X}
}

@article {MR2609217,
    AUTHOR = {Wehrung, Friedrich},
     TITLE = {Large semilattices of breadth three},
   JOURNAL = {Fund. Math.},
  FJOURNAL = {Fundamenta Mathematicae},
    VOLUME = {208},
      YEAR = {2010},
    NUMBER = {1},
     PAGES = {1--21},
      ISSN = {0016-2736}
}

@book{MR1940513,
	author = {Jech, Thomas},
	isbn = {3-540-44085-2},
	note = {The third millennium edition, revised and expanded},
	publisher = {Springer-Verlag},
	series = {Springer Monographs in Mathematics},
	title = {Set theory},
	year = {2003}
}

@article {MR862871,
    AUTHOR = {Dobbertin, Hans},
     TITLE = {Vaught measures and their applications in lattice theory},
   JOURNAL = {J. Pure Appl. Algebra},
  FJOURNAL = {Journal of Pure and Applied Algebra},
    VOLUME = {43},
      YEAR = {1986},
    NUMBER = {1},
     PAGES = {27--51},
      ISSN = {0022-4049}
}

@article {MR2926318,
    AUTHOR = {Wehrung, Friedrich},
     TITLE = {Infinite combinatorial issues raised by lifting problems in
              universal algebra},
   JOURNAL = {Order},
  FJOURNAL = {Order. A Journal on the Theory of Ordered Sets and its
              Applications},
    VOLUME = {29},
      YEAR = {2012},
    NUMBER = {2},
     PAGES = {381--404},
      ISSN = {0167-8094}
}

@article {MR556894,
    AUTHOR = {Baumgartner, James E. and Laver, Richard},
     TITLE = {Iterated perfect-set forcing},
   JOURNAL = {Ann. Math. Logic},
  FJOURNAL = {Annals of Mathematical Logic},
    VOLUME = {17},
      YEAR = {1979},
    NUMBER = {3},
     PAGES = {271--288},
      ISSN = {0003-4843}
}

\end{document}